\newtheorem{thm}{Theorem}[section]
\newtheorem{coro}[thm]{Corollary}
\newtheorem{conj}[thm]{Conjecture}
\newtheorem{lema}[thm]{Lemma}
\newtheorem{propo}[thm]{Proposition}
\newtheorem{remark}[thm]{Remark}
\numberwithin{equation}{section}
\renewcommand{\a}{\alpha}
\renewcommand{\b}{\beta}
\newcommand{\RR}{\mathbb{R}}
\author[\'O.\ Ciaurri]{\'Oscar Ciaurri}
\address[\'O.\ Ciaurri]{Departamento de Matem\'aticas y Computaci\'on,
         Universidad de La Rioja,
         26004 Logro\~no, Spain}
\email{oscar.ciaurri@unirioja.es}
\author[A.\ Nowak]{Adam Nowak}
\address[A.\ Nowak]{Institute of Mathematics,
Polish Academy of Sciences,
\'Sniadeckich 8,
00-656 Warszawa, Poland}
\email{adam.nowak@impan.pl}
\author[L.\ Roncal]{Luz Roncal}
\address[L.\ Roncal]{BCAM - Basque Center for Applied Mathematics,
48009 Bilbao, Spain and
Ikerbasque, Basque Foundation for Science, 48011 Bilbao, Spain}
\email{lroncal@bcamath.org}
\thanks{The first-named author was supported by the grant PGC2018-096504-B-C32 from Spanish Government.
The second-named author was supported by the National Science Centre of Poland within the research
project OPUS 2017/27/B/ST1/01623. The third-named author was supported by the Basque Government through
BERC 2018--2021 program, by Spanish Ministry of Economy and Competitiveness MINECO through BCAM Severo Ochoa
excellence accreditation SEV-2017-2018 and through the project MTM2017-82160-C2-1-P funded by (AEI/FEDER, UE)
and acronym ``HAQMEC'', and by 2017 Leonardo grant for Researchers and Cultural Creators, BBVA Foundation.
The Foundation accepts no responsibility for the opinions, statements and contents included in the project
and/or the results thereof, which are entirely the responsibility of the authors.}
\keywords{Spherical Radon transform, spherical mean, maximal operator, radial function,
	weighted estimate, wave equation,
	Euler-Poisson-Darboux equation, axially symmetric solution, convergence to initial data}
\subjclass[2010]{Primary: 44A12; Secondary: 42B37, 35L15, 35B07, 35L05, 35Q05.}
\begin{document}

\title[Maximal estimates for spherical means]{Maximal estimates for \\
	a generalized spherical mean Radon transform \\ acting on radial functions}

\begin{abstract}
We study a generalized spherical means operator,
viz.\ generalized spherical mean Radon transform, acting on radial functions.
As the main results, we find conditions for the associated maximal operator and its local variant
to be bounded on power weighted Lebesgue spaces. This translates, in particular, into almost everywhere convergence
to radial initial data results for solutions to certain Cauchy problems for classical Euler-Poisson-Darboux and wave equations.
Moreover, our results shed some new light on the interesting and important question of optimality
of the yet known $L^p$ boundedness results for the maximal operator in the general non-radial case.
It appears that these could still be notably improved, as indicated by our conjecture of the ultimate
sharp result.
\end{abstract}

\maketitle

\section{Preliminaries and statement of results} \label{sec:prel}

This paper is a natural continuation of our recent research from \cite{CNR}.
We study a generalized spherical means operator acting on radial functions.
In \cite{CNR} we viewed this operator as a family of integral transforms $\{M_t^{\a,\b} : t > 0\}$
acting on profile functions on $\mathbb{R}_+$ and found fairly precise estimates of the associated
integral kernels $K^{\a,\b}_t(x,z)$. This enabled us to prove two-weight $L^p-L^q(L^r_t)$ estimates for
$f \mapsto M^{\a,\b}_t f$, with $1 \le p,q \le \infty$ and $1 \le r < \infty$.
In the present work we focus on the more subtle limiting case $r=\infty$ and restrict to $p=q$,
which in the above language of mixed norm estimates corresponds to weighted $L^p$-boundedness of the maximal operator
$f \mapsto \sup_{t>0} |M^{\a,\b}_t f|$. Obtaining such results requires a different, in fact more tricky, approach
from that used in \cite{CNR}. It is worth emphasizing that both our works, \cite{CNR} and this one, were
to large extent motivated by connections of the generalized spherical means with solutions to a number of
classical initial-value PDE problems being of physical and practical importance; see e.g.\ \cite[Section 7]{CNR}
and references given there.

Let $n \ge 2$ and consider the generalized spherical means transformation
$$
M^{\b}f(x,t) = \mathcal{F}^{-1} \big( m_{\b}(t|\cdot|) \mathcal{F}f \big)(x),
$$
where $\mathcal{F}$ is the Fourier transform in $\mathbb{R}^n$ and the radial multiplier is given via
$$
m_{\b}(s) = 2^{\b+n/2-1} \Gamma(\b+n/2) \frac{J_{\b+n/2-1}(s)}{s^{\b+n/2-1}}, \qquad s > 0,
$$
with $J_{\nu}$ denoting the Bessel function of the first kind and order $\nu$.
The parameter $\b$ can, in general, be a complex number excluding $\b = -n/2, -n/2-1, -n/2-2,\ldots$.
For $\b=0$ one recovers the classical spherical means
$$
M^0 f(x,t) = \int_{S^{n-1}} f(x-ty)\, d\sigma(y), \qquad (x,t) \in \mathbb{R}^n \times \mathbb{R}_+,
$$
where $d\sigma$ is the normalized uniform measure on the unit sphere $S^{n-1} \subset \mathbb{R}^n$.
Clearly, $M^0f(x,t)$ returns the mean value of $f$ on the sphere centered at $x$ and of radius $t$.

For the maximal operator $M^{\b}_*f = \sup_{t >0} |M^{\b}f(\cdot,t)|$ Stein \cite{Stein} proved the following.
\begin{thm}[{\cite{Stein}}] \label{thm:Stein}
Let $n \ge 3$. Then $M^{\b}_*$ is bounded on $L^p(\mathbb{R}^n)$ provided that
\begin{equation} \label{cndthm11}
1 < p \le 2 \;\; \textrm{and} \;\; \b > 1-n+\frac{n}p, \quad \textrm{or} \quad p > 2 \;\; \textrm{and} \;\; \b > \frac{2-n}p.
\end{equation}
\end{thm}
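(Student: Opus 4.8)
The plan is to exploit that $\{M^\b\}$ extends to an analytic family of operators in the complex parameter $\b$ and to combine three anchor estimates by Stein's interpolation theorem. Since $M^\b_*$ is only sublinear, I would first linearize it: fixing an arbitrary measurable $x\mapsto t(x)\in(0,\infty)$ and setting $T^\b f(x)=M^\b f(x,t(x))$, it suffices to bound $T^\b$ on $L^p(\RR^n)$ uniformly in the choice of $t(\cdot)$ (equivalently, to bound the linear map $f\mapsto M^\b f(\cdot,\cdot)$ from $L^p(\RR^n)$ into $L^p(\RR^n;L^\infty(dt))$). For fixed $t(\cdot)$ the family $\b\mapsto T^\b$ is analytic, and of admissible growth in $\mathrm{Im}\,\b$ because the factor $2^{\b+n/2-1}\Gamma(\b+n/2)$ and the Bessel asymptotics grow at most exponentially along vertical lines. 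The three anchors are: a strong $L^2$ bound for $\mathrm{Re}\,\b>1-n/2$; a strong $L^\infty$ bound for $\mathrm{Re}\,\b>0$; and a weak type $(1,1)$ bound for $\mathrm{Re}\,\b\ge 1$. Interpolating the $L^2$ and $L^\infty$ anchors, with $1/p=\theta/2$ and the interpolated threshold $\theta(1-n/2)$, yields the range $p\ge 2$ with $\b>(2-n)/p$; interpolating the $L^2$ and weak $(1,1)$ anchors, with $1/p=1-\theta/2$ and threshold $(1-\theta)+\theta(1-n/2)$, yields $1<p\le 2$ with $\b>1-n+n/p$. These two affine relations reproduce exactly \eqref{cndthm11}.

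For the $L^\infty$ and weak-type anchors I would use the classical ball representation of $M^\b$. For $\mathrm{Re}\,\b>0$ one has $M^\b f(x,t)=c_{n,\b}\int_{|y|<1}(1-|y|^2)^{\b-1}f(x-ty)\,dy$, the normalization $m_\b(0)=1$ forcing the weight to have total mass $1$. Since $(1-|y|^2)^{\mathrm{Re}\,\b-1}$ is integrable on the unit ball precisely when $\mathrm{Re}\,\b>0$, taking moduli gives $\|M^\b f(\cdot,t)\|_\infty\le C_\b\|f\|_\infty$ uniformly in $t$, hence the $L^\infty$ anchor. For $\mathrm{Re}\,\b\ge 1$ the weight $(1-|y|^2)^{\mathrm{Re}\,\b-1}$ is moreover radially nonincreasing and so admits a radially decreasing integrable majorant; the standard comparison then gives $M^\b_* f\le C_\b\,\mathcal M f$ pointwise, where $\mathcal M$ is the Hardy--Littlewood maximal operator, which is of weak type $(1,1)$. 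Tracking the $\b$-dependence of $c_{n,\b}$ confirms admissible growth along the lines $\mathrm{Re}\,\b=0^+$ and $\mathrm{Re}\,\b=1$.

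The heart of the matter is the $L^2$ anchor up to the sharp line $\b>1-n/2$. By $J_{\b+n/2-1}(s)\sim\sqrt{2/(\pi s)}\,\cos(s-\ldots)$ one has $|m_\b(s)|\lesssim s^{-(\mathrm{Re}\,\b+(n-1)/2)}$ for large $s$, the same power surviving each differentiation in $s$ since differentiating the cosine does not change the decay. First I would peel off a smooth low-frequency part: writing $m_\b=\phi+\psi$ with $\phi$ even, smooth, $\equiv 1$ near the origin and compactly supported, the operator with symbol $\phi(t|\xi|)$ is dominated pointwise by $\mathcal M f$ (its kernel has a radially decreasing integrable majorant) and so is bounded on $L^2$. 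For the high-pass part $\psi$, which vanishes near $0$, I would run a frequency-localized square-function argument: decompose into Littlewood--Paley pieces $|\xi|\sim 2^k$, rescale each to unit frequency, and apply a Sobolev trace inequality in $t$ of fractional order $s$. The embedding $H^s_t\hookrightarrow L^\infty_t$ needs only $s>1/2$, and—because the power decay of $m_\b$ is unaffected by $t$-differentiation—the resulting per-scale bound is finite precisely when $\mathrm{Re}\,\b+(n-1)/2>1/2$, that is $\mathrm{Re}\,\b>1-n/2$. A single integer derivative (the naive $g$-function) would instead demand decay rate exceeding $1$, i.e.\ the lossy threshold $(3-n)/2$; the half-derivative gain is exactly what reaches the sharp line. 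Plancherel orthogonality of the pieces then sums the scales, giving $\|M^\b_* f\|_2\lesssim\|f\|_2$ for $\mathrm{Re}\,\b>1-n/2$ with polynomial control in $\mathrm{Im}\,\b$.

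I expect the main obstacle to be precisely this sharp $L^2$ estimate. The competition between the low-frequency plateau of $m_\b$ (where $m_\b\to 1$) and its slowly decaying high-frequency oscillation means the endpoint $\b=1-n/2$ is attained only through the combination of a low/high-pass splitting, a genuinely fractional Sobolev embedding in $t$, and Plancherel orthogonality across frequency scales, rather than by any single global integration by parts. The remaining non-routine point is bookkeeping: one must verify that all three anchor operators depend analytically on $\b$ with at most admissible (exponential-in-$\mathrm{Im}\,\b$) operator-norm growth, so that Stein's analytic interpolation theorem—in the form permitting a weak-type endpoint for the range $1<p<2$—applies to the linearized family and delivers the strong-type conclusions throughout the open region \eqref{cndthm11}. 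The hypothesis $n\ge 3$ enters through $2-n<0$, which makes the $L^\infty$ anchor and hence the full range $p>2$ available and fixes the geometry of both interpolations.
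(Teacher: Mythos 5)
This theorem is quoted from Stein's paper and is not proved in the present work, so there is no internal argument to compare against; your outline is a faithful and essentially correct reconstruction of Stein's original proof (linearization of the maximal operator, the $L^2$ anchor for $\mathrm{Re}\,\b>1-n/2$ via a low/high frequency splitting, Littlewood--Paley decomposition and a half-derivative Sobolev embedding in $t$, the $L^\infty$ and weak $(1,1)$ anchors from the ball representation with weight $(1-|y|^2)^{\b-1}$, and analytic interpolation), and your two interpolation computations do reproduce \eqref{cndthm11} exactly. The only step needing care is the weak-type endpoint in the analytic interpolation for $1<p\le 2$: either invoke a version of the interpolation theorem that admits weak-type hypotheses, or, more simply, first upgrade the $\mathrm{Re}\,\b\ge 1$ anchor to strong type $(p_0,p_0)$ for every $p_0>1$ by Marcinkiewicz interpolation against the $L^\infty$ bound and then let $p_0\to 1$, which yields the same open range $\b>1-n+n/p$.
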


This result was enhanced in the sense of admitted parameters and dimensions by subsequent authors:
Bourgain \cite{Bourgain}, Mockenhoupt, Seeger and Sogge \cite{MSS} and recently by Miao,
Yang and Zheng \cite{MYZ}, see the historical comments in \cite[p.\,4272]{MYZ}.
All these refinements can be stated altogether as follows, cf.\ \cite[Theorem 1.1]{MYZ}.
\begin{thm}[{\cite{MYZ}}] \label{thm:MYZ}
Let $n \ge 2$. Then $M^{\b}_*$ is bounded on $L^p(\mathbb{R}^n)$ provided that
\begin{equation} \label{cndthm12}
2 < p \le \frac{2n+2}{n-1} \;\; \textrm{and} \;\; \b > \frac{1-n}{4}+\frac{3-n}{2p}, \quad \textrm{or} \quad
	p > \frac{2n+2}{n-1} \;\; \textrm{and} \;\; \b > \frac{1-n}p.
\end{equation}
\end{thm}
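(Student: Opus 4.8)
Although Theorem~\ref{thm:MYZ} is quoted from \cite{MYZ}, let me describe the strategy one would follow to establish such $L^p$ bounds for $M^{\beta}_*$. The plan is to reduce the maximal operator to space-time estimates for frequency-localized half-wave propagators and then to feed in sharp local smoothing estimates, whose quality fixes the admissible range of $(p,\beta)$.

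First I would split $m_{\beta}$ into low- and high-frequency parts. Since $m_{\beta}(0)=1$ and $m_{\beta}$ is smooth near the origin, the low-frequency piece is controlled pointwise in $t$ by the Hardy--Littlewood maximal function and hence is bounded on $L^p(\mathbb{R}^n)$ for every $p>1$. For the high-frequency part I would use the asymptotics $J_{\nu}(s)\sim \sqrt{2/(\pi s)}\,\cos(s-\nu\pi/2-\pi/4)$, which reveal that, modulo a bounded smooth factor, $m_{\beta}(s)$ is a combination of $e^{\pm is}a_{\pm}(s)$ with symbols $a_{\pm}$ of order $-\beta-(n-1)/2$. A dyadic decomposition in frequency then expresses the relevant part of $M^{\beta}f(\cdot,t)$ as a sum over $\lambda=2^{j}$ of pieces comparable to $\lambda^{-\beta-(n-1)/2}\,e^{\pm it\sqrt{-\Delta}}P_{\lambda}f$, where $P_{\lambda}$ restricts to $|\xi|\sim\lambda$.

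Next, by scaling it suffices to bound $\sup_{t\in[1,2]}$, and I would linearize the supremum through the Sobolev-type inequality
\[
\sup_{t\in[1,2]}|u(x,t)| \lesssim \|u(x,\cdot)\|_{L^{q}([1,2])} + \|u(x,\cdot)\|_{L^{q}([1,2])}^{1-1/q}\,\|\partial_{t}u(x,\cdot)\|_{L^{q}([1,2])}^{1/q},
\]
applied to $u=e^{\pm it\sqrt{-\Delta}}P_{\lambda}f$. Taking $L^{p}_{x}$ norms and Hölder reduces each dyadic piece to a space-time bound for the localized propagator, with $\partial_{t}$ costing one power of $\lambda$. The core input is then the local smoothing estimate
\[
\big\|e^{\pm it\sqrt{-\Delta}}P_{\lambda}f\big\|_{L^{p}(\mathbb{R}^n\times[1,2])} \lesssim \lambda^{\sigma(p)+\varepsilon}\,\|f\|_{L^{p}(\mathbb{R}^n)}.
\]
The two regimes in \eqref{cndthm12} reflect the two forms this estimate takes: for $2<p\le \frac{2n+2}{n-1}$ one works with $q=2$ (using $H^{1/2+}_{t}\hookrightarrow L^{\infty}_{t}$) and a square-function estimate governed by $\ell^2$ decoupling for the cone, while for $p>\frac{2n+2}{n-1}$ one takes $q=p$ and uses the full gain of $1/p$ derivatives over the fixed-time Miyachi--Peral bound. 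The crossover exponent $\frac{2n+2}{n-1}$ is exactly the Stein--Tomas exponent for $S^{n-1}$.

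Finally I would sum over $\lambda=2^{j}$. Combining the decay $\lambda^{-\beta-(n-1)/2}$ with the space-time bound $\lambda^{\sigma(p)+\varepsilon}$ and the cost of $\partial_{t}$ produces a geometric series whose convergence is equivalent to $\beta$ exceeding precisely the thresholds in \eqref{cndthm12}; one checks that the two resulting exponents agree at $p=\frac{2n+2}{n-1}$. The hard part is not any of these reductions but the local smoothing input itself: passing from the ranges of Stein \cite{Stein}, Bourgain \cite{Bourgain} and \cite{MSS} to \eqref{cndthm12} rests on sharp decoupling and square-function estimates, and it is their optimality that dictates how far $\beta$ can be lowered.
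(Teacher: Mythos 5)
This theorem is not proved in the paper at all: it is quoted verbatim from \cite{MYZ} as background, so there is no internal proof to compare your argument against. That said, your outline is a faithful description of the strategy actually used in \cite{MYZ} and its predecessors: split off the low frequencies (Hardy--Littlewood), use the Bessel asymptotics to identify the high-frequency part of $m_{\b}$ as half-wave propagators with symbol order $-\b-(n-1)/2$, linearize the supremum via the Sobolev embedding in $t$, and feed in local smoothing estimates for $e^{\pm it\sqrt{-\Delta}}P_{\lambda}$. Your exponent bookkeeping is consistent: the two thresholds in \eqref{cndthm12} meet at $p=\frac{2n+2}{n-1}$ with common value $\b=-\frac{(n-1)^2}{2n+2}$, and the first regime recovers Stein's $\b>\frac{2-n}{2}$ at $p=2$. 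The caveat, which you acknowledge yourself, is that this is a reduction rather than a proof: the entire content lives in the local smoothing/decoupling and square-function inputs, and the reduction to $t\in[1,2]$ plus the summation over dyadic time scales also needs the $\varepsilon$-room in those estimates to be tracked. As a blind reconstruction of the cited proof's architecture it is accurate; as a self-contained argument it is incomplete by design.
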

The range of $\b$ in Theorem \ref{thm:MYZ}, for $p > 2$, is strictly wider than in Theorem \ref{thm:Stein};
see Figure \ref{fig} below. However,
according to our best knowledge, it is not known whether it is already optimal.
We strongly believe it is not, see our Conjecture \ref{con:main} below.
We remark that both Theorems \ref{thm:Stein} and \ref{thm:MYZ} were originally proved for complex $\b$, but
for our purposes it is enough to state them for real values of the parameter.

A restriction of $M^{\b}$ to radially symmetric functions is still of interest and, moreover,
admits a more explicit finer analysis that potentially leads to more general or stronger theorems. In this connection we invoke
two quite recent results of Duoandikoetxea, Moyua and Oruetxebarria \cite{DMO,DMO2}. The first of them is a characterization
of weighted $L^p_{\textrm{rad}}$-boundedness of $M^0_*$ with radial power weights involved; here and elsewhere
the subscript ``rad'' indicates the subspace of radial functions.
\begin{thm}[{\cite{DMO}}] \label{thm:Duo1}
Let $n \ge 2$. The operator $M^0_*$ is bounded on $L^p_{\textrm{rad}}(\mathbb{R}^n,|x|^{\gamma}dx)$ for $1< p < \infty$
(for $2<p<\infty$ in case $n=2$) if and only if
$$
1-n \le \gamma < p(n-1)-n.
$$
\end{thm}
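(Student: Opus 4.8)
The plan is to exploit radial symmetry to reduce the statement to a sharp one-dimensional weighted inequality, to establish necessity by testing on two explicit functions, and to prove sufficiency by splitting the supremum according to the size of $t$ relative to $\rho=|x|$. Writing a radial $f$ as $f(x)=f_0(|x|)$ and passing to polar coordinates on $S^{n-1}$, a direct change of variables yields the explicit kernel representation
\[
M^0f(x,t)=c_n\int_{|\rho-t|}^{\rho+t}f_0(s)\,\frac{s}{\rho t}\Big(\frac{\big([s^2-(\rho-t)^2]\,[(\rho+t)^2-s^2]\big)^{1/2}}{2\rho t}\Big)^{n-3}ds,\qquad \rho=|x|.
\]
Since $\|f\|_{L^p(|x|^\gamma dx)}^p=c\int_0^\infty|f_0(\rho)|^p\rho^{\gamma+n-1}\,d\rho$, the theorem is equivalent to boundedness of the one-dimensional maximal operator $f_0\mapsto\sup_{t>0}|M^0f(\cdot,t)|$ on $L^p(\RR_+,\rho^{\delta}\,d\rho)$ with $\delta:=\gamma+n-1$, and the claimed range becomes $0\le\delta<p(n-1)-1$.

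For necessity I would test on two functions. Taking $f_0=\chi_{(0,1)}$ and $t=\rho$ (the sphere through the origin), the kernel behaves like $s^{n-2}\rho^{-(n-1)}$ for $s\ll\rho$, so $M^0_*f(x)\gtrsim\rho^{-(n-1)}$ for large $\rho$; then $\|M^0_*f\|_{L^p(\rho^\delta)}^p\gtrsim\int_1^\infty\rho^{\delta-p(n-1)}\,d\rho$, which forces $\delta<p(n-1)-1$. For the lower endpoint I would take $f_0=\chi_{(R,R+1)}$ with $R$ large, so that $\|f_0\|_{L^p(\rho^\delta)}^p\approx R^\delta$. Choosing $t=R+\tfrac12$ makes the support interval $[\,t-\rho,t+\rho\,]$ lie inside $(R,R+1)$ for every $\rho<\tfrac12$, whence $M^0f(x,t)=M^0\mathbf 1(x,t)=1$ and $M^0_*f\ge 1$ on $\{|x|<\tfrac12\}$. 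Thus $\|M^0_*f\|_{L^p(\rho^\delta)}\gtrsim 1$ uniformly in $R$, while the input norm is $\approx R^{\delta/p}\to0$ if $\delta<0$; hence $\delta\ge0$ is necessary.

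For sufficiency I would split $\sup_{t>0}$ into a local part $t\le\rho/2$ and a global part $t>\rho/2$. In the local regime the kernel is comparable to $t^{-(n-2)}(t^2-(s-\rho)^2)^{(n-3)/2}\chi_{\{|s-\rho|<t\}}$, a unit-mass average over an interval of length $2t<\rho$ centred at $\rho$; since such an interval stays inside a fixed dyadic annulus on which $\rho^\delta$ is essentially constant, this part is dominated by the one-dimensional Hardy--Littlewood maximal function and is therefore bounded on $L^p(\rho^\delta)$ for every $\delta$ and every $p>1$ (when $n\ge3$). In the global regime the support interval has length $\sim\rho$ and may reach the origin; here I would majorize the operator by two model operators: the focusing Hardy operator $T_1f_0(\rho)=\rho^{-(n-1)}\int_0^{\rho}s^{n-2}|f_0(s)|\,ds$, which by the classical weighted Hardy inequality (equivalently, the Hardy--Littlewood--P\'olya criterion for kernels homogeneous of degree $-1$) is bounded on $L^p(\rho^\delta)$ exactly when $\delta<p(n-1)-1$, and a spreading operator governing the contribution of mass located at $s\gg\rho$, which accounts for the closed endpoint $\delta\ge0$. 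Combining the three pieces yields the full range $0\le\delta<p(n-1)-1$.

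The hard part will be the global/spreading estimate and, above all, extracting the sharp range from it. The naive bound that replaces the light-cone factor $(\rho^2-(s-t)^2)^{(n-3)/2}$ by its maximum $\rho^{n-3}$ reduces the spreading operator to $\rho^{-1}\sup_{t}\int_{t-\rho}^{t+\rho}|f_0|$, which is only bounded for $\delta<p-1$ and thus collapses the admissible interval; the point is therefore to retain the genuine decay of the kernel near the edges $s=|\rho-t|$ and $s=\rho+t$, so as to reach the exponent $p(n-1)-1$ and to recover the closed endpoint $\delta=0$. A secondary difficulty is the endpoint dimension $n=2$, where the exponent $(n-3)/2=-\tfrac12$ renders the kernel genuinely singular and the local maximal operator becomes the circular maximal average; this piece fails for $p\le2$, which is precisely why the statement excludes $1<p\le2$ when $n=2$ and requires a separate argument.
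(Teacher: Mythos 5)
Your outline for $n\ge 3$ is essentially the paper's own argument. The paper obtains this theorem as the case $\a=n/2-1$, $\b=0$ of Theorem \ref{thm:main}; since $-\b\in\NN$ the kernel vanishes for $t>\rho+s$, and the proof reduces to the $E$-region bound of Lemma \ref{lem:ctrlPhi}, which is exactly your three-piece splitting: the local piece $t\le\rho/2$ dominated by $L$ (region $E_1$), the intermediate piece $\rho/2\le t<3\rho$ dominated by the Hardy operator $H_{n-1}$ evaluated at $4\rho$ (region $E_2$, giving the upper bound $\delta<(n-1)p-1$), and the remote piece $t\ge 3\rho$ where $s\simeq t$ dominated by $R$ (region $E_3$, giving the closed endpoint $\delta\ge 0$). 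Your two necessity tests are likewise the same as the paper's Lemma \ref{lem:counter}(a) with $\b=0$: a bump near the origin viewed from large $\rho$ with $t\simeq\rho$ for the upper bound, and a translated bump $\chi_{(R,R+1)}$ probed from small $\rho$ for the lower bound. One small remark: your worry at the end about "retaining the decay near the edges" is already resolved by your own choice of majorant, since for $n\ge3$ the exponent $(n-3)/2$ is nonnegative and the correct move is simply to keep the factor $s^{n-2}$ (giving $H_{n-1}$ rather than $H_1$), which you do.

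The genuine gap is $n=2$, which you defer entirely. There the kernel has the edge singularity $\big([s^2-(\rho-t)^2][(\rho+t)^2-s^2]\big)^{-1/2}$, and no pointwise domination by $L$, $H_1$ and $R$ is available; this is not a technicality but the reason the range of $p$ changes. The paper's device (Lemma \ref{lem:ctrlPsiE}, specialized to $\a=0$, $\b=0$) is to apply H\"older's inequality with exponent $1+\theta$ against the singular factor, using the identity $[t^2-(\rho-s)^2][(\rho+s)^2-t^2]=[(t+\rho)^2-s^2][s^2-(\rho-t)^2]$ and the beta-integral evaluation of Lemma \ref{lem:iest}; the conjugate-exponent integral converges only when $\frac{1}{1+\theta}<\a+\b+\frac12=\frac12$, i.e.\ $\theta>1$, which yields control by $[D(f^{1+\theta})]^{1/(1+\theta)}+[R(f^{1+\theta})]^{1/(1+\theta)}$ and hence boundedness only for $p>1+\theta>2$. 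Without this (or an equivalent substitute such as the argument in \cite{DMO}), your proof establishes the theorem only for $n\ge3$.
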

Another result in this spirit provides sufficient conditions for weighted $L^p_{\textrm{rad}}$-boundedness
of $M^{(3-n)/2}_*$, again with radial power weights involved.
\begin{thm}[{\cite{DMO2}}] \label{thm:Duo2}
Let $n \ge 2$ and $1 < p < \infty$. Then the operator $M^{(3-n)/2}_*$ is bounded on
$L^p_{\textrm{rad}}(\mathbb{R}^n,|x|^{\gamma}dx)$ provided that
$$
\frac{n-3}{2} p + 1 -n < \gamma < \frac{n+1}2 p -n,
$$
where in case $n=2$ the lower bound for $\gamma$ should be replaced by $-3/2$, and with the first inequality weakened for
odd dimensions $n$.
\end{thm}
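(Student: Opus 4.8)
The plan is to pass to radial profiles and reduce everything to a one-dimensional weighted inequality on $\RR_+$, taking advantage of the fact that $\b=(3-n)/2$ is the most explicit member of the family. Writing $f(x)=f_0(|x|)$ gives $\|f\|_{L^p(\RR^n,|x|^\gamma dx)}^p=\omega_{n-1}\int_0^\infty|f_0(r)|^pr^{\gamma+n-1}\,dr$, so the claim amounts to the boundedness of $f_0\mapsto\sup_{t>0}|M_t^{\a,\b}f_0|$ on $L^p(\RR_+,r^{\gamma+n-1}dr)$ with $\a=n/2-1$ (the Hankel order attached to the dimension) and $\b=(3-n)/2$. For this $\b$ one has $\b+n/2-1=1/2$, and since $J_{1/2}(s)=\sqrt{2/(\pi s)}\,\sin s$ the multiplier collapses to $m_{(3-n)/2}(s)=\sin s/s$. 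Hence $M^{(3-n)/2}f(\cdot,t)=t^{-1}W_tf$, where $W_t=\mathcal F^{-1}(|\xi|^{-1}\sin(t|\xi|)\mathcal F\,\cdot\,)$ is the sine propagator of the wave equation, and $M^{(3-n)/2}_*f=\sup_{t>0}t^{-1}|W_tf|$. This isolates the distinguished role of $\b=(3-n)/2$ and already signals that Huygens' principle will intervene.

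First I would record the kernel via the estimates of \cite{CNR} specialized to $\a=n/2-1$, $\b=(3-n)/2$. The decisive structural feature is the \emph{parity} of $n$, inherited from $W_t$: for odd $n$, Huygens' principle keeps the averaging confined to $z\in[\,|r-t|,r+t\,]$, an interval bounded away from the origin even when $t>r$, whereas for even $n$ the solid light cone fills $z\in[\,(r-t)_+,r+t\,]$, which reaches the origin as soon as $t>r$. This extra near-origin mass in even dimensions is exactly what will make the lower endpoint of the $\gamma$-range strict, and the low, even dimension $n=2$ degenerate. Since we only need $\sup_{t>0}|M_t^{\a,\b}f_0|\le\sup_{t>0}\int_0^\infty|K_t^{\a,\b}(r,z)|\,|f_0(z)|\,dz$, we may assume $f_0\ge0$; no cancellation is sacrificed at the sharp range because the oscillation of $W_t$ lives in $t$, not in $z$.

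Next I would linearize the supremum by splitting according to whether $t\le r$ or $t>r$ and decomposing dyadically in $t/r$. The contribution of $t\le r$, coming from averages over intervals centred at $r$, is dominated by a Hardy--Littlewood-type maximal operator, while the contribution of $t>r$ is dominated by Hardy-type averaging operators. Here the finite width $\sim\min(t,r)$ of the averaging interval must be respected: replacing it by $[\,0,r+t\,]$ overestimates the near-origin part and destroys the sharp lower bound, so the dyadic organization in $t/r$ is essential, and it is precisely at this point that the parity of $n$ re-enters through the reach of the interval toward the origin.

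Finally I would apply the power-weight theory: the one-dimensional Hardy--Littlewood maximal operator is bounded on $L^p(\RR_+,r^\de dr)$ under the Muckenhoupt condition, and the relevant Hardy-type operators under explicit one-sided conditions on $\de$. Combining these and setting $\de=\gamma+n-1$ yields the open range $\frac{n-3}{2}p+1-n<\gamma<\frac{n+1}{2}p-n$, the lower bound reflecting the far-field averages and the upper bound the integrability of the averages near the origin; as a check, for $n=3$ (so $\b=0$) this returns $-2\le\gamma<2p-3$, matching Theorem \ref{thm:Duo1}. The \emph{main obstacle} is the behaviour exactly at the lower endpoint $\gamma=\frac{n-3}{2}p+1-n$: a crude estimate is too lossy there, and one must use the fine endpoint structure of $K_t^{\a,\b}$ to show that Huygens' principle permits equality for odd $n$, that it fails for even $n$, and that $n=2$ requires the separate value $-3/2$. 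A secondary difficulty, which the dyadic decomposition is designed to absorb, is the uniform control of the supremum across the overlap region $t\approx r$.
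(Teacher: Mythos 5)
Your skeleton is the same as the paper's: reduce to the profile operator $M^{\a,\b}_t$ with $\a=n/2-1$, $\b=(3-n)/2$, invoke the kernel estimates of \cite{CNR}, split the $(t,x,z)$-space according to the size of $t/x$, dominate each piece by explicit maximal/averaging operators, and finish with power-weight bounds for those operators (the paper obtains the theorem as the specialization $\delta=\gamma+2\a+1$ of Theorem \ref{thm:main}, proved exactly this way). Your observations that $m_{(3-n)/2}(s)=\sin s/s$, that Huygens' principle ($-\b\in\NN$ for odd $n$, so the kernel vanishes in $F$) is the source of the odd/even discrepancy, and the consistency check at $n=3$ are all correct.

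However, there is a genuine gap, and it sits precisely where you yourself locate ``the main obstacle.'' First, for $\b=(3-n)/2$ one always has $\a+\b=1/2$, which is exactly the borderline case of the kernel estimates: for even $n$ the bounds in Theorem \ref{thm:kerest}(4) carry logarithmic factors blowing up at the light cone, and your plan never says how to absorb them (the paper does this by majorizing $|K^{\a,\b}_t|$ by $\Psi^{\a,\b-\e}_t$ and letting $\e\to0^+$ at the level of the resulting open conditions). Second, ``Hardy-type averaging operators'' do not suffice for the far field $t>r$. The region $z\simeq t\ge 3x$ inside $E$ produces the operator $f\mapsto x^{\b}R(z^{-\b}f)$ with $R$ the remote maximal operator, whose weighted boundedness holds iff $\gamma\ge0$; this is what yields the lower endpoint $\delta\ge -\b p$ and its \emph{closed} form for odd $n$. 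For even $n$ the region $(t-x)/2\le z<t-x$, $t\ge3x$ of $F$ additionally produces the nonstandard operator $T_{\eta}$ of \eqref{eq:Tb}, which must be combined with a H\"older self-improvement step (applying $T_{(\a+1/2)(1+\theta)}$ to $f^{1+\theta}$ and optimizing $\theta$); it is this mechanism, not a Hardy bound, that forces the strict inequality for even $n\ge4$ and yields the anomalous value $-3/2$ at $n=2$. Your proposal leaves exactly these ingredients unsupplied. A minor additional point: the statement is a pure sufficiency claim, so nothing needs to be shown to ``fail'' at the endpoint for even $n$ --- indeed the paper records in Proposition \ref{prop:sharp} that only $-\b p\le\delta$ is known to be necessary, and the optimality of the strict inequality for even $n$ is left open.
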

Here and elsewhere by weakening a strict inequality we mean replacing ``$<$'' by ``$\le$''.
We take this opportunity to note that the statement of \cite[Theorem 1.1]{DMO2}
(an unweighted specification of Theorem \ref{thm:Duo2}) contains a small error, the upper
bound for $p$ in even dimensions higher than $3$ should be given by strict inequality, cf.\ \cite[Lemma 4.3]{DMO2}.
This problem affects the abstract of \cite{DMO2} as well. The radial improvement for the unweighted estimates
occurs only in odd dimensions higher than $3$.

We omit here discussion of more sophisticated mapping properties of $M^{\b}_*$, like boundedness from $L^1$ to weak $L^1$,
or from the Lorentz space $L^{1,1}$ to weak $L^1$ (which correspond to weak and restricted weak type estimates for
the maximal operator, respectively).
For this kind of results, especially in the radial case, we refer to \cite{DMO,DMO2} and references given there.

The maximal operator $M^{\a,\b}_*$ we shall study generalizes the restriction of $M^{\b}_*$ to radial functions,
since, in a sense, it covers a continuous range of dimensions $n=2\a+2$, $\a > -1$. Our main result,
Theorem \ref{thm:main} below, contains
Theorems \ref{thm:Duo1} and \ref{thm:Duo2} as special cases; in particular, we deliver a partly alternative
and seemingly simpler proof of Theorem \ref{thm:Duo2}. Moreover, our more general perspective sheds new light on
the discrepancy between $n=2$ and higher dimensions in Theorems \ref{thm:Duo1} and \ref{thm:Duo2}, as well as on
the discrepancy between odd and even dimensions in Theorem \ref{thm:Duo2}.
Finally, we gain some intuition that enables us to conjecture an optimal result in the spirit of Theorems \ref{thm:Stein}
and \ref{thm:MYZ}, see Conjecture \ref{con:main} below.

Let $\a > -1$ and $\a+\b > -1/2$.
For each $t>0$ consider the integral operator
$$
M_t^{\a,\b}f(x)=\int_0^{\infty}K^{\a,\b}_t(x,z)f(z)\, d\mu_{\a}(z), \quad x \in \mathbb{R}_+,
$$
with the measure $d\mu_{\a}(x)=x^{2\a+1}\,dx$ and the kernel given by
$$
K_t^{\a,\b}(x,z) = \frac{2^{\a+\b}\Gamma(\a+\b+1)}{t^{\a+\b}(xz)^{\a}} \int_0^{\infty} J_{\a+\b}(ty)
	J_{\a}(xy) J_{\a}(zy) y^{1-\a-\b}\, dy.
$$
This kernel is well defined for $(t,x,z) \in \mathbb{R}^3_+$ such that, in general, $t \neq |x-z|$ and $t \neq x+z$.
Note that the integral here converges absolutely when $\a+\b > 1/2$, but otherwise the convergence at $\infty$ is
only conditional, in the Riemann sense.

For a radial function $f = f_0(|\cdot|)$ in $L^2(\mathbb{R}^n)$, $n \ge 2$, $M^{\b}f(x,t)$ is for each $t >0$ a radial
function in $x \in \mathbb{R}^n$ whose profile is given by $M^{n/2-1,\b}_t f_0$; see \cite[Corollary 4.2]{CNR}.
Clearly, the maximal operators $M^{\b}_*$ and
$$
M_*^{\a,\b}f(x)=\sup_{t>0}\big|M_t^{\a,\b}f(x)\big|
$$
are connected in the same way. Thus $M^{\b}_*$ is bounded on $L^p_{\textrm{rad}}(\mathbb{R}^n,|x|^{\gamma}dx)$
if and only if $M^{n/2-1,\b}_*$ is bounded on $L^p(\mathbb{R}_+,x^{\gamma}d\mu_{n/2-1})$.

We now formulate the main result of this paper.
\begin{thm} \label{thm:main}
Assume that $\a > -1$ and $\a+\b > -1/2$. Let $1 < p < \infty$ and $\delta \in \mathbb{R}$.
Then the maximal operator $M^{\a,\b}_*$ is bounded on $L^p(\mathbb{R}_+,x^{\delta}dx)$ if
\begin{equation} \label{cnd0}
\frac{1}{p} < \a + \b + \frac{1}2
\end{equation}
and
\begin{itemize}
\item[(a)] in case $\b \ge 1$,
\begin{equation} \label{cnda}
-1 < \delta < (2\a+2)p-1,
\end{equation}
\item[(b)] in case $0 < \b < 1$,
\begin{equation} \label{cndb}
\frac{-\b}{[\b \vee (\a+\b+1/2)]\wedge 1} < \delta < (2\a+\b+1)p-1,
\end{equation}
with the first inequality weakened when $\a+\b > 1/2$,
\item[(c)] in case $\b \le 0$,
\begin{equation} \label{cndc}
-\b p < \delta < (2\a+\b+1)p-1,
\end{equation}
with the first inequality weakened when $-\b \in \mathbb{N}$ or $\a+\b > 1/2$.
\end{itemize}
\end{thm}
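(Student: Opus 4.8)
The plan is to deduce the weighted $L^p$ bounds from sharp pointwise estimates of the kernel $K^{\alpha,\beta}_t(x,z)$, reducing the maximal operator to a combination of a one-dimensional Hardy--Littlewood maximal operator and weighted Hardy operators, for which the power-weight theory is classical. The natural first step is to exploit the dilation symmetry: a change of variables in the defining Bessel integral gives $K^{\alpha,\beta}_{\lambda t}(\lambda x,\lambda z)=\lambda^{-(2\alpha+2)}K^{\alpha,\beta}_t(x,z)$, so it suffices to understand the kernel on a single scale, say $t=1$. Using the estimates developed in \cite{CNR}, I would record that $K^{\alpha,\beta}_t(x,z)$ is essentially supported in the triangular region $|x-z|<t<x+z$ and that its size there is governed by the distances to the two cone boundaries $t=|x-z|$ and $t=x+z$, the relevant singular exponent being $\alpha+\beta-\tfrac12$; the standing assumption $\alpha+\beta>-\tfrac12$ keeps this exponent above $-1$, hence locally integrable.

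With these bounds in hand, I would estimate $M^{\alpha,\beta}_*f(x)\le \sup_{t>0}\int_0^\infty |K^{\alpha,\beta}_t(x,z)|\,|f(z)|\,d\mu_\alpha(z)$ and split the $z$-integration into a local part $z\simeq x$ and two global parts $z\ll x$ and $z\gg x$. In the two global parts the thinness of the triangular support pins $t\simeq x$ (for $z\ll x$) or $t\simeq z$ (for $z\gg x$), so the supremum over $t$ is harmless, and the kernel bound turns the contribution into weighted Hardy operators of the form $f\mapsto x^{-a}\int_0^x f\,d\mu_\alpha$ and $f\mapsto x^{b}\int_x^\infty z^{-c}f\,d\mu_\alpha$. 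Applying the classical power-weight criteria for Hardy operators produces the two endpoints of the $\delta$-range: the upper bound $\delta<(2\alpha+\beta+1)p-1$ (respectively $(2\alpha+2)p-1$ when $\beta\ge1$) reflects convergence as $z\to\infty$ governed by the cone exponent, while the lower bounds in (a)--(c) reflect the behavior as $z\to0$, which tightens as $\beta$ decreases and the kernel grows more singular. The endpoint weakenings (when $\alpha+\beta>\tfrac12$, so the defining integral converges absolutely, and when $-\beta\in\mathbb{N}$, where extra cancellation is available) are then read off at the boundary of the same criteria.

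In the local part $z\simeq x$, after a change of variables measuring the distance to the cone, the kernel bound collapses to a singular average whose supremum over $t$ is dominated by the one-dimensional Hardy--Littlewood maximal operator together with a local fractional maximal operator carrying the singularity $(t-|x-z|)^{\alpha+\beta-1/2}$. The condition \eqref{cnd0}, equivalently $\alpha+\beta-\tfrac12>-\tfrac{1}{p'}$, is precisely what places this singular factor in $L^{p'}$ locally and thereby makes the singular maximal average bounded on $L^p$; the Muckenhoupt $A_p$ theory for power weights then yields the same two-sided range for $\delta$ as above, so that the local and global contributions are compatible and can be added.

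The hard part will be the local estimate near the cone $t=|x-z|$. Once $\alpha+\beta<\tfrac12$ each individual $M^{\alpha,\beta}_t$ has a kernel that blows up like a negative power of the distance to the cone, and the supremum over $t$ is free to align with this blow-up for each $z$ separately; showing that the resulting singular maximal average is nonetheless $L^p$-bounded is where the genuine difficulty lies, and it is what forces the three-case split according to the sign and size of $\beta$. I expect case (c), $\beta\le0$, to be the most delicate, requiring the full strength of \eqref{cnd0} together with a careful interpolation between the bulk and boundary-layer estimates of the kernel.
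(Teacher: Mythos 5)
There is a genuine gap, and it starts at the foundation of your plan: the kernel $K^{\a,\b}_t(x,z)$ is \emph{not} essentially supported in the triangle $|x-z|<t<x+z$. It vanishes for $t<|x-z|$, but it is genuinely nonzero in the region $F=\{t>x+z\}$ for every $\b$ with $-\b\notin\mathbb{N}$ (see the bounds (2)--(8) of the kernel estimates quoted from \cite{CNR}). Consequently your claim that in the global ranges "the thinness of the triangular support pins $t\simeq x$ or $t\simeq z$" fails: for $t>x+z$ the supremum over $t$ is not pinned at all, and one must handle separately the far regime $t\ge 3x$ with $z$ ranging from $0$ up to $t-x$. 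This is precisely where the hardest part of the theorem lives. The subregion $(t-x)/2\le z<t-x$, $t\ge 3x$ produces (after the appropriate kernel bound) a maximal operator of the form $\sup_{t>2x}\int_{t/2}^t z^{\eta-1}|f(z)|(t-z+x)^{-\eta}\,dz$, and it is the weighted $L^p$ theory of this operator -- not of Hardy operators $x^{-a}\int_0^x$ or $x^b\int_x^\infty z^{-c}$ -- that generates the lower bound $-\b/([\b\vee(\a+\b+1/2)]\wedge 1)$ in case (b). No combination of the Hardy-type operators you list can produce that threshold, so your scheme cannot reach the stated conclusion in case (b) (nor the strict lower bound $-\b p<\delta$ in case (c) when $\a+\b\le 1/2$).

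Your treatment of the singular local part is also only half right. The heuristic that \eqref{cnd0} is the $L^{p'}$ local integrability of $(t-|x-z|)^{\a+\b-1/2}$ identifies the correct threshold, but applying H\"older with the exponent pair $(p,p')$ leaves you with $\|f\|_{L^p}$ over an interval rather than a maximal average, and the resulting bound is too crude to close the argument on weighted spaces. What is actually needed is H\"older with an auxiliary exponent $1+\theta<p$ chosen so that $\tfrac{1}{1+\theta}<\a+\b+\tfrac12$, yielding control by expressions such as $x^{\b}\bigl[D\bigl((z^{-\b}f)^{1+\theta}\bigr)(x)\bigr]^{1/(1+\theta)}$, followed by an optimization in $\theta$; that optimization is also what produces the sharp lower bounds for $\delta$. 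Finally, your outline does not address the borderline case $\a+\b=1/2$, where the kernel bounds carry logarithmic factors and one must trade them for a small loss $\b\mapsto\b-\varepsilon$, nor the case $-\b\in\mathbb{N}$, where the vanishing of the kernel in $F$ is exactly what permits the weakened inequality $-\b p\le\delta$.
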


We presume that Theorem \ref{thm:main} is sharp, but at the moment we are not able to give a complete justification
of this statement. Nevertheless, we can show that Theorem \ref{thm:main} is optimal,
or very close to optimal, for most choices of the parameters $\a,\b$.
\begin{propo} \label{prop:sharp}
Assume that $\a > -1$ and $\a+\b > -1/2$. Let $1 < p < \infty$ and $\delta \in \mathbb{R}$.
If $M_{*}^{\a,\b}$ is bounded on $L^p(\mathbb{R}_+,x^{\delta}dx)$, then \eqref{cnd0} holds and, moreover,
$$
-1 < \delta, \qquad -\b p \le \delta, \qquad \delta < (2\a+2)p-1, \qquad \delta < (2\a+\b+1)p-1.
$$
\end{propo}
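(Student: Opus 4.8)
The plan is to establish necessity of each listed condition by testing the operator against well-chosen functions, exploiting the asymptotic behavior of the kernel $K^{\a,\b}_t(x,z)$ recorded in \cite{CNR}. Each of the four upper/lower bounds on $\delta$ (together with \eqref{cnd0}) should be forced by a separate family of counterexamples, so the proof naturally splits into four independent arguments. For all of them the strategy is the same: assume $M^{\a,\b}_*$ is bounded on $L^p(\mathbb{R}_+,x^{\delta}dx)$, insert a trial function $f$ (or a sequence $f_\varepsilon$) into the boundedness inequality $\|M^{\a,\b}_* f\|_{L^p(x^\delta dx)} \le C \|f\|_{L^p(x^\delta dx)}$, and extract the stated inequality on $\delta$ from the requirement that the ratio of norms stays finite.

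For the integrability at the origin, namely $-1 < \delta$ and the condition \eqref{cnd0}, the plan is to test against functions concentrated near $z=0$ and examine $M^{\a,\b}_t f(x)$ for small $x$ and a fixed or scaled $t$. The bound $-1 < \delta$ is simply the requirement that the weight $x^\delta$ be locally integrable against $d\mu_\a$ near the origin at the relevant exponent, so it should follow by testing a characteristic function of a small interval and checking that $M^{\a,\b}_*$ applied to it remains in the space; if $\delta \le -1$ even constant-like inputs produce an output whose weighted $L^p$ norm diverges at $0$. For \eqref{cnd0}, the natural approach is to use the fact that $m_\b(0)=1$ forces $M^{\a,\b}_t f(x) \to$ (average-type quantity) as $t\to 0^+$, so that $M^{\a,\b}_* f$ dominates a pointwise value of $f$ up to the kernel normalization; the condition $1/p < \a+\b+1/2$ should emerge as exactly the threshold below which the kernel's singular behavior (reflecting the conditional-versus-absolute convergence of the defining Bessel integral, which switches at $\a+\b=1/2$) makes the supremum fail to be controlled.

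For the two upper bounds, the plan is to exploit the large-$x$ (global) behavior. Testing against a bump supported far from the origin and tracking how $M^{\a,\b}_t$ spreads mass should yield $\delta < (2\a+2)p - 1$ from the overall $L^p(d\mu_\a)$-scaling (this is the ``dimension'' bound $n=2\a+2$, matching the upper endpoints in Theorems \ref{thm:Duo1} and \ref{thm:Duo2}), while the sharper $\delta < (2\a+\b+1)p-1$ should come from the decay rate of the kernel in $t$ built into the prefactor $t^{-(\a+\b)}$ combined with the $J_{\a+\b}$ oscillation: choosing $t$ comparable to $x$ and letting $x\to\infty$ picks out the slower decay governed by $\b$. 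Concretely I would take $f$ supported on a unit-scale annulus at radius $R$, estimate $M^{\a,\b}_* f(x)$ from below on a region of size $\sim R$ using the main-term kernel asymptotics, and compare weighted norms as $R\to\infty$; divergence of the ratio unless $\delta < (2\a+\b+1)p-1$ gives the bound.

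\textbf{The main obstacle} will be obtaining the sharp lower bound $-\b p \le \delta$ (for $\b \le 0$), since here one must capture the precise rate at which the kernel blows up as $x \to 0$, and this rate is exactly $x^{\b}$-type in the regime $\b<0$. The delicate point is that the relevant singularity is produced by the supremum over $t$ rather than by any single $t$, so one cannot simply freeze a convenient $t$; instead I would need a trial function $f_\varepsilon$ concentrated so that for each small $x$ there is an \emph{optimal} choice $t=t(x)$ realizing the worst-case kernel size, and then integrate the resulting lower bound for $M^{\a,\b}_* f_\varepsilon(x)$ against $x^\delta\,dx$ near $0$. Balancing the concentration scale of $f_\varepsilon$ against the weight and verifying that $\|f_\varepsilon\|_{L^p(x^\delta dx)}$ stays bounded while $\|M^{\a,\b}_* f_\varepsilon\|$ blows up precisely when $\delta < -\b p$ is the technically sensitive step, and it is also where the sharp kernel estimates from \cite{CNR} will be indispensable.
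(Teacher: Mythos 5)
Your overall strategy (one family of test functions per condition, with lower bounds extracted from the kernel asymptotics of \cite{CNR}) is the same as the paper's, but several of your concrete choices are misassigned and at least two of the arguments would fail as described. Condition \eqref{cnd0} does not come from the $t\to 0^+$ limit or from $m_\b(0)=1$; it comes from the edge singularity $[t^2-(x-z)^2]^{\a+\b-1/2}$ of the kernel at the light cone, which for $\a+\b<1/2$ must be integrated against $f$: the paper takes $f(z)=\chi_{(1,2)}(z)(z-1)^{-\a-\b-1/2}\slash\log\frac{2}{z-1}$, which lies in $L^p(x^{\delta}dx)$ exactly when $1/p\ge\a+\b+1/2$ yet makes $M_t^{\a,\b}f(x)$ a divergent integral, so the operator is not even well defined. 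You also have the localizations of two conditions backwards. The bound $\delta<(2\a+2)p-1$ is a \emph{near-origin} phenomenon: it is forced by $f(z)=\chi_{(0,1)}(z)z^{-(\delta+1)/p}\slash\log\frac{2}{z}$ fed into the $F$-region part of the kernel, which integrates the mass near $z=0$ against $z^{2\a+1}dz$. The bound $-\b p\le\delta$ is not about kernel blow-up as $x\to 0$ with an optimal $t(x)$; it is forced by bumps escaping to infinity, $f_N(z)=\chi_{(N-1,N+1)}(z)z^{\b}$, whose norm is $\simeq N^{(\b p+\delta)/p}\to 0$ when $\delta<-\b p$ while $M^{\a,\b}_*f_N\gtrsim 1$ on $(1,2)$ (take $t=N$). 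Finally, your construction for $\delta<(2\a+\b+1)p-1$ --- a unit bump at radius $R$ tested on a region of comparable scale --- is essentially scale-invariant with respect to the power weight and at best reproduces \eqref{cnd0}; to detect this condition you must decouple the scales, e.g.\ fix $f=\chi_{(1,2)}$ and show $M^{\a,\b}_*f(x)\gtrsim x^{-2\a-\b-1}$ for large $x$ (take $t=x-1$).

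There is also a structural gap: every one of your arguments needs a \emph{lower} bound for $|M_t^{\a,\b}f|$, but the kernel is built from oscillating Bessel integrals and changes sign, so two-sided estimates on all of $E\cup F$ are not available and would not suffice anyway. The paper resolves this by invoking the sharpness part of \cite[Theorem 3.3]{CNR}: there are sub-regions $E_{\varepsilon}$, $F_{\varepsilon}$ (neighborhoods of the boundaries of $E$ and $F$, plus the far zone of $F$) where $K^{\a,\b}_t$ keeps constant sign and is genuinely comparable to the model expressions, and each counterexample must be checked to place all relevant $(t,x,z)$ inside one of these sub-regions. Your plan acknowledges that sharp asymptotics are needed but does not address the sign issue, and without it none of the lower bounds for the maximal function follows.
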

It follows that Theorem \ref{thm:main} gives sharp conditions when $\b \ge 1$ or when $\b \le 0$ and
[$-\b \in \mathbb{N}$ or $\a+\b > 1/2$] or when $\b \in (0,1)$ and $\a \le -1/2$.
For $\b \le 0$ and $-\b \notin \mathbb{N}$ and $\a+\b \le 1/2$ Theorem \ref{thm:main}
is sharp up to a possible weakening the strict inequality
in the lower bound for $\delta$ in \eqref{cndc}; this remains to be settled. In case $\b \in (0,1)$ and $\a > -1/2$,
the upper bound for $\delta$ in \eqref{cndb} is sharp, while optimality of the lower one remains an open question.
See also Remark \ref{rem:sharp} at the end.

Note that in Theorem \ref{thm:main} condition \eqref{cnd0} is meaningful only when $\a+\b < 1/2$.
Further, considering the lower bound in \eqref{cndb}, notice that
$$
\frac{-\b}{[\b \vee (\a+\b+1/2)]\wedge 1} =
\begin{cases}
-\b, & \a+\b \ge 1/2,\\
\frac{-\b}{\a+\b+1/2}, & \a+\b < 1/2 \;\; \textrm{and} \;\; \a> -1/2,\\
-1, & \a+\b< 1/2 \;\; \textrm{and} \;\; \a \le -1/2.
\end{cases}
$$
Observe also that \eqref{cnda} is in fact the $A_p$ condition for the power weight $x^{\delta-(2\a+1)}$
in the context of the space of homogeneous type $(\mathbb{R}_+,d\mu_{\a},|\cdot|)$.

In the circumstances of Theorem \ref{thm:main}, assuming in addition that $2\a+\b > -1$,
for each $p$ satisfying \eqref{cnd0} there is always a non-trivial interval of $\delta$ for which the
$L^p(x^{\delta}dx)$ boundedness holds. On the other hand, when $2\a+\b \le -1$ (i.e.\ $(\a,\b)$ is inside 
or on the rightmost side of the small
triangle with vertices $(-1/2,0)$, $(-1,1/2)$, $(-1,1)$) Theorem \ref{thm:main} gives no $L^p(x^{\delta}dx)$ boundedness
(the range of $\delta$ is empty for each $p$).

Taking $\delta= \gamma + 2\a+1$ and $\a=n/2-1$, $n \ge 2$ in Theorem \ref{thm:main}, and either $\b=0$ or $\b=(3-n)/2$,
one recovers Theorems \ref{thm:Duo1} and \ref{thm:Duo2}, respectively.
Furthermore, specifying Theorem \ref{thm:main} to the natural weight we set $\delta = 2\a+1$ and get the following.
\begin{coro} \label{cor:main}
Assume that $\a > -1$ and $\a+\b > -1/2$. Let $1 < p < \infty$. Then
$M^{\a,\b}_*$ is bounded on $L^p(\mathbb{R}_+,d\mu_{\a})$ provided that condition \eqref{cnd0} is satisfied and
\begin{equation} \label{cnd17}
\frac{-\b}{2\a+1} < \frac{1}{p} < 1-\frac{1-\b}{2\a+2},
\end{equation}
with the first inequality weakened if $-\b \in \mathbb{N}$ or $\a+\b > 1/2$.
These conditions are optimal up to a possible weakening of the strict inequality
in the lower bound in \eqref{cnd17} when $0 < -\b \notin \mathbb{N}$ and $\a+\b \le 1/2$.
\end{coro}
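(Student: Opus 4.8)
The plan is to read off Corollary \ref{cor:main} directly from Theorem \ref{thm:main} and Proposition \ref{prop:sharp} by setting $\delta = 2\a+1$, so that $L^p(\mathbb{R}_+, x^{\delta}dx)$ becomes $L^p(\mathbb{R}_+, d\mu_\a)$. The whole proof is then an exercise in translating the bounds on $\delta$ in \eqref{cnda}--\eqref{cndc} into bounds on $1/p$, followed by a comparison of the sufficient and necessary conditions at their common endpoint.

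First I would treat the upper bounds. In case (a) the bound $\delta < (2\a+2)p-1$ becomes $p>1$, which always holds, and since $\b\ge1$ the right-hand side of \eqref{cnd17}, namely $1-\frac{1-\b}{2\a+2}=\frac{2\a+\b+1}{2\a+2}\ge1$, is automatically $>1/p$. In cases (b) and (c) the common upper bound $\delta<(2\a+\b+1)p-1$ reads $2\a+2<(2\a+\b+1)p$; when $2\a+\b+1>0$ this is exactly $\frac1p<\frac{2\a+\b+1}{2\a+2}$, the upper bound in \eqref{cnd17}, and when $2\a+\b+1\le0$ both the admissible range in Theorem \ref{thm:main} and the interval \eqref{cnd17} are empty. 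Thus the upper inequality in \eqref{cnd17} reproduces the upper $\delta$-bounds in every case.

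Next I would turn to the lower bounds, where the bookkeeping is most delicate. When $\b\le0$ (case (c)) condition \eqref{cnd0} forces $\a+\b+1/2>1/p>0$, hence $\a>-1/2$ and $2\a+1>0$, so $-\b p<2\a+1$ rearranges to $\frac1p>\frac{-\b}{2\a+1}$, with the weakening rule ``$-\b\in\mathbb{N}$ or $\a+\b>1/2$'' carrying over verbatim. When $\b>0$ (cases (a) and (b)) the lower bound in \eqref{cnd17} is $\frac{-\b}{2\a+1}\le0<\frac1p$ in the principal regime $2\a+1>0$, hence inactive; correspondingly, using the explicit evaluation of $\frac{-\b}{[\b\vee(\a+\b+1/2)]\wedge1}$ recorded after Theorem \ref{thm:main}, one checks that the lower $\delta$-bounds in \eqref{cnda} and \eqref{cndb} hold automatically at $\delta=2\a+1$ whenever the interval \eqref{cnd17} is nonempty, i.e.\ whenever $2\a+\b>-1$. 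Assembling these observations gives the sufficiency half of the corollary.

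Finally, for optimality I would specialize Proposition \ref{prop:sharp} to $\delta=2\a+1$. The conditions $-1<\delta$ and $\delta<(2\a+2)p-1$ become vacuous ($\a>-1$ and $p>1$), while $\delta<(2\a+\b+1)p-1$ and $-\b p\le\delta$ reproduce, respectively, the upper inequality in \eqref{cnd17} and its lower inequality with ``$<$'' weakened to ``$\le$'', all under \eqref{cnd0}. Comparing with the sufficiency part, whose lower inequality is strict unless $-\b\in\mathbb{N}$ or $\a+\b>1/2$, shows that the two ranges coincide except possibly at the single point $\frac1p=\frac{-\b}{2\a+1}$, and only when $0<-\b\notin\mathbb{N}$ and $\a+\b\le1/2$, which is precisely the asserted optimality. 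The hard part will be the faithful matching of the piecewise lower bound of Theorem \ref{thm:main} with the single expression $\frac{-\b}{2\a+1}$: one must track the sign of $2\a+1$ and the position of $\a+\b$ relative to $1/2$, leaning on \eqref{cnd0} to guarantee $2\a+1>0$ throughout the regime where the lower bound is active.
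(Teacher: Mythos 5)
Your proof is correct and is essentially the paper's own (largely unstated) argument: Corollary \ref{cor:main} is obtained simply by substituting $\delta=2\alpha+1$ into Theorem \ref{thm:main} for sufficiency and into Proposition \ref{prop:sharp} for necessity, and rearranging the resulting inequalities in terms of $1/p$. Your bookkeeping of the piecewise lower bounds is accurate, including the key observation that for $\beta\le 0$ the standing assumption $\alpha+\beta>-1/2$ forces $2\alpha+1>0$, so the rearrangement of $-\beta p<2\alpha+1$ into $-\beta/(2\alpha+1)<1/p$ is legitimate precisely where that bound is active.
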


Observe that, in the context of Corollary \ref{cor:main}, condition \eqref{cnd17} is always satisfied in case $\b \ge 1$,
and for $0 \le \b < 1$ the lower bound in \eqref{cnd17} is always true.
Note also that there is no $p$ for which the boundedness holds if
either $\b \in (0,1)$ and $2\a+\b+1 \le 0$ or $\b<0$ and $\a + \b \le -1/4 - 1/(16\a + 12)$.
Otherwise, there is always
a non-trivial interval of $p$ for which $M^{\a,\b}_*$ is bounded on $L^p(d\mu_{\a})$.

Taking $\a = n/2-1$, $n \ge 2$, in Corollary \ref{cor:main} we obtain
\begin{coro} \label{cor:main2}
Let $n \ge 2$ and $1< p < \infty$. Then $M^{\b}_*$ is bounded on $L^p_{\textrm{rad}}(\mathbb{R}^n)$ provided that
\begin{equation} \label{cnd18}
p \le \frac{2n-1}{n-1} \;\; \textrm{and} \;\; \b > 1-n+n/p, \quad \textrm{or} \quad
	p > \frac{2n-1}{n-1} \;\; \textrm{and} \;\; \b > \frac{1-n}p,
\end{equation}
with the last inequality weakened when $-\b \in \mathbb{N}$ or $\b > \frac{3-n}2$.
This condition is optimal up to a possible weakening of the last inequality in \eqref{cnd18} in case
$-\b \notin \mathbb{N}$ and $\b \le \frac{3-n}2$.
\end{coro}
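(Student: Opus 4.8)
The plan is to deduce Corollary~\ref{cor:main2} directly from Corollary~\ref{cor:main} by specializing to $\a = n/2-1$, which is legitimate since for this value the space $L^p(\mathbb{R}_+,d\mu_{\a})$ is precisely the profile realization of the unweighted $L^p_{\textrm{rad}}(\mathbb{R}^n)$ (the natural weight corresponds to $\de=2\a+1=n-1$, i.e.\ $\gamma=0$ in the correspondence recorded before Theorem~\ref{thm:main}). Thus the entire task reduces to translating conditions \eqref{cnd0} and \eqref{cnd17} under the substitutions $2\a+1=n-1$ and $2\a+2=n$, and verifying that the outcome collapses to \eqref{cnd18}.

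First I would rewrite \eqref{cnd17}. Its lower bound $-\b/(2\a+1) < 1/p$ becomes $\b > (1-n)/p$, while its upper bound $1/p < 1-(1-\b)/(2\a+2)$ becomes, after clearing denominators, $\b > 1-n+n/p$. Hence \eqref{cnd17} is equivalent to $\b > \max\{1-n+n/p,\ (1-n)/p\}$. To identify the binding constraint I would compare the two quantities: their difference equals $1-n+(2n-1)/p$, which is nonnegative precisely when $p \le (2n-1)/(n-1)$. This produces exactly the threshold and the two alternatives of \eqref{cnd18}: for $p \le (2n-1)/(n-1)$ the condition $\b > 1-n+n/p$ dominates, and for $p > (2n-1)/(n-1)$ the condition $\b > (1-n)/p$ takes over. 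The permitted weakening of the lower bound transfers verbatim, using that $\a+\b>1/2$ is the same as $\b > (3-n)/2$, and likewise $-\b\in\NN$ is unchanged.

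It then remains to account for \eqref{cnd0}, which under the substitution reads $1/p < (n-1)/2 + \b$, i.e.\ $\b > 1/p - (n-1)/2$. Since $(n-1)/2+\b \ge 1 > 1/p$ whenever $\b \ge (3-n)/2$, condition \eqref{cnd0} is automatically in force once $\a+\b\ge 1/2$, so it can only become active in the range $\b < (3-n)/2$ (equivalently $\a+\b<1/2$), which is exactly the range to which the optimality caveat of Corollary~\ref{cor:main2} is attached. Reconciling this third lower bound $1/p-(n-1)/2$ against the binding bound coming from \eqref{cnd17} is the delicate step, and the point I expect to be the main obstacle, since it is precisely where all three constraints interact and where the standing assumption $\a+\b>-1/2$ must be invoked; careful bookkeeping of which of the three lower bounds is largest as $p$ varies is what the final form of \eqref{cnd18} and its attached weakening ultimately rest on. Finally, the optimality assertion follows by feeding $\a=n/2-1$ and $\de=n-1$ into Proposition~\ref{prop:sharp} and reading off the resulting necessary conditions, which match the sufficient ones up to the indicated weakening of the last inequality in the regime $-\b\notin\NN$, $\b\le(3-n)/2$.
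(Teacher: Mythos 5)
Your overall route is exactly the paper's: Corollary \ref{cor:main2} is meant to follow by setting $\a=n/2-1$ (so $2\a+1=n-1$, $2\a+2=n$, $\de=n-1$) in Corollary \ref{cor:main}, and your translation of \eqref{cnd17} into $\b>\max\{1-n+n/p,\ (1-n)/p\}$ with threshold $p=(2n-1)/(n-1)$, together with the transfer of the weakening clause via $\a+\b>1/2 \Leftrightarrow \b>(3-n)/2$, is correct. The genuine gap is the step you yourself single out as ``the main obstacle'' and then do not carry out: reconciling the third lower bound $\b>1/p-(n-1)/2$ coming from \eqref{cnd0}. A proof of the corollary as stated needs the implication \eqref{cnd18} $\Rightarrow$ \eqref{cnd0}; leaving that to ``careful bookkeeping'' is not a proof, and the standing assumption $\a+\b>-1/2$ (i.e.\ $\b>(1-n)/2$) is of no help here since $(1-n)/2<1/p-(n-1)/2$ for every $p$.

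Worse, if you actually do the bookkeeping, the implication fails. One computes
$$
\Big(1-n+\frac{n}{p}\Big)-\Big(\frac1p-\frac{n-1}{2}\Big)=(n-1)\Big(\frac1p-\frac12\Big),
\qquad
\frac{1-n}{p}-\Big(\frac1p-\frac{n-1}{2}\Big)=\frac{n-1}{2}-\frac{n}{p},
$$
so the lower bound from \eqref{cnd0} strictly dominates both lower bounds appearing in \eqref{cnd18} precisely on the interval $2<p<2n/(n-1)$, which contains the threshold $(2n-1)/(n-1)$. For such $p$ and for $\b$ between the \eqref{cnd18} bound and $1/p-(n-1)/2$ (e.g.\ $n=4$, $p=11/5$, $\b=-11/10$: then $\b>-13/11$ so \eqref{cnd18} holds, yet $\b<-23/22$ so \eqref{cnd0} fails, while $\a+\b=-1/10>-1/2$), Corollary \ref{cor:main} yields nothing; in fact Proposition \ref{prop:sharp} --- condition (a2) of Lemma \ref{lem:counter} specialized to $\de=n-1$ --- shows $M^{\b}_*$ is \emph{not} bounded there. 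So the obstacle is not merely an unfinished computation: \eqref{cnd18} alone cannot be sufficient on $2<p<2n/(n-1)$, and condition \eqref{cnd0}, i.e.\ $\b>1/p-(n-1)/2$, must be retained as an extra hypothesis (equivalently, the boundary should acquire a third, middle piece on $2\le p\le 2n/(n-1)$). Your observation that \eqref{cnd0} ``can only become active when $\a+\b<1/2$'' is true but does not dispose of it, since near the lower edge of the admissible $\b$'s one is exactly in that regime; the optimality half of your plan (feeding $\de=n-1$ into Proposition \ref{prop:sharp}) is fine and is precisely what exposes the problem with the statement you set out to prove.
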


We believe that weakening the inequality in Corollary \ref{cor:main2} reflects radial improvement of mapping properties
of $M^{\b}_*$. On the other hand, the fact that for $2 < p < 2(n+1)/(n-1)$
condition \eqref{cnd18} is strictly less restrictive than condition \eqref{cndthm12}, see Figure \ref{fig} below,
most probably indicates non-optimality of the yet known results on $L^p$-boundedness of $M^{\b}_*$
stated in Theorem \ref{thm:MYZ}; cf.\ \cite[Section 3, Problem (1)]{MYZ}.

Taking into account Corollary \ref{cor:main2} and known sharp results on boundedness of $M^0_*$ on $L^p(\mathbb{R}^n)$
we state the following.
\begin{conj} \label{con:main}
Let $n \ge 2$ and $1< p < \infty$.
The operator $M^{\b}_*$ is bounded on $L^p(\mathbb{R}^n)$ if and only if condition \eqref{cnd18} is satisfied.
\end{conj}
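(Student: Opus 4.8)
The plan is to split the equivalence. For the forward (necessity) implication, I would exploit that radial functions sit isometrically inside $L^p(\RR^n)$: if $M^{\b}_*$ is bounded on $L^p(\RR^n)$ it is a fortiori bounded on $L^p_{\textrm{rad}}(\RR^n)$, which by the profile correspondence recalled before Theorem \ref{thm:main} is the same as $L^p(\RR_+,d\mu_{n/2-1})$ boundedness of $M^{n/2-1,\b}_*$. Feeding this into Proposition \ref{prop:sharp} with $\a=n/2-1$ and $\delta=2\a+1$ returns precisely the necessary inequalities, which rearrange into condition \eqref{cnd18} together with the standing requirement \eqref{cnd0}; the only borderline left undecided is the endpoint $\b=(1-n)/p$ in the regime $-\b\notin\NN$, $\b\le(3-n)/2$, exactly the gap already flagged after Proposition \ref{prop:sharp} and Corollary \ref{cor:main2}. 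Thus necessity is in hand up to that single critical line.

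The reverse (sufficiency) implication is the heart of the conjecture and is where I expect the real difficulty to lie. On the radial subspace it is already supplied by Corollary \ref{cor:main2}; the open content is to drop the radiality assumption and obtain the bound on all of $L^p(\RR^n)$ under the same threshold \eqref{cnd18}. I would proceed through the standard reduction to oscillatory integral estimates: decompose $m_{\b}(t|\xi|)$ by a dyadic Littlewood--Paley partition in $|\xi|$, and use the large-argument asymptotics $J_{\b+n/2-1}(s)/s^{\b+n/2-1}\sim c\,s^{-\b-(n-1)/2}\cos(s-\phi)$ to recognize each frequency-localized piece as a half-wave Fourier integral operator $e^{\pm it\sqrt{-\Delta}}$ multiplied by a symbol of order $-\sigma$ with $\sigma=\b+(n-1)/2$, plus a more regular remainder. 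Linearizing the supremum via a measurable time $t=t(x)$ and invoking a Sobolev embedding in $t$, the maximal estimate is then reduced to fixed-exponent space-time bounds for the half-wave propagator carrying a gain of derivatives, that is, to \emph{local smoothing} estimates for these wave/Euler--Poisson--Darboux multipliers.

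The crucial and, I believe, decisive obstacle is that the threshold \eqref{cnd18} is exactly what the full Sogge local smoothing conjecture would yield, rather than what the square-function and decoupling inputs behind Theorem \ref{thm:MYZ} currently deliver. Concretely, in the window $2<p<(2n-1)/(n-1)$ the conjectured line $\b>1-n+n/p$ sits strictly below the line of \eqref{cndthm12} (compare Figure \ref{fig}), so closing the gap is tantamount to sharpening the available local smoothing exponents to their conjectured endpoints. In dimension $n=2$ this is now feasible: the sharp planar local smoothing estimate of Guth--Wang--Zhang supplies exactly the required input, and I would first execute the program above in that case as a model. For $n\ge3$, however, the local smoothing conjecture remains open, so a complete proof of Conjecture \ref{con:main} in full generality is presently out of reach; the radial analysis leading to Corollary \ref{cor:main2} should be read as strong evidence isolating the exact conjectured threshold rather than as a direct route to the non-radial estimate. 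A final technical wrinkle, to be resolved in tandem, is the behaviour at the transition exponent $p=(2n-1)/(n-1)$ and along the two critical lines, where an $\e$-free endpoint version of local smoothing would be needed to match the precise strict/weakened inequalities in \eqref{cnd18}.
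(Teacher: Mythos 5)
This statement is a \emph{conjecture}: the paper contains no proof of it, only the supporting evidence of Corollary \ref{cor:main2} and the comparison in Figure \ref{fig}, so there is no argument of the authors to measure yours against, and your proposal does not prove it either --- to your credit, you say so explicitly. The necessity half of your plan is sound and is essentially the paper's own evidence: boundedness on $L^p(\RR^n)$ restricts to $L^p_{\textrm{rad}}(\RR^n)$, hence to $L^p(\RR_+,x^{n-1}dx)$ boundedness of $M^{n/2-1,\b}_*$, and Proposition \ref{prop:sharp} with $\a=n/2-1$, $\delta=n-1$ returns $\b>1-n+n/p$ and $\b\ge(1-n)/p$, which combine into \eqref{cnd18} up to the single borderline you flag. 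One consequence you should draw out, though: the same proposition also forces \eqref{cnd0}, i.e.\ $\b>\tfrac1p-\tfrac{n-1}{2}$, and for $2<p<\tfrac{2n}{n-1}$ this is \emph{not} implied by \eqref{cnd18} (at the corner $p=\tfrac{2n-1}{n-1}$ condition \eqref{cnd18} allows $\b$ down to $-\tfrac{(n-1)^2}{2n-1}$, which lies strictly below $\tfrac1p-\tfrac{n-1}{2}=-\tfrac{(n-1)(2n-3)}{2(2n-1)}$). So in that window the necessary conditions you yourself derive are strictly stronger than \eqref{cnd18}, the literal ``if'' direction cannot hold there, and the equivalence can only be meant with \eqref{cnd0} carried along; this deserves to be stated rather than buried in the phrase ``together with the standing requirement \eqref{cnd0}''.

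The sufficiency half is where the proposal stops being a proof. The reduction you describe --- Littlewood--Paley decomposition of $m_\b(t|\xi|)$, Bessel asymptotics to extract half-wave propagators of order $-\b-\tfrac{n-1}{2}$, linearization of the supremum and a Sobolev embedding in $t$ --- is the standard route to Theorem \ref{thm:MYZ}, and you correctly identify that reaching the threshold \eqref{cnd18} in the window $2<p<\tfrac{2n+2}{n-1}$ would require local smoothing estimates at the conjectured sharp exponents, which are open for $n\ge3$. But diagnosing the obstruction is not overcoming it: no new estimate is proved, the $n=2$ case via Guth--Wang--Zhang is asserted rather than executed (and the strict-versus-weakened endpoint bookkeeping in \eqref{cnd18} would not follow from an $\varepsilon$-loss local smoothing input in any case), and nothing in the sketch upgrades the radial Corollary \ref{cor:main2} to the non-radial claim. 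What you have written is a reasonable research programme and a correct account of why the statement is hard; it is not a proof, and the statement remains a conjecture.
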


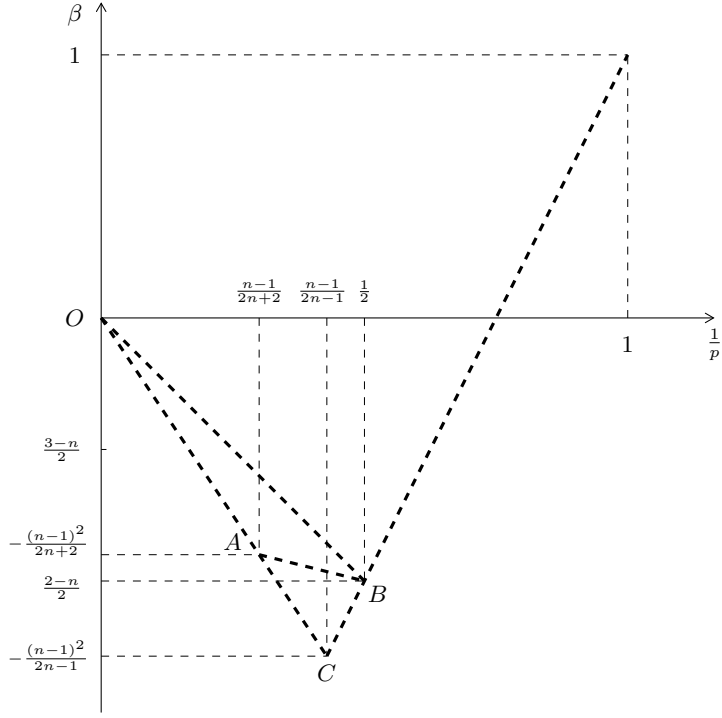
\begin{figure}
\centering
\begin{tikzpicture}[scale=3.5]
\small{
\draw[arrows=-angle 60] (0,-1.5) -- (0,1.2);
\draw[arrows=-angle 60] (0,0) -- (2.33,0);
\node at (-0.1,0) {$O$};
\node at (-0.1,1.15) {$ \beta$};
\node at (2.33,-0.1) {$\tfrac{1}{p}$};
\node at (-0.1,1) {$1$};
\node at (2,-0.1) {$1$};
}
\tiny{
\node at (-0.2,-9/7)  {$-\tfrac{(n-1)^2}{2n-1}$}; 
\node at (0.6,0.1) {$\tfrac{n-1}{2n+2}$}; 
\node at (1,0.1) {$\tfrac{1}{2}$}; 
\node at (-0.2,-0.85) {$-\tfrac{(n-1)^2}{2n+2}$}; 
\node at (-0.15,-1.025) {$\tfrac{2-n}{2}$}; 
\node at (0.84,0.1) {$\tfrac{n-1}{2n-1}$}; 
\node at (-0.15,-0.5) {$\tfrac{3-n}{2}$}; 
}
\small{
\node at (0.5,-0.85) {$A$};
\node at (6/7, -1.35) {$C$};
\node at (1.05,-1.05) {$B$};
\draw[very thin, dashed] (0,1) -- (2,1); 
\draw[very thin, dashed] (2,0) -- (2,1); 
\draw[very thin, dashed] (0,-9/7) -- (6/7,-9/7); 
\draw[very thin, dashed] (6/7,0) -- (6/7,-9/7); 
\draw[very thin, dashed] (0,-0.9) -- (0.6,-0.9); 
\draw[very thin, dashed] (0.6,0) -- (0.6,-0.9); 
\draw[very thin, dashed] (0,-1) -- (1,-1); 
\draw[very thin, dashed] (1,0) -- (1,-1); 
\draw[very thick, dashed] (0,0) -- (0.6,-0.9); 
\draw[very thick, dashed] (0.6,-0.9) -- (1,-1); 
\draw[very thick, dashed] (6/7,-9/7) -- (2,1); 
\draw[very thick, dashed] (0.6,-0.9) -- (6/7,-9/7); 
\draw[very thick, dashed] (0,0) -- (1,-1); 
\draw[very thin, dashed] (0,-0.5) -- (0.02,-0.5);
}
\end{tikzpicture} 

\caption{Regions $OAB$ and $ABC$ visualize differences between Stein's condition \eqref{cndthm11}, the less restrictive
condition \eqref{cndthm12} due to Miao et al., and the least restrictive condition \eqref{cnd18} from our radial case
and Conjecture \ref{con:main} for the general case, respectively.
Picture for $n=4$, with different axes scaling.} \label{fig}
\end{figure}

Finally, we comment on pointwise almost everywhere convergence $M^{\a,\b}_t f \to f$ as $t \to 0^+$.
This is an important issue, due to connections of $M_t^{\a,\b}$ with solutions to certain initial-value PDE problems.
It is well known that the key ingredient leading to such results are suitable mapping properties of $M^{\a,\b}_*$,
or actually of a smaller truncated maximal operator
$$
M^{\a,\b}_{*,\textrm{tru}}f(x) = \sup_{0 < t < x/2} \big|M^{\a,\b}_t f(x) \big|,
$$
see e.g.\ \cite[Section 5]{DMO2} and the proof of \cite[Corollary 3.6]{CoCoSt}.
Thus, from this point of view, a remarkable by-product
of our analysis related to $M^{\a,\b}_*$ is the following control in terms of the local Hardy-Littlewood maximal
operator $L$ (see Section \ref{ssec:23} below for the definition).
\begin{propo} \label{prop:Mloc}
Assume that $\a > -1$ and $\a+\b > -1/2$. Then 
\begin{equation} \label{Mlocctrl}
M^{\a,\b}_{*,\textrm{tru}}f(x) \lesssim
	\begin{cases}
		Lf(x), & \quad \textrm{if} \;\; \a+\b \ge 1/2,\\
		x^{\b}[L(z^{-\b}f)^{1+\theta}(x)]^{1/(1+\theta)}, & \quad \textrm{if} \;\; \a+\b < 1/2,
	\end{cases}
\end{equation}
uniformly in locally integrable functions $f$ on $\mathbb{R}_+$ and $x \in \mathbb{R}_+$,
with any fixed $\theta > 0$ such that $\frac{1}{1+\theta} < \a + \b +1/2$.

Consequently, if $1 < p < \infty$ satisfies condition \eqref{cnd0} then
$M^{\a,\b}_{*,\textrm{tru}}$ is bounded on $L^p(\mathbb{R}_+,x^{\delta}dx)$ for any $\delta \in \mathbb{R}$.
Moreover, in case $\a+\b \ge 1/2$, $M^{\a,\b}_{*,\textrm{tru}}$ is of weak type $(1,1)$ with respect to the measure
space $(\mathbb{R}_+,x^{\delta}dx)$ with any $\delta \in \mathbb{R}$.
\end{propo}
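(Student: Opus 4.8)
The plan is to derive \eqref{Mlocctrl} from pointwise kernel bounds in the truncated range and then read off the mapping properties. Write $I=(x-t,x+t)$, so that $\mu_\a(I)\approx t\,x^{2\a+1}$. First I would specialize the kernel estimates of \cite{CNR} to $0<t<x/2$. In this range $K^{\a,\b}_t(x,\cdot)$ is supported in $I\subset(x/2,3x/2)$, the constraint $t<x+z$ being vacuous; hence $z\approx x$, $x+z\approx x$, and the only degenerate quantities are $t$ and the distances $t-|x-z|$ to the endpoints $z=x\pm t$. I expect the estimates of \cite{CNR} to collapse there to
$$|K^{\a,\b}_t(x,z)|\lesssim\frac{1}{t\,x^{2\a+1}}\Big(\frac{t}{t-|x-z|}\Big)^{s},\qquad s:=\Big(\tfrac12-(\a+\b)\Big)\vee 0,$$
uniformly in $0<t<x/2$ and $z\in I$; since $\a+\b>-1/2$ we have $s<1$, so $|K^{\a,\b}_t(x,\cdot)|\,d\mu_\a$ has bounded mass on $I$.

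When $\a+\b\ge1/2$ we have $s=0$, whence $|K^{\a,\b}_t(x,z)|\lesssim 1/\mu_\a(I)$ on $I$; integrating against $|f|\,d\mu_\a$ bounds $|M^{\a,\b}_tf(x)|$ by the $d\mu_\a$-average of $|f|$ over $I$, and the supremum over $t<x/2$ gives the first line of \eqref{Mlocctrl}. The $L^p(x^\delta dx)$ and weak type $(1,1)$ claims then follow from the same properties of $L$, valid with respect to $x^\delta dx$ for every $\delta\in\RR$ and $1<p<\infty$: since $L$ is local, on each admissible ball the weight $x^\delta$ is comparable to a constant, so no $A_p$-type restriction on $\delta$ is produced. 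Here \eqref{cnd0} is automatic, as $\a+\b+1/2\ge1>1/p$.

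When $\a+\b<1/2$ the exponent $s=1/2-(\a+\b)\in(0,1)$ yields integrable endpoint singularities, which I would absorb by Hölder's inequality at the price of a power $1+\theta$ on $f$. Fixing any admissible $\theta$ and inserting $1=z^{\b}z^{-\b}$, Hölder with exponents $1+\theta$ and $q:=(1+\theta)/\theta$ gives
$$\int_I|K^{\a,\b}_t(x,z)|\,|f|\,d\mu_\a\le\Big(\int_I(z^{-\b}|f|)^{1+\theta}\,d\mu_\a\Big)^{\frac{1}{1+\theta}}\Big(\int_I\big(|K^{\a,\b}_t(x,z)|\,z^{\b}\big)^{q}\,d\mu_\a\Big)^{\frac1q}.$$
The first factor is at most $(\mu_\a(I))^{1/(1+\theta)}\big[L\big((z^{-\b}f)^{1+\theta}\big)(x)\big]^{1/(1+\theta)}$. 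For the second, $z^\b\approx x^\b$ on $I$ and the change of variables $u=t-|x-z|$ reduce the integral to $\int_0^t u^{-sq}\,du$, finite exactly when $sq<1$, i.e. $\frac{1}{1+\theta}<\a+\b+\frac12$ — precisely the hypothesis on $\theta$ — and a short computation gives the second factor $\approx x^\b(\mu_\a(I))^{-1/(1+\theta)}$. Multiplying, the powers of $\mu_\a(I)$ cancel and the supremum over $t<x/2$ yields the second line of \eqref{Mlocctrl}.

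For the $L^p(x^\delta dx)$ boundedness in this case I would exploit the freedom in $\theta$: under \eqref{cnd0} one may choose $\theta$ with $\frac1p<\frac{1}{1+\theta}<\a+\b+\frac12$, so that $P:=p/(1+\theta)>1$. Writing $g=(z^{-\b}|f|)^{1+\theta}$, the bound \eqref{Mlocctrl} reduces $\|M^{\a,\b}_{*,\textrm{tru}}f\|_{L^p(x^\delta dx)}^p$ to $\int[Lg]^{P}x^{\b p+\delta}\,dx$, which by the $L^P(x^{\b p+\delta}dx)$-boundedness of $L$ is controlled by $\int g^{P}x^{\b p+\delta}\,dx$; since $(1+\theta)P=p$ this unwinds to $\int|f|^p x^\delta\,dx$, as desired. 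The main obstacle I anticipate is the very first step: distilling from the multi-regime kernel estimates of \cite{CNR} the single clean bound above with the sharp exponent $s$, uniformly across the truncated range and in particular across the two endpoints $z=x\pm t$; once this is secured, the Hölder bookkeeping and the tuning of $\theta$ are routine and, as the threshold $sq<1$ shows, reproduce exactly condition \eqref{cnd0}.
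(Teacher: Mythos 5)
Your argument is correct and is essentially the paper's own: the paper obtains \eqref{Mlocctrl} by restricting the kernel bounds of Theorem \ref{thm:kermaj} to the region $E_1=\{t<x/2\}$ (where $z\simeq x$ and $(x+z)^2-t^2\simeq xz$, which collapses the various estimates to exactly your unified bound with exponent $s=(\tfrac12-(\a+\b))\vee 0$, the logarithm in the case $\a+\b=1/2$ being harmless there) and then applies the same H\"older trick with exponent $1+\theta$ already used in the proof of Lemma \ref{lem:ctrlPsiE}, the threshold $\frac{1}{1+\theta}<\a+\b+\tfrac12$ arising from the same endpoint integrability computation. The only difference is presentational: you rederive the $E_1$ bounds self-contained and carry the weight $z^{\b}$ along, whereas the paper cites the corresponding steps inside Lemmas \ref{lem:ctrlPhi} and \ref{lem:ctrlPsiE} (which first reduce to $\b=0$), and your deduction of the $L^p(x^{\delta}dx)$ and weak type consequences from the corresponding properties of $L$ matches the paper's.
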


\begin{coro} \label{cor:ae}
Let $\a,\b$ and $p$ be as in Proposition \ref{prop:Mloc}. Then
$$
M^{\a,\b}_t f(x) \to f(x) \;\; \textrm{for}\;\; \textrm{a.a.}\;\; x \in \mathbb{R}_+ \;\; \textrm{as}\;\; t \to 0^+
$$
for any locally integrable function $f$ on $\mathbb{R}_+$ in case $\a+\b \ge 1/2$, or for any $f$ locally
in $L^p(\mathbb{R}_+,dx)$ with $p$ satisfying condition \eqref{cnd0} in case $\a+\b < 1/2$.
\end{coro}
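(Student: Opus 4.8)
The plan is to deduce the almost everywhere convergence from the maximal bound of Proposition~\ref{prop:Mloc} by the standard scheme: pointwise convergence on a dense class, upgraded by a maximal inequality that controls the relevant oscillation. Two structural features of $\{M^{\a,\b}_t\}$ drive the argument. First, for $0<t<x/2$ the operator is \emph{local}: by the kernel description in \cite{CNR} the kernel $K^{\a,\b}_t(x,\cdot)$ is concentrated in the light cone $\{\,|x-z|\le t\le x+z\,\}$, so for $t<x/2$ only the values of $f$ on $z\in(x/2,3x/2)$ enter $M^{\a,\b}_t f(x)$. Second, $M^{\a,\b}_t$ is an approximate identity as $t\to 0^+$. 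Writing it through the modified Hankel transform $H_\a$ of order $\a$ (a self-reciprocal transform, $H_\a^{-1}=H_\a$), a direct computation identifies the kernel as that of
$$
M^{\a,\b}_t g = H_\a^{-1}\big(\widetilde m_{\a,\b}(t\,\cdot)\,H_\a g\big), \qquad \widetilde m_{\a,\b}(s)=2^{\a+\b}\Gamma(\a+\b+1)\,\frac{J_{\a+\b}(s)}{s^{\a+\b}},
$$
where $\widetilde m_{\a,\b}(0)=1$ and $\|\widetilde m_{\a,\b}\|_\infty<\infty$, the latter holding precisely because $\a+\b>-1/2$.

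First I would reduce to a global problem. Fix $0<a<b$; by locality it suffices to prove the convergence a.e.\ on $(a,b)$, and there $f$ may be replaced by its restriction $\tilde f=f\mathbf 1_J$ to any bounded interval $J\subset(0,\infty)$ with $\inf J>0$ and $J\supset(a/2,3b/2)$, in the sense that $M^{\a,\b}_t f(x)=M^{\a,\b}_t\tilde f(x)$ for $x\in(a,b)$ and $t<x/2$. Since $f$ is locally integrable (resp.\ locally $p$-integrable), the truncated datum satisfies $\tilde f\in L^1(\RR_+,dx)$ (resp.\ $\tilde f\in L^p(\RR_+,dx)$). Next I would establish pointwise convergence $M^{\a,\b}_t g(x)\to g(x)$ for every $x>0$ and every $g$ in a dense class $\mathcal D$, taking $\mathcal D=C_c^\infty(0,\infty)$, which is dense in $L^p(\RR_+,dx)$ and whose elements have rapidly decreasing Hankel transforms: on the Hankel side $\widetilde m_{\a,\b}(ty)\to\widetilde m_{\a,\b}(0)=1$ pointwise while the integrand is dominated by $\|\widetilde m_{\a,\b}\|_\infty$ times a function integrable in $y$, so dominated convergence gives $M^{\a,\b}_t g(x)\to H_\a^{-1}(H_\a g)(x)=g(x)$.

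Finally I would assemble the oscillation estimate. Put $\Phi f(x)=\limsup_{t\to0^+}\big|M^{\a,\b}_t f(x)-f(x)\big|$ and fix $g\in\mathcal D$ close to $\tilde f$ in the relevant norm. Since $\Phi g\equiv 0$ on $\mathcal D$ and, on $(a,b)$, $\Phi f=\Phi\tilde f$, the subadditivity of $\Phi$ yields $\Phi f\le M^{\a,\b}_{*,\textrm{tru}}(\tilde f-g)+|\tilde f-g|$ there. Chebyshev's inequality combined with Proposition~\ref{prop:Mloc} (the weak type $(1,1)$ bound when $\a+\b\ge1/2$, and the $L^p(\RR_+,dx)$ bound for $p$ obeying \eqref{cnd0} when $\a+\b<1/2$) then gives, for every $\la>0$,
$$
\big|\{x\in(a,b):\Phi f(x)>\la\}\big| \lesssim \la^{-1}\|\tilde f-g\|_{L^1(\RR_+,dx)} \quad\text{resp.}\quad \la^{-p}\|\tilde f-g\|_{L^p(\RR_+,dx)}^{\,p}.
$$
Letting $\|\tilde f-g\|\to0$ forces the left side to vanish for every $\la>0$, whence $\Phi f=0$ a.e.\ on $(a,b)$; since $a,b$ are arbitrary, the claimed convergence follows.

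Since the entire analytic weight has been carried by Proposition~\ref{prop:Mloc}, the remainder is routine, and there is no deep obstacle. The two steps deserving genuine care are the locality reduction, for which I would lean on the precise kernel support from \cite{CNR}, and the pointwise convergence on $\mathcal D$, namely justifying the Hankel-multiplier representation of $M^{\a,\b}_t$ and the passage to the limit under the integral sign; it is exactly here that the hypothesis $\a+\b>-1/2$ is used, through the boundedness of $\widetilde m_{\a,\b}$.
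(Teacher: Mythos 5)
Your proof is correct and follows the standard scheme the paper itself relies on (the paper gives no explicit proof of this corollary, deferring to \cite{DMO2} and \cite{CoCoSt}): locality of $M^{\a,\b}_t$ for $t<x/2$, pointwise convergence on the dense class $C_c^\infty(0,\infty)$ via the Hankel-multiplier representation, and a density/Chebyshev argument powered by the truncated maximal bounds of Proposition \ref{prop:Mloc} with $\delta=0$. The only imprecision is the claim that the kernel is supported in the light cone $\{|x-z|\le t\le x+z\}$ --- it is generically nonzero in $F=\{t>x+z\}$ as well --- but since $t<x/2$ and $z>0$ force $t<x+z$, the locality conclusion you actually use is unaffected.
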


Consequently, we obtain almost everywhere convergence to initial data results in contexts of differential problems whose
solutions express via $M^{\a,\b}_t$. In particular, this pertains to the following Cauchy initial-value problems,
see e.g.\ \cite[Section 7]{CNR}.
\begin{itemize}
\item[(i)] Euler-Poisson-Darboux equation in $\mathbb{R}^n$ with a locally integrable/locally in $L^p$ radial initial position
and null initial speed.
\item[(ii)] Wave equation in $\mathbb{R}^n$ with null initial position and a locally integrable
radial initial speed (this case was already treated before, see \cite[Section 5]{DMO2}).
\item[(iii)] Euler-Poisson-Darboux equation based on the one-dimensional Bessel operator with
a locally integrable/locally in $L^p$ initial position and null initial speed.
\item[(iv)] Wave equation based on the one-dimensional Bessel operator with null initial position and a locally
integrable/locally in $L^p$ initial speed.
\end{itemize}
In (i), (iii) and (iv) by ``locally integrable/locally in $L^p$'' we mean local integrability in case $\a+\b \ge 1/2$,
and being locally in $L^p$ for some $p$ satisfying condition \eqref{cnd0} in case $-1/2 < \a+\b < 1/2$.

\subsection*{Structure of the paper}
The rest of the paper is organized as follows. Section \ref{sec:prep} is a technical preparation for the proof
of Theorem \ref{thm:main}. It provides estimates of the integral kernel $K^{\a,\b}_t(x,z)$,
decomposition of the $(t,x,z)$ space into suitable regions and definitions of special auxiliary operators
together with description of their $L^p$-behavior. Section \ref{sec:proof} contains the proof of Theorem \ref{thm:main}.
The main part is preceded by establishing suitable control in terms of the special operators. At the end
of Section \ref{sec:proof} a brief justification of Proposition~\ref{prop:Mloc} is given.
Finally, in Section \ref{sec:counter} we construct suitable counterexamples in order to prove Proposition \ref{prop:sharp}.

\subsection*{Notation}
Throughout the paper we use a fairly standard notation.
Thus $\mathbb{N} = \{0,1,2,\ldots\}$ and $\mathbb{R}_+=(0,\infty)$.
The symbols ``$\vee$'' and ``$\wedge$'' mean the operations of taking maximum and minimum, respectively.
We write $X\lesssim Y$ to indicate that $X\leq CY$ with a positive constant $C$
independent of significant quantities. We shall write $X \simeq Y$ when simultaneously
$X \lesssim Y$ and $Y \lesssim X$.

For the sake of brevity, we often omit $\mathbb{R}_+$ when denoting $L^p$ spaces related to the measure spaces
$(\mathbb{R}_+,x^{\delta}dx)$ and $(\mathbb{R}_+,d\mu_{\a})$.
We write $L^p_{\textrm{rad}}(\ldots)$ for the subspace of $L^p(\ldots)$ consisting of radial functions.

\subsection*{Acknowledgment}
We thank the referee for a constructive criticism that improved the presentation and led us to
enrich the paper with the sharpness results.

\section{Technical preparation} \label{sec:prep}

Define the main regions
\begin{align*}
E & = \big\{(t,x,z) \in \RR_+^3 : |x-z| < t < x+z\big\}, \\
F & = \big\{(t,x,z) \in \RR_+^3 : x+z < t \big\}.
\end{align*}

Following the strategy used in \cite{DMO2}, see also \cite{DMO},
we shall estimate the kernel $K_t^{\a,\b}(x,z)$ and then split the bounds according to suitably defined
sub-regions of $E$ and $F$.
Then we will estimate the resulting maximal operators independently and get control in terms of 
a number of special operators whose mapping properties are essentially known.
Altogether, this will give an overall control of the maximal operator $M_*^{\a,\b}$.

In preparatory subsections \ref{ssec:21}--\ref{ssec:23} below we gather the kernel estimates, suitable decompositions
of $E$ and $F$ and definitions of the special operators, respectively.

\subsection{Kernel estimates} \label{ssec:21}

We first rephrase \cite[Theorem 3.3]{CNR} in a way more convenient for our present purposes; see also \cite[Section 2.3]{CNR}.
In particular, we neglect parts concerning sharpness of the bounds for certain $\a$ and $\b$. 

\begin{thm}[\cite{CNR}] \label{thm:kerest}
Assume that $\a > -1$ and $\a+\b > -1/2$. Let $t,x,z > 0$.
The kernel $K_t^{\a,\b}(x,z)$ vanishes when $t < |x-z|$, whereas in $E \cup F$ the following uniform estimates hold.
\begin{itemize}
\item[(1)] If $-\b \in \mathbb{N}$, then the kernel vanishes in $F$ and
$$
|K_t^{\a,\b}(x,z)| \lesssim 
				\frac{(xz)^{-2\a-\b}}{t^{2\a+2\b}}
				\Big( \big[ t^2-(x-z)^2\big] \big[(x+z)^2-t^2\big]\Big)^{\a+\b-1/2} \quad \textrm{in} \; E.
$$
\item[(2)]
If $\a+\b > 1/2$ and neither $-\b \in \mathbb{N}$ nor $2\a+\b=0$, then
$$
|K_t^{\a,\b}(x,z)| \lesssim
		\begin{cases}
			\frac{(xz)^{-\a-1/2}}{t^{2\a+2\b}} \big[t^2-(x-z)^2\big]^{\a+\b-1/2} & \textrm{in} \; E, \\
			\frac{1}{t^{2\a+2\b}} \big[t^2-(x-z)^2\big]^{\b-1} & \textrm{in} \; F.
		\end{cases}
$$
\item[(3)]
If $\a+\b > 1/2$ and $2\a+\b=0$, then
$$
|K_t^{\a,\b}(x,z)| \lesssim
	\frac{1}{t^{-2\a}} \Big( \big[ t^2-(x-z)^2 \big] \big| t^2-(x+z)^2\big| \Big)^{-\a-1/2} \quad \textrm{in} \; E \cup F.
$$
\item[(4)]
If $\a+\b=1/2$ and neither $-\b \in \mathbb{N}$ nor $\b=1$, then
$$
|K_t^{\a,\b}(x,z)| \lesssim
		\begin{cases}
			 \frac{(xz)^{-\a-1/2}}{t}
				\log\frac{8xz}{(x+z)^2-t^2} & \textrm{in} \; E, \\
			\frac{1}{t} \big[t^2-(x-z)^2\big]^{-\a-1/2} \log\Big( 2 \frac{t^2-(x-z)^2}{t^2-(x+z)^2}\Big) & \textrm{in} \; F.
		\end{cases}
$$
\item[(5)]
If $\a+\b=1/2$ and $\b=1$, then
$$
K_t^{\a,\b}(x,z) \simeq \frac{1}t \quad \textrm{in} \; E \cup F.
$$
\item[(6)]
If $\a + \b < 1/2$ and neither $-\b \in \mathbb{N}$ nor $\b=1$ nor $\a+1/2 \in \mathbb{N}$, then 
$$
|K_t^{\a,\b}(x,z)| \lesssim
		\begin{cases}
			 \frac{(xz)^{-2\a-\b}}{t^{2\a+2\b}}
				\Big( \big[ t^2-(x-z)^2\big] \big[(x+z)^2-t^2\big]\Big)^{\a+\b-1/2} & \textrm{in} \; E, \\
			\frac{1}{t^{2\a+2\b}} \big[t^2-(x-z)^2\big]^{-\a-1/2} \big[t^2-(x+z)^2\big]^{\a+\b-1/2} & \textrm{in} \; F.
		\end{cases}
$$
\item[(7)]
If $\a + \b < 1/2$ and $\b=1$, then
$$
|K_t^{\a,\b}(x,z)| \lesssim
			\begin{cases}
			 \frac{(xz)^{-2\a-1}}{t^{2\a+2}}
				\Big( \big[ t^2-(x-z)^2\big] \big[(x+z)^2-t^2\big]\Big)^{\a+1/2} & \textrm{in} \; E, \\
			\frac{1}{t^{2\a+2}} & \textrm{in} \; F.
		\end{cases}
$$
\item[(8)]
If $\a+\b < 1/2$ and $\a+1/2 \in \mathbb{N}$, then
$$
|K_t^{\a,\b}(x,z)| \lesssim
		\begin{cases}
			\frac{(xz)^{-\a-1/2}}{t^{2\a+2\b}} \big[t^2-(x-z)^2\big]^{\a+\b-1/2} & \textrm{in} \; E, \\
			\frac{1}{t^{2\a+2\b}} \big[t^2-(x-z)^2\big]^{-\a-1/2} \big[t^2-(x+z)^2\big]^{\a+\b-1/2} & \textrm{in} \; F.
		\end{cases}
$$
\end{itemize}
\end{thm}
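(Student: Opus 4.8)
The plan is to evaluate the defining oscillatory integral in closed form and then read off the pointwise bounds from the asymptotics of the resulting special function. After pulling out the prefactor $\frac{2^{\a+\b}\Gamma(\a+\b+1)}{t^{\a+\b}(xz)^{\a}}$, the heart of the matter is the triple-product Bessel integral
\[
\int_0^{\infty} J_{\a+\b}(ty)\, J_{\a}(xy)\, J_{\a}(zy)\, y^{1-\a-\b}\, dy,
\]
which is a classical Weber--Schafheitlin / Gegenbauer--Sonine integral. The first step is to invoke its known evaluation, expressing it through an associated Legendre (equivalently Gegenbauer or ${}_2F_1$) function whose argument is the normalized variable
\[
w = \frac{t^2 - x^2 - z^2}{2xz},
\]
i.e.\ (up to sign) the cosine of the angle opposite the side $t$ in a triangle with sides $t,x,z$. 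The three regimes $t<|x-z|$, $(t,x,z)\in E$, and $(t,x,z)\in F$ correspond exactly to $w<-1$, $-1<w<1$, and $w>1$.

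Since for $\a+\b\le 1/2$ the integral converges only conditionally, I would first establish the closed form on the half-plane $\a+\b>1/2$, where absolute convergence lets one apply the standard formula directly, and then analytically continue in $\b$. With the closed form in hand, the vanishing for $t<|x-z|$ follows because the degenerate triangle ($w<-1$) forces the Legendre factor to vanish. In region $E$ ($-1<w<1$) the triangle area $\tfrac14\big([t^2-(x-z)^2][(x+z)^2-t^2]\big)^{1/2}$, combined with the prefactor, produces the factor $\big([t^2-(x-z)^2][(x+z)^2-t^2]\big)^{\a+\b-1/2}$, while the Legendre function is smooth and bounded on compact subsets of $(-1,1)$; cases (1), (6), (8) then come from its uniform behavior up to the endpoints $w=\pm1$. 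In region $F$ ($w>1$) one switches to the second-kind Legendre function arising from the continuation to $w>1$, whose behavior near $w=1^+$ yields the factor $[t^2-(x+z)^2]^{\ldots}$ and whose decay controls the large-$t$ regime, giving the $F$-bounds in (2), (6), (8).

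The remaining cases are the degenerations of the special function at exceptional parameter values. When $-\b\in\NN$ the Legendre function reduces to a polynomial that vanishes for $w>1$, explaining why the kernel vanishes in $F$; when $\a+\b=1/2$ the two solutions collide and a logarithm appears, producing the log factors of cases (4) and (5); the values $2\a+\b=0$, $\b=1$, and $\a+1/2\in\NN$ are handled by the corresponding elementary or polynomial reductions, giving (3), (5), (7), (8). Each case is then a matter of substituting the appropriate limiting form and estimating it.

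The main obstacle is twofold. First, the conditional convergence for $\a+\b\le 1/2$ means I cannot bound the integrand absolutely and must justify the closed form and all endpoint estimates by analytic continuation or an Abel/Cesàro regularization. Second, and more delicate, is reading off the \emph{sharp} exponents uniformly near the shared boundary $t=x+z$ (the locus $w=1$ separating $E$ and $F$): there the special function changes analytic character, and obtaining the precise local expansions on both sides---so that the powers of $[t^2-(x-z)^2]$, $[(x+z)^2-t^2]$ and $[t^2-(x+z)^2]$ in the statement come out correctly and uniformly in $(t,x,z)$---requires careful use of the connection formulae and endpoint asymptotics of Legendre functions.
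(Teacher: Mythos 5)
This theorem is not proved in the present paper at all: it is a verbatim rephrasing of \cite[Theorem 3.3]{CNR} (with the sharpness assertions dropped), so there is no in-paper argument to compare you against. That said, your outline is essentially the strategy of the cited source: the triple Bessel integral is a Weber--Schafheitlin/Gegenbauer-type integral admitting a closed form in terms of associated Legendre (equivalently hypergeometric) functions of the triangle variable $w=(t^2-x^2-z^2)/(2xz)$, established first in the absolutely convergent half-plane $\a+\b>1/2$ and extended by analytic continuation in $\b$; the vanishing for $t<|x-z|$, the eight parameter cases, and the logarithms at $\a+\b=1/2$ all come from the endpoint behaviour and parameter degenerations of those Legendre functions, exactly as you describe. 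The caveat is that your proposal stops at naming the two genuinely delicate steps---justifying the continuation of the conditionally convergent integral and obtaining expansions at $w=1^{\pm}$ that are \emph{uniform} in $(t,x,z)$ rather than merely pointwise in $w$---so as written it is a correct plan rather than a proof; note also that case (5) asserts two-sided comparability, which your upper-bound-oriented scheme would need to supplement (harmlessly, since there the kernel reduces to an elementary expression).
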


The above theorem distinguishes cases, or rather segments and lines in the $(\a,\b)$ plane, where the estimates
are better comparing to other neighboring $(\a,\b)$. However, this does not seem to be reflected in power weighted
$L^p$, $1<p<\infty$, boundedness of $M^{\a,\b}_*$. This claim is based on our detailed analysis of the maximal operator
in each of the cases related to items (1)--(8) of Theorem \ref{thm:kerest}. Even though we could often obtain seemingly
better control of $K^{\a,\b}_*$ in terms of the auxiliary special operators, it led us to the same mapping properties
as stated in Theorem \ref{thm:main}. Therefore, we simplify things already at this stage and derive less accurate kernel bounds,
but with simpler structure (in particular, containing no logarithmic expressions), that will be sufficient for our purpose.
However, this strategy might not be appropriate for studying more subtle mapping properties of $M^{\a,\b}_*$, like
weak or restricted weak type estimates.

For notational convenience, define auxiliary kernels
\begin{align*}
\Phi_t^{\a,\b}(x,z) & = \chi_{E}(t,x,z) \frac{(xz)^{-\a-1/2}}{t^{2\a+2\b}} \big[t^2-(x-z)^2\big]^{\a+\b-1/2} \\
							& \qquad + \chi_{F}(t,x,z) \frac{1}{t^{2\a+2\b}} \big[t^2-(x-z)^2\big]^{\b-1},\\
\Psi_t^{\a,\b}(x,z) & = \chi_{E}(t,x,z) \frac{(xz)^{-2\a-\b}}{t^{2\a+2\b}}
				\Big( \big[ t^2-(x-z)^2\big] \big[(x+z)^2-t^2\big]\Big)^{\a+\b-1/2}\\
				& \qquad + \chi_{F}(t,x,z) \frac{1}{t^{2\a+2\b}} \big[t^2-(x-z)^2\big]^{-\a-1/2}
			\big[t^2-(x+z)^2\big]^{\a+\b-1/2}.
\end{align*}

\begin{thm} \label{thm:kermaj}
Assume that $\a > -1$ and $\a+\b > -1/2$. Fix an arbitrary $\varepsilon > 0$. The following estimates hold uniformly
in $(t,x,z) \in E \cup F$:
\begin{align*}
\big|K_t^{\a,\b}(x,z)\big| \lesssim
\begin{cases}
	\Phi_t^{\a,\b}(x,z), & \textrm{if} \;\; \a+\b > 1/2,\\
	\Psi_t^{\a,\b-\varepsilon}(x,z), & \textrm{if} \;\; \a+\b = 1/2,\\
	\Psi_t^{\a,\b}(x,z), & \textrm{if} \;\; \a+\b < 1/2,\\
	\Psi_t^{\a,\b}(x,z)\chi_{E}(t,x,z), & \textrm{if}\;\; -\b \in \mathbb{N}.
\end{cases}
\end{align*}
Moreover, $\chi_{E}(t,x,z) \Psi_t^{\a,\b}(x,z) \lesssim \chi_{E}(t,x,z) \Phi_t^{\a,\b}(x,z)$ when $\a + \b \ge 1/2$.
\end{thm}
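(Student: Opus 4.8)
The plan is to derive Theorem~\ref{thm:kermaj} directly from the sharp kernel bounds of Theorem~\ref{thm:kerest} by a sequence of elementary pointwise comparisons. Two geometric facts drive everything. Writing $A = t^2-(x-z)^2$ and $B = (x+z)^2-t^2$, on the region $E$ one has $A,B>0$ with $A+B = 4xz$, so that $B/(xz)\in(0,4)$ and, since $\frac{AB}{A+B}\le A\le t^2$, also $AB\le 4xz\,t^2$. On the region $F$ one has $t^2-(x+z)^2 < t^2-(x-z)^2$ and $t^2-(x+z)^2<t^2$, both ratios lying in $(0,1)$. I would record these at the outset, since every subsequent estimate reduces to them.

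First I would prove the \emph{moreover} assertion, as it is reused throughout. On $E$ the quotient $\Psi_t^{\a,\b}/\Phi_t^{\a,\b}$ equals $(B/(xz))^{\a+\b-1/2}$; since $B/(xz)\in(0,4)$ and $\a+\b-1/2\ge 0$, it is bounded by $4^{\a+\b-1/2}$, giving $\chi_E\Psi_t^{\a,\b}\lesssim\chi_E\Phi_t^{\a,\b}$. The same quotient with the reversed inequality shows $\chi_E\Phi_t^{\a,\b}\lesssim\chi_E\Psi_t^{\a,\b}$ when $\a+\b\le 1/2$, which I will invoke below.

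Next I dispose of the clause $-\b\in\mathbb{N}$: item (1) says the kernel vanishes on $F$ and is majorized on $E$ by exactly the $E$-part of $\Psi_t^{\a,\b}$, i.e.\ by $\chi_E\Psi_t^{\a,\b}$, which is the fourth clause; combined with the inter-kernel comparisons (and the $\e$-power bound used below) this also yields the first three clauses whenever they apply. For $-\b\notin\mathbb{N}$ I split on the sign of $\a+\b-1/2$. When $\a+\b>1/2$ the target is $\Phi_t^{\a,\b}$: item (2) equals it identically when $2\a+\b\ne 0$; when $2\a+\b=0$ (which forces $\b\in(1,2)$) item (3) coincides with $\Psi_t^{\a,\b}$ on $E$ — handled by the moreover — while on $F$ the required bound reduces to $\bigl(\frac{t^2-(x+z)^2}{t^2-(x-z)^2}\bigr)^{-\a-1/2}\lesssim 1$, an instance of the $F$-ratio fact. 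When $\a+\b<1/2$ the target is $\Psi_t^{\a,\b}$: item (6) equals it identically, while items (7) ($\b=1$) and (8) ($\a+1/2\in\mathbb{N}$) differ from $\Psi_t^{\a,\b}$ on only one of the two regions, where the discrepancy is again one of the elementary ratios (on $E$ I use $\chi_E\Phi_t^{\a,\b}\lesssim\chi_E\Psi_t^{\a,\b}$).

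The main obstacle is the critical line $\a+\b=1/2$, where items (4) and (5) carry logarithmic factors and the target is the perturbed kernel $\Psi_t^{\a,\b-\e}$. The point is that lowering $\b$ to $\b-\e$ inserts the factors $B^{-\e}$ on $E$ and $[t^2-(x+z)^2]^{-\e}$ on $F$, which must dominate the logarithmic blow-up as $t\to x+z$. After reducing the relevant quotients to scale-invariant form — they are homogeneous of degree zero under $(t,x,z)\mapsto(\la t,\la x,\la z)$ — the $E$-estimate for item (4) becomes $\bigl(\frac{AB}{xz\,t^2}\bigr)^{\e}\log\frac{8xz}{B}\lesssim 1$, which via $AB\le 4xz\,t^2$ and $B<4xz$ is controlled by $\sup_{0<r<1} r^{\e}\log(2/r)<\infty$; the $F$-estimate and the simpler, logarithm-free item (5) are entirely analogous. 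I expect this $\e$-absorption of the logarithm, together with checking that the reductions really collapse to a one-variable supremum, to be the only delicate part; all remaining cases are immediate once the two geometric facts are in hand.
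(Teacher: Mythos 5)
Your proposal is correct and follows essentially the same route as the paper: a case-by-case pointwise comparison of the bounds in Theorem \ref{thm:kerest} with $\Phi_t^{\a,\b}$ and $\Psi_t^{\a,\b}$ via the elementary ratio facts on $E$ and $F$ (including the reversed comparison $\chi_E\Phi\lesssim\chi_E\Psi$ for $\a+\b\le 1/2$), together with absorbing the logarithms on the critical line $\a+\b=1/2$ into the $\e$-perturbed power, exactly as in the paper's use of $\log y\lesssim y^{\e}$ for $y$ bounded away from zero. The only cosmetic difference is organizational (frontloading the two geometric identities $A+B=4xz$ and the $F$-ratios), and your reduction of the $E$-case on the critical line to $\sup_{0<r<1}r^{\e}\log(2/r)$ implicitly uses $A\le t^{2}$ to get $\tfrac{AB}{xzt^{2}}\le\tfrac{B}{xz}$, which you have already recorded.
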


\begin{proof}
All the asserted bounds are straightforward consequences of Theorem \ref{thm:kerest}.
Clearly,
$$
|K_t^{\a,\b}(x,z)| \lesssim \chi_{E}(t,x,z) \Psi_t^{\a,\b}(x,z) \quad \textrm{if} \;\; -\b \in \mathbb{N},
$$
for $(t,x,z) \in E \cup F$, is just Theorem \ref{thm:kerest}(1). We can write
$$
\chi_{E}(t,x,z) \Psi_t^{\a,\b}(x,z) = \chi_{E}(t,x,z) \Phi_t^{\a,\b}(x,z) \bigg[ \frac{(x+z)^2-t^2}{xz}\bigg]^{\a+\b-1/2}
$$
and, since the expression in square brackets is bounded from above by a constant, we infer that
\begin{equation} \label{maj1}
\chi_{E}(t,x,z)\Psi_t^{\a,\b}(x,z) \lesssim \chi_E(t,x,z) \Phi_t^{\a,\b}(x,z) \quad \textrm{when}\;\; \a+\b \ge 1/2.
\end{equation}

Next, assume that $\a+\b > 1/2$ and $2\a+\b=0$ (see Theorem \ref{thm:kerest}(3)), which forces $\a < -1/2$.
For $(t,x,z) \in E \cup F$ we have
\begin{align*}
 \frac{1}{t^{-2\a}} \Big( \big[ t^2-(x-z)^2 \big] \big| t^2-(x+z)^2\big| \Big)^{-\a-1/2}
& = \chi_E(t,x,z) \Phi_t^{\a,\b}(x,z) \bigg[ \frac{xz}{(x+z)^2-t^2}\bigg]^{\a+1/2} \\
	& \quad + \chi_F(t,x,z) \Phi_t^{\a,\b}(x,z) \bigg[ \frac{t^2-(x-z)^2}{t^2-(x+z)^2}\bigg]^{\a+1/2}.
\end{align*}
Here the expressions in square brackets are bounded from below by a positive constant. Hence, in view
of Theorem \ref{thm:kerest}(1)--(3) and \eqref{maj1}, we see that
$$
|K_t^{\a,\b}(x,z)| \lesssim \Phi_t^{\a,\b}(x,z) \quad \textrm{if} \;\; \a+\b > 1/2.
$$

Let now $\a+\b=1/2$ and fix $\varepsilon > 0$. For $(t,x,z) \in E$ we have (see Theorem \ref{thm:kerest}(4))
\begin{align*}
& \frac{(xz)^{-\a-1/2}}{t} \log\frac{8xz}{(x+z)^2-t^2} \\
& \quad \lesssim \frac{(xz)^{-\a-1/2}}{t} \bigg[ \frac{(x+z)^2-t^2}{xz}\bigg]^{\a+\b-\varepsilon-1/2} \\
& \quad = \Psi_t^{\a,\b-\varepsilon}(x,z) \bigg[ \frac{t^2}{t^2-(x-z)^2}\bigg]^{\a+\b-\varepsilon-1/2}
	\le \Psi_t^{\a,\b-\varepsilon}(x,z),
\end{align*}
where we used the fact that $\log y$ grows slower than the positive power $y^{-(\a+\b-\varepsilon-1/2)}$
(here $y$ positive and separated from zero). Further, for $(t,x,z) \in F$,
\begin{align*}
& \frac{1}t \big[ t^2-(x-z)^2\big]^{-\a-1/2} \log\bigg( 2\frac{t^2-(x-z)^2}{t^2-(x+z)^2}\bigg) \\
& \quad \lesssim \frac{1}t \big[ t^2-(x-z)^2\big]^{-\a-1/2}
	\bigg[\frac{t^2-(x+z)^2}{t^2-(x-z)^2}\bigg]^{\a+\b-\varepsilon-1/2} \\
& \quad = \Psi_t^{\a,\b-\varepsilon}(x,z) \bigg[ \frac{t^2}{t^2-(x-z)^2}\bigg]^{\a+\b-\varepsilon-1/2}
	\le \Psi_t^{\a,\b-\varepsilon}(x,z).
\end{align*}
This shows that (see Theorem \ref{thm:kerest}(4),(5))
$$
|K_t^{\a,\b}(x,z)| \lesssim \Psi_t^{\a,\b-\varepsilon}(x,z) \quad
	\textrm{if} \;\; \a+\b =1/2
$$
for $(t,x,z) \in E\cup F$
(to be precise, the case coming from Theorem \ref{thm:kerest}(5), i.e.\ $(\a,\b)=(-1/2,1)$,
is trivially included with no logarithms involved).

Finally, we consider $\a+\b < 1/2$, see Theorem \ref{thm:kerest}(6)--(8). If $\b=1$ then automatically $\a<-1/2$ and
for $(t,x,z) \in F$ we have (see Theorem \ref{thm:kerest}(7))
$$
\frac{1}{t^{2\a+2}} = \Psi_t^{\a,\b}(x,z) \bigg[ \frac{t^2-(x-z)^2}{t^2-(x+z)^2}\bigg]^{\a+1/2}
	\le \Psi_t^{\a,\b}(x,z).
$$
If $\a+1/2 \in \mathbb{N}$ then for $(t,x,z) \in E$ (see Theorem \ref{thm:kerest}(8))
$$
\frac{(xz)^{-\a-1/2}}{t^{2\a+2\b}} \big[ t^2-(x-z)^2 \big]^{\a+\b-1/2} =
	\Psi_t^{\a,\b}(x,z) \bigg[ \frac{(x+z)^2-t^2}{xz}\bigg]^{-(\a+\b-1/2)}.
$$
Altogether, this implies for $(t,x,z) \in E \cup F$
$$
|K_t^{\a,\b}(x,z)| \lesssim \Psi_t^{\a,\b}(x,z) \quad \textrm{if} \;\; \a+\b < 1/2.
$$

The proof of Theorem \ref{thm:kermaj} is complete.
\end{proof}

\subsection{Decompositions of $E$ and $F$} \label{ssec:22}

Inspired by the analysis presented in \cite{DMO2}, we define the following sub-regions of $E$ and $F$,
see Figure \ref{fig_reg} below,
\begin{align*}
E_1 = & E \cap \big\{(t,x,z) \in \RR_+^3 : t < x/2 \big\}, \\
E_2 = & E \cap \big\{(t,x,z) \in \RR_+^3 : x/2 \le t < 3x \big\}, \\
E_3 = & E \cap \big\{(t,x,z) \in \RR_+^3 : 3x \le t \big\}
\end{align*}
and
\begin{align*}
F_1 = & F \cap \big\{(t,x,z) \in \RR_+^3 : x < t < 3x \big\}, \\
F_2 = & F \cap \big\{(t,x,z) \in \RR_+^3 : 3x \le t \big\}, \\
F'_2 & = F_2 \cap \big\{(t,x,z) \in \RR_+^3 : z < (t-x)/2 \big\}, \\
F''_2 & = F_2 \cap \big\{(t,x,z) \in \RR_+^3 : (t-x)/2 \le z < t-x \big\}.
\end{align*}
Clearly, $E=E_1\cup E_2 \cup E_3$, $F = F_1 \cup F_2$, $F_2 = F'_2 \cup F''_2$, all the sums being disjoint.

\begin{figure}
\centering
\begin{tikzpicture}[scale=1.5]
\draw[arrows=-angle 60] (0,0) -- (0,5.5);
\draw[arrows=-angle 60] (0,0) -- (5.5,0);
\node at (-0.15,-0.18) {$0$};
\node at (0.12,5.4) {$t$};
\node at (5.4,0.13) {$z$};
\draw[very thin] (0.5,-0.05) -- (0.5,0.05);
\node at (0.5,-0.2) {$x/2$};
\draw[very thin] (1,-0.05) -- (1,0.05);
\node at (1,-0.2) {$x$};
\draw[very thin] (1.5,-0.05) -- (1.5,0.05);
\node at (1.5,-0.2) {$3x/2$};
\draw[very thin] (2,-0.05) -- (2,0.05);
\node at (2,-0.2) {$2x$};
\draw[very thin] (3,-0.05) -- (3,0.05);
\node at (3,-0.2) {$3x$};
\draw[very thin] (4,-0.05) -- (4,0.05);
\node at (4,-0.2) {$4x$};
\draw[very thin] (5,-0.05) -- (5,0.05);
\node at (5,-0.2) {$5x$};
\draw[very thin] (-0.05,0.5) -- (0.05,0.5);
\node at (-0.2,0.5) {$\frac{x}2$};
\draw[very thin] (-0.05,1) -- (0.05,1);
\node at (-0.2,1) {$x$};
\draw[very thin] (-0.05,2) -- (0.05,2);
\node at (-0.2,2) {$2x$};
\draw[very thin] (-0.05,3) -- (0.05,3);
\node at (-0.2,3) {$3x$};
\draw[very thin] (-0.05,4) -- (0.05,4);
\node at (-0.2,4) {$4x$};
\draw[very thin] (-0.05,5) -- (0.05,5);
\node at (-0.2,5) {$5x$};
\fill[black!5!white] (0,1) -- (1,0) -- (5,4) -- (5,5) -- (4,5) -- cycle;
\fill[black!20!white] (0,1) -- (4,5) -- (0,5) -- cycle;
\draw[thick] (0,1) -- (4,5);
\draw[thick] (1,0) -- (5,4);
\draw[thick] (0,1) -- (1,0);
\draw[thick] (0,3) -- (4,3);
\draw[thick] (1,3) -- (2,5);
\draw[thick] (0.5,0.5) -- (1.5,0.5);
\node at (0.7,4) {$\mathbf{F_2'}$};
\node at (2.2,4) {$\mathbf{F_2''}$};
\node at (4.1,4) {$\mathbf{E_3}$};  
\node at (0.7,2.4) {$\mathbf{F_1}$}; 
\node at (1.7,1.6) {$\mathbf{E_2}$}; 
\node at (1.03,0.33) {$\mathbf{E_1}$};
\draw[very thin, dashed] (0,0.5) -- (0.5,0.5) -- (0.5,0);
\draw[very thin, dashed] (1,0) -- (1,3);
\draw[very thin, dashed] (1.5,0) -- (1.5,0.5);
\draw[very thin, dashed] (2,0) -- (2,3);
\draw[very thin, dashed] (4,0) -- (4,3);
\node[rotate=-45] at (0.4,0.4) {$z=x-t$};
\node[rotate=45] at (4.5,3.3) {$z=t+x$};
\node[rotate=45] at (3.5,4.3) {$z=t-x$};
\node[rotate=63.43] at (1.4,4.3) {$z=\frac{t-x}{2}$}; 
\draw[arrows=-angle 60] (0,0) -- (0,5.5);
\end{tikzpicture} 

\caption{Sections of regions $E_i$ and $F_i$ given $x>0$ fixed.} \label{fig_reg}
\end{figure}
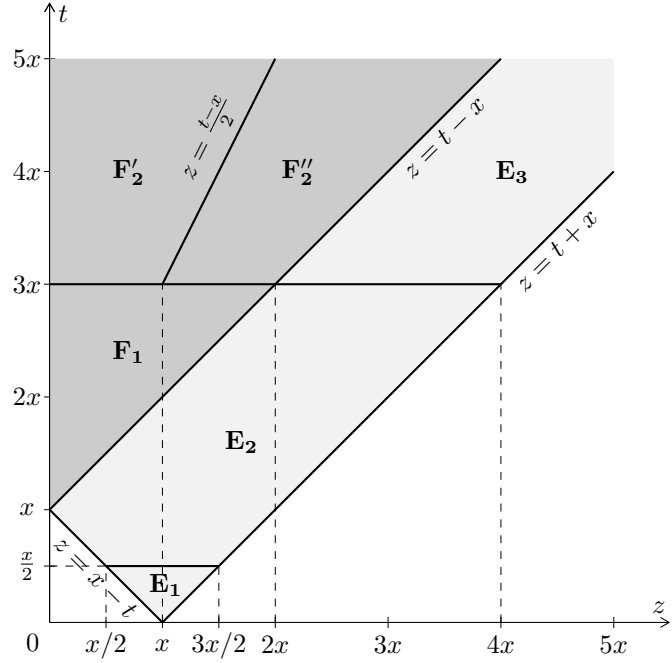

In what follows we will use uniform estimates of expressions in $t,x,z$ that hold specifically in one (or more) of the above
regions, cf.\ \cite{DMO2}.
We list them below for an easy further reference. Verification of these relations is straightforward and left
to the reader.
\begin{align}
\textrm{in}\; E_1: \qquad & x \simeq z, \label{relE1}\\
\textrm{in}\; E_2: \qquad & t^2-(x-z)^2 \lesssim tz, \label{relE} \\ 
\textrm{in}\; E_3: \qquad & t^2-(x-z)^2 \lesssim tx, \quad z \simeq t, \label{relE4}\\
\textrm{in}\; F_1: \qquad & t^2-(x-z)^2 \simeq x(t-x) \simeq x(t-x+z), \quad t^2-(x+z)^2 \simeq x(t-x-z), \label{relF1}\\
\textrm{in}\; F_2: \qquad & t^2-(x-z)^2 \simeq t(t+x-z), \quad t^2-(x+z)^2 \simeq t(t-x-z), \label{relF2}\\
\textrm{in}\; F'_2: \qquad & t^2-(x\pm z)^2 \simeq t^2, \label{relF2prim}\\
\textrm{in}\; F''_2: \qquad & z \simeq t \simeq t-x \simeq t - (x-z), \quad t+x-z \simeq t-z.\label{relF2bis}
\end{align}

\subsection{Special operators} \label{ssec:23}

We will use the following auxiliary operators whose mapping properties are essentially
known, see \cite{DMO,DMO2} and also references given there.

\noindent \textbf{A variant of the Hardy-Littlewood maximal operator} (denoted by $E_2$ in \cite{DMO})
$$
Df(x) = \sup_{0 \le a < x < b} \frac{1}{b^2-a^2} \int_a^b z |f(z)| dz.
$$
The operator $D$ is bounded on $L^p(x^{\gamma}dx)$, $1<p<\infty$,
if and only if $-1 < \gamma < 2p-1$.

\noindent \textbf{The local Hardy--Littlewood maximal operator}
$$
Lf(x)=\sup_{t<x/2}\frac{1}{2t}\int_{x-t}^{x+t}|f(z)|\,dz.
$$
The operator $L$ is bounded on $L^p(x^{\gamma}dx)$, $1<p<\infty$, for any $\gamma\in \RR$.

\noindent \textbf{The Hardy type operator}
$$
H_{\eta}f(x)=\frac{1}{x^{\eta}}\int_{0}^{x}z^{\eta-1}f(z)\,dz.
$$
The operator $H_{\eta}$ is bounded on $L^p(x^{\gamma}dx)$, $1<p<\infty$, if and only if $\gamma<\eta p-1$.

\noindent \textbf{The remote maximal operator}
$$
Rf(x)=\sup_{t>2x}\frac{1}{2x}\int_{t-x}^{t+x}|f(z)|\,dz.
$$
The operator $R$ is bounded on $L^p(x^{\gamma}dx)$, $1<p<\infty$, if and only if $\gamma\ge0$.

\noindent \textbf{The maximal operator $\mathbf{N}$}. For positive $\eta$, let
$$
N_{\eta}f(x)=\sup_{t>x}\frac{1}{t^{\eta}}\int_{0}^{t}z^{\eta-1}|f(z)|\,dz.
$$
The operator $N_{\eta}$ is bounded on $L^p(x^{\gamma}dx)$, $1<p<\infty$, if  $-1<\gamma<\eta p-1$.
At this point it is perhaps in order to note that the range given in \cite[p.\,1545]{DMO2} for the boundedness
of $N_{\eta}$ is wrong; consequently some statements in the proof of
\cite[Theorem 4.3]{DMO2} are not correct, nevertheless the result claimed there is true.

\noindent \textbf{The maximal operator $\mathbf{T}$}. For \textit{any} $\eta \in \RR$, consider the operator
\begin{equation}
\label{eq:Tb}
T_\eta f(x)=\sup_{t>2x}\int_{t/2}^{t}\frac{z^{\eta-1}|f(z)|}{(t-z+x)^{\eta}}\,dz.
\end{equation}
Boundedness of $T_{\eta}$ on power weighted $L^p$ spaces is shown in the following lemma, which is a simple extension
of \cite[Lemma 2.1]{DMO2}. Actually, this is even more than we need, we state item (b) only for the sake of completeness.

\begin{lema} \label{lem:T}
Let $1<p<\infty$. 
\begin{enumerate}
\item[(a)] For $\eta>1$, $T_{\eta}$ is bounded on $L^p(x^{\gamma}dx)$ for $\gamma\ge (\eta-1)p$.
\item[(b)] For $\eta = 1$, $T_{\eta}$ is bounded on $L^p(x^{\gamma}dx)$ for $\gamma > 0$.
\item[(c)] For $0 < \eta < 1$, $T_{\eta}$ is bounded on $L^p(x^{\gamma}dx)$ for $\gamma\ge \eta-1$.
\item[(d)] For $\eta\le 0$, $T_{\eta}$ is bounded on $L^p(x^{\gamma}dx)$ for $\gamma>-1$.
\end{enumerate} 
\end{lema}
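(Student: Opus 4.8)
The plan is to dominate $T_\eta f$ pointwise by a combination of the remote maximal operator $R$ from Section~\ref{ssec:23} and an elementary windowed average, and then to read off the admissible $\gamma$ from their mapping properties together with a scaling argument. Throughout I fix $1<p<\infty$ and set $h(z)=z^{\eta-1}|f(z)|$, so that $\|h\|_{L^p(x^{\mu}dx)}=\|f\|_{L^p(x^{\mu+(\eta-1)p}dx)}$ for every $\mu\in\RR$. The starting observation is that for $t>2x$ and $z\in(t/2,t)$ one has $t-z+x\simeq\max(x,t-z)$, since $t-z+x\ge x$, $t-z+x\ge t-z$, and $t-z+x\le 2\max(x,t-z)$. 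I would therefore split the defining integral in \eqref{eq:Tb} at $z=3t/4$ into a \emph{bulk part} $z\in(t/2,3t/4)$, where $t-z\simeq t$ and hence $(t-z+x)^{\eta}\simeq t^{\eta}$, and a \emph{near-diagonal part} $z\in(3t/4,t)$, where $t-z\le t/4$.

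For the near-diagonal part I would decompose dyadically according to the size of $t-z+x$: on the shell where $t-z+x\simeq 2^{k}x$, $k\ge0$, the kernel is $\simeq(2^{k}x)^{-\eta}$, the corresponding $z$-interval has length $\simeq 2^{k}x$ and lies within distance $\simeq2^{k}x$ of $z=t$, and $2^{k}x\lesssim t/4$, so this interval is remote in the sense required by $R$. Taking the supremum over $t$ and using that the supremum of a sum is at most the sum of suprema, each shell is controlled by $R$ applied to $h$ at scale $2^{k+1}x$:
\begin{equation*}
\sup_{t>2x}(2^{k}x)^{-\eta}\int_{\{z:\,t-z+x\simeq 2^{k}x\}} h(z)\,dz \lesssim 2^{k(1-\eta)}x^{1-\eta}\,Rh(2^{k+1}x).
\end{equation*}
Taking the $L^p(x^{\gamma}dx)$ norm and summing by the triangle inequality, the dilation $y=2^{k+1}x$ turns $\|x^{1-\eta}Rh(2^{k+1}x)\|_{L^p(x^{\gamma})}$ into $2^{-(k+1)[(1-\eta)p+\gamma+1]/p}\|Rh\|_{L^p(y^{\gamma+(1-\eta)p})}$. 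The boundedness of $R$, valid precisely for nonnegative exponent, then requires $\gamma+(1-\eta)p\ge0$, i.e. $\gamma\ge(\eta-1)p$, and returns $\|f\|_{L^p(x^{\gamma})}$. The crucial point is that the growth factor $2^{k(1-\eta)}$ cancels against the scaling factor, leaving the geometric series $\sum_{k\ge0}2^{-k(\gamma+1)/p}$, which converges exactly when $\gamma>-1$.

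For the bulk part, $(t-z+x)^{\eta}\simeq t^{\eta}$ and $z\simeq t$ reduce the contribution, after replacing $z^{\eta-1}$ by $t^{\eta-1}$, to the windowed average $\sup_{t>2x}t^{-1}\int_{t/2}^{t}|f(z)|\,dz$; this is \emph{not} covered by the listed operators for large $\gamma$ (e.g.\ $D$ carries the spurious upper bound $\gamma<2p-1$), so I would prove its $L^p(x^{\gamma}dx)$ boundedness for all $\gamma>-1$ directly: weighted H\"older bounds the average at height $t$ by $t^{-(1+\gamma)/p}\big(\int_{t/2}^{t}|f|^{p}z^{\gamma}\big)^{1/p}$, after which discretizing $x$ and $t$ in powers of $2$ and summing a geometric series—again convergent precisely for $\gamma>-1$—gives the bound by $\|f\|_{L^p(x^{\gamma})}$. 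Collecting the two parts, $T_\eta$ is bounded on $L^p(x^{\gamma}dx)$ whenever $\gamma>-1$ and $\gamma\ge(\eta-1)p$, and it only remains to check that each hypothesis implies both: for $\eta>1$ the assumption $\gamma\ge(\eta-1)p$ already forces $\gamma>0>-1$; for $\eta\le0$ the assumption $\gamma>-1$ gives $\gamma\ge(\eta-1)p$ since $(\eta-1)p\le-p<-1$; for $0<\eta<1$ the elementary $(\eta-1)\ge(\eta-1)p$ shows $\gamma\ge\eta-1$ yields both; and for $\eta=1$ the assumption $\gamma>0$ is stronger than needed. The main obstacle is the near-diagonal part: one must arrange the dyadic-shell bookkeeping so that the supremum in $t$ is absorbed by $R$ at the correct scale and so that the scaling exponents cancel to leave a convergent series—this cancellation, together with the endpoint boundedness of $R$ at exponent $0$, is exactly what produces the threshold $\gamma\ge(\eta-1)p$ and the decisive role of $\gamma>-1$.
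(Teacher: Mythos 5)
Your argument is correct in substance, but it takes a genuinely different route from the paper's. The paper does not prove items (a)--(c) at all: it simply cites \cite[Lemma 2.1]{DMO2} for (a) and (c), notes that (b) is commented there, and proves only (d), in one line, by observing that for $\eta\le 0$, $t>2x$ and $z\in(t/2,t)$ one has $(t-z+x)/z\le 2$, whence $T_\eta f(x)\lesssim \int_x^\infty |f(z)|z^{-1}\,dz$, the dual Hardy operator, bounded on $L^p(x^\gamma dx)$ precisely for $\gamma>-1$. Your proof is self-contained: the split at $z=3t/4$, the reduction of the bulk to a windowed average (which, incidentally, is itself dominated by that same dual Hardy operator via $t^{-1}\le z^{-1}$, so your H\"older-plus-dyadic argument there can be shortcut), and the dyadic shells in $t-z+x$ matched against $R$ with the cancellation $2^{k(1-\eta)}\cdot 2^{-k[(1-\eta)p+\gamma+1]/p}=2^{-k(\gamma+1)/p}$ are all sound. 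Two remarks. First, a bookkeeping point: the definition of $R$ requires the window $[s-y,s+y]$ to satisfy $s>2y$, and for the lowest shells $k=0,1$ the nonemptiness of the shell only guarantees $t\gtrsim 2^{k+1}x$ rather than the margin you need; you should either treat those finitely many shells directly (there $z\simeq t\simeq x$ and the contribution reduces to a single $R$-type or local average term) or remark that replacing the threshold $s>2y$ by $s>cy$ with any $c>1$ does not change the mapping properties of $R$. Second, and more interestingly, your unified sufficient condition ``$\gamma>-1$ and $\gamma\ge(\eta-1)p$'' is \emph{strictly stronger} than item (c) as stated, since $(\eta-1)p<\eta-1$ for $0<\eta<1$ and $p>1$; it also matches the lower bounds one extracts from test functions $\chi_{(N,N+1)}$. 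Because Remark \ref{rem:sharp} explicitly leaves open whether Lemma \ref{lem:T} is sharp, and this is precisely the source of the gap between Theorem \ref{thm:main}(b) and Proposition \ref{prop:sharp}, you should scrutinize that regime with particular care; if it survives, it is an improvement worth recording rather than a mere reproof.
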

\begin{proof}
Items (a) and (c) are proved in \cite{DMO2}, while (b) is commented there.

To show (d), assume that $\eta \le 0$ and observe that
$$
T_{\eta}f(x) = \sup_{t > 2x} \int_{t/2}^t \bigg( \frac{t-z+x}{z}\bigg)^{-\eta} \frac{|f(z)|}{z}\, dz
	\lesssim \sup_{t > 2x} \int_{t/2}^{t} \frac{|f(z)|}{z}\, dz \le \int_x^{\infty} \frac{|f(z)|}{z}\, dz.
$$
Thus $T_{\eta}$ is controlled in terms of the dual Hardy operator $f(x) \mapsto \int_x^{\infty} \frac{f(z)}{z}dz$, which is
well known to be bounded on $L^p(x^{\gamma}dx)$ if and only if $\gamma > -1$; see e.g.\ \cite[Lemma 6.3]{CNR}.
\end{proof}

\section{Proof of the main theorem} \label{sec:proof}

From now on we will consider only non-negative functions $f$. This will be enough, since our approach is based
on absolute estimates of the kernel.

Let $G \subset \RR_+^3$. We denote
\begin{align*}
\Phi^{\a,\b}_{*,G}f(x) & = \sup_{t >0} \int_0^{\infty} \chi_G(t,x,z) \Phi_t^{\a,\b}(x,z) f(z)\, d\mu_{\a}(z), \\
\Phi^{\a,\b}_* f(x) & = \Phi^{\a,\b}_{*,\mathbb{R}^3_+}f(x)
\end{align*}
and analogously for the $\Psi$ counterparts. Clearly, $\Phi^{\a,\b}_* f \le \Phi^{\a,\b}_{*,E}f + \Phi^{\a,\b}_{*,F}f$ and
similarly in case of the $\Psi$ operators.

\subsection{Control of $\Phi^{\a,\b}_*$} \label{ssec:31}
In this subsection we will establish a control of $\Phi^{\a,\b}_*$ in terms of the special operators defined in
Section \ref{ssec:23}. This will be done under the assumption $\a+\b \ge 1/2$.
\begin{lema} \label{lem:ctrlPhi}
Let $\a > -1$ and $\a+\b \ge 1/2$. Then
\begin{align*}
\Phi^{\a,\b}_{*,E}f(x) & \lesssim Lf(x) + H_{2\a+\b+1}f(4x) + x^{\b} R\big(z^{-\b}f\big)(x), \\
\Phi^{\a,\b}_{*,F}f(x) & \lesssim \chi_{\{\b \ge 1\}} H_{2\a+2}f(2x) + \chi_{\{\b < 1\}} H_{2\a+\b+1}f(2x)
					+ N_{2\a+2}f(x) + T_{1-\b}f(x),
\end{align*}
uniformly in $f \ge 0$ and $x > 0$.
\end{lema}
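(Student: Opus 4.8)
The plan is to substitute the explicit form of $\Phi_t^{\a,\b}(x,z)$ together with the measure $d\mu_\a(z)=z^{2\a+1}\,dz$, and to estimate the resulting integral region by region, using the decompositions $E=E_1\cup E_2\cup E_3$ and $F=F_1\cup F_2'\cup F_2''$ from Subsection~\ref{ssec:22}. On each piece the elementary relations \eqref{relE1}--\eqref{relF2bis} collapse the kernel to a pure power of the relevant variables, after which the corresponding integral is recognized as one of the special operators of Subsection~\ref{ssec:23}. Since we may assume $f\ge0$, all bounds are pointwise. The hypothesis $\a+\b\ge 1/2$ enters crucially on $E$: there the exponent $\a+\b-1/2$ is nonnegative, so the upper bounds for $t^2-(x-z)^2$ furnished by \eqref{relE} and \eqref{relE4} may be raised to this power, and likewise $t^2-(x-z)^2\le t^2$ may be used on $E_1$.

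For $\Phi^{\a,\b}_{*,E}$ I would treat the three sub-regions separately. On $E_1$ one has $x\simeq z$ by \eqref{relE1} and $t<x/2$, so $(xz)^{-\a-1/2}z^{2\a+1}\simeq (z/x)^{\a+1/2}\simeq 1$ and $[t^2-(x-z)^2]^{\a+\b-1/2}\le t^{2\a+2\b-1}$; the kernel then collapses to $t^{-1}$, and integrating over $z\in(x-t,x+t)$ yields exactly $Lf(x)$. On $E_2$, where $t\simeq x$ and $z\lesssim x$, relation \eqref{relE} bounds the integrand by $x^{-2\a-\b-1}z^{2\a+\b}f(z)$, and integration over $(0,4x)$ produces $H_{2\a+\b+1}f(4x)$. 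On $E_3$, using $z\simeq t$ and $t^2-(x-z)^2\lesssim tx$ from \eqref{relE4}, the integrand is controlled by $x^{\b-1}t^{-\b}f(z)$; replacing $t^{-\b}\simeq z^{-\b}$ on the range $z\in(t-x,t+x)$ gives $x^{\b}R(z^{-\b}f)(x)$. Summing the three contributions delivers the first asserted bound.

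For $\Phi^{\a,\b}_{*,F}$ the kernel is $t^{-2\a-2\b}[t^2-(x-z)^2]^{\b-1}$, and the sign of $\b-1$ dictates which form in \eqref{relF1} to exploit. On $F_1$, where $t\simeq x$ and $z<t-x<2x$, I would write $t^2-(x-z)^2\simeq x(t-x)\simeq x(t-x+z)$: for $\b\ge1$ the bound $(t-x)^{\b-1}\lesssim x^{\b-1}$ gives the integrand $\lesssim x^{-2\a-2}z^{2\a+1}f(z)$, hence $H_{2\a+2}f(2x)$; for $\b<1$ the inequality $(t-x+z)^{\b-1}\le z^{\b-1}$ (valid since $t-x+z\ge z$ and $\b-1<0$) gives $\lesssim x^{-2\a-\b-1}z^{2\a+\b}f(z)$, hence $H_{2\a+\b+1}f(2x)$. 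On $F_2'$, relation \eqref{relF2prim} reduces the kernel to $t^{-2\a-2}z^{2\a+1}$, and integration over $(0,t)$ produces $N_{2\a+2}f(x)$.

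The delicate point, which I expect to be the main obstacle, is $F_2''$. There \eqref{relF2} gives $t^2-(x-z)^2\simeq t(t+x-z)$, and with $z\simeq t$ from \eqref{relF2bis} the integrand becomes comparable to $z^{-\b}(t+x-z)^{\b-1}f(z)$ integrated over $z\in[(t-x)/2,t-x)$. This is the correct shape for $T_{1-\b}$, but the interval of integration is not $(t/2,t)$, so $T_{1-\b}$ cannot be applied with parameter $t$. The remedy I would use is to evaluate the supremum defining $T_{1-\b}$ at the value $s=t-x$, which satisfies $s>2x$ precisely because $t\ge 3x$: then $[(t-x)/2,t-x)=[s/2,s)$ matches the integration range exactly, while the equivalence $t+x-z\simeq t-z=s-z+x$ from \eqref{relF2bis} turns $(t+x-z)^{\b-1}$ into $(s-z+x)^{-(1-\b)}$, the exact weight appearing in \eqref{eq:Tb} with $\eta=1-\b$. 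This identifies the $F_2''$ contribution with $T_{1-\b}f(x)$ and completes the estimate for $\Phi^{\a,\b}_{*,F}$.
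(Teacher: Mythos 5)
Your proposal is correct and follows essentially the same route as the paper's proof: the same decomposition $E=E_1\cup E_2\cup E_3$, $F=F_1\cup F_2'\cup F_2''$, the same use of relations \eqref{relE1}--\eqref{relF2bis} with the sign of $\a+\b-1/2$ (resp.\ $\b-1$) dictating how the kernel collapses, and the same identification of each piece with $L$, $H_{2\a+\b+1}$, $x^{\b}R(z^{-\b}\cdot)$, $H_{2\a+2}$, $N_{2\a+2}$ and $T_{1-\b}$. In particular, your handling of $F_2''$ via the shift $s=t-x$ (so that the range $[(t-x)/2,t-x)$ and the weight $(t-z)^{\b-1}=(s-z+x)^{-(1-\b)}$ match the definition of $T_{1-\b}$) is exactly the reparametrization the paper uses.
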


\begin{proof}
We will bound the maximal operators related to restrictions of $\Phi^{\a,\b}_t(x,z)$ to the sub-regions of $E$ and $F$
specified in Section \ref{ssec:22}.

\noindent{\textbf{Region $E_1$.}}
In view of \eqref{relE1}, taking into account that $\a+\b-1/2\ge 0$, we have
\begin{align*}
\Phi^{\a,\b}_{*,E_1}f(x) & \lesssim
\sup_{t<x/2}\frac{x^{-\a-1/2}}{t^{2\a+2\b}}\int_{x-t}^{x+t}\big[t^2-(x-z)^2\big]^{\a+\b-1/2}f(z)z^{\a+1/2} \,dz \\
& \lesssim \sup_{t<x/2}\frac{1}{t}\int_{x-t}^{x+t}f(z)\,dz=2Lf(x).
\end{align*}

\noindent{\textbf{Region $E_2$.}}
In this region $t \simeq x$, so using \eqref{relE} and $\a+\b-1/2 \ge 0$, we get
$$
\Phi^{\a,\b}_{*,E_2} f(x) \lesssim \sup_{x/2 \le t < 3x} \frac{1}{x^{2\a+\b+1}} \int_{|x-t|}^{x+t} z^{2\a+\b}f(z)\, dz
	\le H_{2\a+\b+1}f(4x).
$$

\noindent{\textbf{Region $E_3$.}}
Using \eqref{relE4} and $\a+\b-1/2\ge 0$ gives
\begin{align*}
\Phi^{\a,\b}_{*,E_3}f(x) &
 \lesssim \sup_{t \ge 3x} x^{\b} \frac{1}{x} \int_{t-x}^{t+x} z^{-\a-\b-1/2} f(z) z^{\a+1/2}\, dz
\lesssim x^{\b} R\big(z^{-\b}f\big)(x).	
\end{align*}

\noindent{\textbf{Region $F_1$.}}
Now $t \simeq x$. Using the fact that $t+x-z \simeq x$ in $F_1$, see \eqref{relF1}, we can write
$$
\Phi^{\a,\b}_{*,F_1}f(x) \lesssim \sup_{x < t < 3x} \frac{1}{x^{2\a+\b+1}} \int_0^{t-x} (t-x+z)^{\b-1} f(z) z^{2\a+1}\, dz.
$$
Here the factor $(t-x+z)^{\b-1}$ is controlled by $(t-x)^{\b-1} \lesssim x^{\b-1}$ when $\b \ge 1$, and by $z^{\b-1}$
otherwise. This leads directly to the bound
$$
\Phi^{\a,\b}_{*,F_1}f(x) \lesssim \chi_{\{\b \ge 1\}} H_{2\a+2}f(2x) + \chi_{\{\b < 1\}} H_{2\a+\b+1}f(2x).
$$

\noindent{\textbf{Region $F'_2$.}}
In view of \eqref{relF2prim},
$$
\Phi^{\a,\b}_{*,F'_2}f(x) \lesssim \sup_{t \ge 3x} \frac{1}{t^{2\a+2}} \int_0^{(t-x)/2} f(z) z^{2\a+1}\, dz
	\le N_{2\a+2}f(3x) \le N_{2\a+2}f(x).
$$

\noindent{\textbf{Region $F''_2$.}}
Here we use \eqref{relF2bis} and \eqref{relF2} obtaining
\begin{align*}
\Phi^{\a,\b}_{*,F''_2}f(x) & \lesssim \sup_{t \ge 3x} \int_{(t-x)/2}^{t-x} \frac{z^{-\b} f(z)}{(t-z)^{1-\b}}\, dz 
= \sup_{t \ge 2x} \int_{t/2}^{t} \frac{z^{-\b}f(z)}{(t+x-z)^{1-\b}}\, dz = T_{1-\b}f(x).
\end{align*}

The bounds of Lemma \ref{lem:ctrlPhi} follow.
\end{proof}

\subsection{Control of $\Psi^{\a,\b}_{*,E}$} \label{ssec:32}
We will prove the following.
\begin{lema} \label{lem:ctrlPsiE}
Let $\a > -1$ and $-1/2 < \a+\b < 1/2$. Then, given any $\theta > 0$ such that $\frac{1}{1+\theta} < \a+\b+1/2$,
$$
\Psi^{\a,\b}_{*,E}f(x) \lesssim
	x^{\b}\big[ D(z^{-\beta}f)^{1+\theta}(x)\big]^{\frac{1}{1+\theta}} 
	+ x^{\b}\big[ R(z^{-\beta}f)^{1+\theta}(x)\big]^{\frac{1}{1+\theta}},
$$
uniformly in $f \ge 0$ and $x > 0$.
\end{lema}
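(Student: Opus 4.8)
The plan is to follow the region-by-region scheme of Lemma~\ref{lem:ctrlPhi}, but the essential new difficulty is that now $\a+\b-1/2<0$, so both factors $[t^2-(x-z)^2]^{\a+\b-1/2}$ and $[(x+z)^2-t^2]^{\a+\b-1/2}$ are \emph{singular} at the endpoints of the $z$-interval and cannot be bounded pointwise. The remedy is to peel off $f$ by Hölder's inequality, which is exactly what produces the exponents $1+\theta$ in the statement. First I would pass to $g(z)=z^{-\b}f(z)$ and note that the powers of $z$ collapse: writing $w_t(x,z)=[t^2-(x-z)^2]\,[(x+z)^2-t^2]$ and $\nu=\a+\b-1/2$, then for $(t,x,z)\in E$, i.e.\ $z\in(|t-x|,t+x)$,
$$
\int_0^\infty \chi_E(t,x,z)\,\Psi_t^{\a,\b}(x,z)\,f(z)\,d\mu_\a(z)
= x^{-2\a-\b}\,t^{-2\a-2\b}\int_{|t-x|}^{t+x} z\,w_t(x,z)^{\nu}\,g(z)\,dz .
$$

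Next I would apply Hölder's inequality with exponents $1+\theta$ and $(1+\theta)/\theta$ with respect to $dz$, splitting $z=z^{1/(1+\theta)}\,z^{\theta/(1+\theta)}$, to get
$$
\int_{|t-x|}^{t+x} z\,w_t^{\nu}\,g\,dz
\le \Big(\int_{|t-x|}^{t+x} z\,g^{1+\theta}\,dz\Big)^{\frac{1}{1+\theta}}
\Big(\int_{|t-x|}^{t+x} z\,w_t^{\mu}\,dz\Big)^{\frac{\theta}{1+\theta}},
\qquad \mu=\nu\tfrac{1+\theta}{\theta}.
$$
Since $w_t$ has simple zeros at $z=|t-x|$ and $z=t+x$, the singular integral converges precisely when $\mu>-1$, and a one-line computation shows that $\mu>-1$ is equivalent to the assumed $\frac{1}{1+\theta}<\a+\b+1/2$. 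This is the only place where the hypothesis on $\theta$ is used.

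Then I would estimate the two factors over $E_1,E_2,E_3$. For the singular factor I would rescale the $z$-interval to $(-1,1)$: via $z=x+ts$ when $t\lesssim x$ (region $E_1$), resp.\ $z=t+xs$ when $t\gtrsim x$ (region $E_3$). In either case one finds $w_t\simeq (xt)^2(1-s^2)$, with $z\simeq x\vee t$ and $dz\simeq(x\wedge t)\,ds$, so the convergent Beta integral $\int_{-1}^1(1-s^2)^\mu\,ds$ gives $\int z\,w_t^\mu\,dz\simeq (xt)^{1+2\mu}$; the borderline configuration $t\simeq x$ (region $E_2$, where the lower endpoint reaches down to $z=0$) is checked by direct computation and yields the same power. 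The mass factor $\int z\,g^{1+\theta}\,dz$ is controlled by $4xt\,D(g^{1+\theta})(x)$ when $t<2x$ (then $|t-x|<x<t+x$ and $(t+x)^2-(t-x)^2=4xt$), and by $2xt\,R(g^{1+\theta})(x)$ when $t\ge 2x$ (using $z\simeq t$ on $(t-x,t+x)$).

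Finally, inserting these estimates and collecting exponents, the two bracketed factors combine to $(xt)^{1/(1+\theta)}(xt)^{(1+2\mu)\theta/(1+\theta)}=(xt)^{1+2\nu}=(xt)^{2\a+2\b}$, using $\mu\theta/(1+\theta)=\nu$ and $2\nu=2\a+2\b-1$; against the prefactor $x^{-2\a-\b}t^{-2\a-2\b}$ this leaves exactly $x^{\b}$, with no surviving power of $t$, times $[D(g^{1+\theta})(x)]^{1/(1+\theta)}$ (in $E_1$ and in $E_2$ for $t<2x$) or $[R(g^{1+\theta})(x)]^{1/(1+\theta)}$ (in $E_3$ and in $E_2$ for $t\ge 2x$). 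As the resulting bound no longer depends on $t$, taking $\sup_{t}$ and summing the three regions gives the lemma. I expect the main obstacle to be the uniform evaluation of $\int z\,w_t^\mu\,dz$ across all $t>0$ — in particular confirming the clean power $(xt)^{1+2\mu}$ near both endpoints and in the degenerate regime $t\approx x$ where the lower endpoint collides with $z=0$ — together with the bookkeeping that guarantees the exact cancellation of all powers of $t$.
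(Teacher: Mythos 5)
Your proposal is correct and follows essentially the same route as the paper: the substitution $g=z^{-\b}f$ (which the paper phrases as the reduction $\Psi^{\a,\b}_{*,E}f=x^{\b}\Psi^{\a+\b,0}_{*,E}(z^{-\b}f)$), H\"older with exponents $1+\theta$ and $(1+\theta)/\theta$, and the split at $t=2x$ giving $D$ and $R$. The only difference is cosmetic: the uniform evaluation of the singular integral that you flag as the main obstacle is handled in the paper in one stroke via the identity $[t^2-(x-z)^2][(x+z)^2-t^2]=[(t+x)^2-z^2][z^2-(x-t)^2]$ and the exact Beta-integral formula of Lemma \ref{lem:iest}, which gives $\mathrm{const}\cdot(4xt)^{2\mu+1}$ for every configuration of $t$ and $x$, so no separate treatment of the regime $t\approx x$ is needed.
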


\begin{proof}
We first reduce the task to the special case $\b=0$. Observe that
$$
\Psi^{\a,\b}_{*,E}f(x) = \sup_{t > 0} \frac{x^{\b}}{(tx)^{2\a+2\b}} \int_{|x-t|}^{x+t}
  \Big( \big[ t^2-(x-z)^2\big] \big[(x+z)^2-t^2\big]\Big)^{\a+\b-1/2} z^{-\b} f(z) z\, dz.
$$
Thus
$$
	\Psi^{\a,\b}_{*,E}f(x) = x^{\b} \Psi^{\a+\b,0}_{*,E}\big(z^{-\b}f\big)(x)
$$
and hence it is enough to show that, given $-1/2 < \a < 1/2$,
\begin{equation} \label{redPsiE}
\Psi^{\a,0}_{*,E}f(x) \lesssim
	\big[ Df^{1+\theta}(x)\big]^{\frac{1}{1+\theta}} + \big[ Rf^{1+\theta}(x)\big]^{\frac{1}{1+\theta}}
\end{equation}
with any fixed $\theta > 0$ such that $\frac{1}{1+\theta} < \a+1/2$.

The proof of \eqref{redPsiE} is a straightforward generalization of the reasoning from \cite{DMO}, see the proof
of \cite[Theorem 3.1(b)]{DMO}. We present briefly some details for the reader's convenience.
Using the identity
\begin{equation} \label{id_tr}
\big[ t^2-(x-z)^2 \big] \big[(x+z)^2-t^2\big]=\big[(t+x)^2-z^2\big]\big[z^2-(x-t)^2\big]
\end{equation}
and then H\"older's inequality one gets the bound
\begin{align*}
\Psi^{\a,0}_{*,E}f(x) & \le \sup_{t>0} \Bigg[
\frac{1}{(xt)^{2\a}} \bigg( \int_{|t-x|}^{t+x} z f^{1+\theta}(z)\, dz \bigg)^{\frac{1}{1+\theta}} \\ & \qquad \times
		\bigg( \int_{|t-x|}^{t+x} \Big( \big[(t+x)^2-z^2\big]\big[z^2-(x-t)^2\big] \Big)^{(2\a-1)\frac{1+\theta}{2\theta}}
		z\, dz \bigg)^{\frac{\theta}{1+\theta}}\Bigg].
\end{align*}
The second integral here converges when $(2\a-1)\frac{1+\theta}{2\theta} > -1$, i.e.\ when $\frac{1}{1+\theta}< \a+1/2$.
In such the case it is comparable to $(xt)^{(2\a-1)\frac{1+\theta}{\theta}+1}$, see Lemma \ref{lem:iest}
in Section \ref{sec:counter} below.
Consequently, we arrive at the bound
$$
\Psi^{\a,0}_{*,E}f(x) \lesssim
	\sup_{t>0}\bigg( \frac{1}{xt} \int_{|t-x|}^{t+x} z f^{1+\theta}(z)\, dz \bigg)^{\frac{1}{1+\theta}}.
$$

From here one proceeds by splitting the supremum into $t \le 2x$ and $t> 2x$
(which corresponds essentially to estimating separately $\Psi^{\a,0}_{*,E_1\cup E_2}$ and $\Psi^{\a,0}_{*,E_3}$).
In the first case $|t-x|\le x$ and we get
the control by $[Df^{1+\theta}(x)]^{1/(1+\theta)}$. On the other hand, if $t > 2x$, then $z \simeq t$ and this part
of the maximal operator is controlled by $[Rf^{1+\theta}(x)]^{1/(1+\theta)}$.
\end{proof}

\subsection{Control of $\Psi^{\a,\b}_{*,F}$} \label{ssec:33}
We finally establish a control of $\Psi^{\a,\b}_{*,F}$ in terms of the special operators.
\begin{lema} \label{lem:ctrlPsiF}
Let $\a > -1$ and $-1/2 < \a+\b < 1/2$. Then, given any $\theta > 0$ such that $\frac{1}{1+\theta} < \a+\b+1/2$,
\begin{align*}
\Psi^{\a,\b}_{*,F}f(x) & \lesssim 	
\chi_{\{\b \ge 1\}}\big[H_{(\a+1/2+1-\b)(1+\theta)+1}\big(f^{1+\theta}\big)(2x)\big]^{\frac{1}{1+\theta}} \\
& \quad + \chi_{\{\b < 1\}}\big[H_{(\a+1/2)(1+\theta)+1}\big(f^{1+\theta}\big)(2x)\big]^{\frac{1}{1+\theta}} \\
& \quad +N_{2\a+2}f(x)
 +\big[ T_{(\a+1/2)(1+\theta)}\big( f^{1+\theta} \big)(x)\big]^{\frac{1}{1+\theta}},
\end{align*}
uniformly in $f \ge 0$ and $x > 0$.
\end{lema}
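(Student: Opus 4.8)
The plan is to mirror the proof of Lemma~\ref{lem:ctrlPhi}, but now working with the kernel $\Psi^{\a,\b}_t(x,z)$ restricted to $F$, and to handle the fact that $\a+\b-1/2 < 0$ (so the relevant powers of the factored expressions are now negative) by inserting H\"older's inequality exactly as in the proof of Lemma~\ref{lem:ctrlPsiE}. First I would split the supremum over $t$ according to the sub-regions $F_1$, $F'_2$ and $F''_2$ introduced in Section~\ref{ssec:22}, which together exhaust $F$, and bound $\Psi^{\a,\b}_{*,F_1}$, $\Psi^{\a,\b}_{*,F'_2}$ and $\Psi^{\a,\b}_{*,F''_2}$ separately. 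In each region I would use the corresponding comparison relations \eqref{relF1}, \eqref{relF2prim}, \eqref{relF2bis}, \eqref{relF2} to simplify the factors $\big[t^2-(x-z)^2\big]$ and $\big[t^2-(x+z)^2\big]$ appearing in the $F$-part of $\Psi^{\a,\b}_t(x,z)$.

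\textbf{The three regions.}
In $F'_2$ the relation \eqref{relF2prim} gives $t^2-(x\pm z)^2 \simeq t^2$, so all the singular factors are comparable to powers of $t$; the kernel collapses to (a constant times) $t^{-2\a-2}$, and the resulting operator is controlled by $N_{2\a+2}f$ exactly as in the $\Phi$-case, with no need for H\"older's inequality. This gives the $N_{2\a+2}f(x)$ term. In $F''_2$, using \eqref{relF2bis} one has $z\simeq t\simeq t-x$ and $t+x-z\simeq t-z$, so after inserting H\"older with exponent $1+\theta$ and its conjugate the singular factor $\big[t^2-(x+z)^2\big]^{\a+\b-1/2}$ produces, upon integrating, a weight that matches the kernel of $T_{(\a+1/2)(1+\theta)}$; the convergence of the auxiliary integral requires precisely $(\a+\b-1/2)(1+\theta)\cdot\frac{\theta}{...} > -1$, i.e.\ $\frac{1}{1+\theta}<\a+\b+1/2$, which is our standing hypothesis. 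This yields the $\big[T_{(\a+1/2)(1+\theta)}(f^{1+\theta})(x)\big]^{1/(1+\theta)}$ term. In $F_1$, where $t\simeq x$, I would use \eqref{relF1} to write $t^2-(x-z)^2\simeq x(t-x+z)$ and $t^2-(x+z)^2\simeq x(t-x-z)$, integrate over $0<z<t-x$, and control the $(t-x\pm z)$-powers by powers of $z$ (when $\b<1$) or by powers of $x$ (when $\b\ge 1$), landing on the two Hardy operators $H_{(\a+1/2)(1+\theta)+1}$ and $H_{(\a+1/2+1-\b)(1+\theta)+1}$ after the H\"older step. The reduction $\Psi^{\a,\b}_{*,F}f = x^{\b}\Psi^{\a+\b,0}_{*,F}(z^{-\b}f)$ used in Lemma~\ref{lem:ctrlPsiE} is \emph{not} available here because the $F$-kernel is not jointly homogeneous in $(x,z)$ in the same way, so I expect to carry the factor $x^\b$ through the estimates directly.

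\textbf{The main obstacle.}
The delicate point will be the H\"older argument in $F_1$ and $F''_2$: unlike the $E$-case treated in Lemma~\ref{lem:ctrlPsiE}, the two singular factors in $F$ do not symmetrically factor through the identity \eqref{id_tr}, so I will have to keep track of \emph{which} factor is being absorbed into the auxiliary (convergent) integral and which stays with $f$. The auxiliary integral of the appropriate power of $(t-x\pm z)$ (in $F_1$) or of $t-z$ (in $F''_2$) must be shown to be comparable to a clean power of $x$ or $t$, and its convergence hinges on the exponent condition $\frac{1}{1+\theta}<\a+\b+1/2$; verifying this comparability, and confirming that the surviving exponents assemble into exactly the stated indices of $H$ and $T$, is the step most prone to bookkeeping errors. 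The remaining verifications—the region relations and the elementary monotonicity bounds on $(t-x\pm z)^{\b-1}$—are routine and parallel to those already carried out in the proof of Lemma~\ref{lem:ctrlPhi}.
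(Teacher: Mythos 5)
Your proposal follows essentially the same route as the paper: the same splitting of $F$ into $F_1$, $F_2'$, $F_2''$, the same use of relations \eqref{relF1}--\eqref{relF2bis}, the direct bound by $N_{2\a+2}$ on $F_2'$, and H\"older with exponent $1+\theta$ on $F_1$ and $F_2''$ with the singular factor $(t-x-z)^{\a+\b-1/2}$ absorbed into the auxiliary integral (whose convergence is exactly $\tfrac{1}{1+\theta}<\a+\b+1/2$), yielding the stated $H$ and $T$ operators. The only imprecision is in $F_1$: the $\b\ge 1$ versus $\b<1$ dichotomy arises not from pointwise control of the $(t-x\pm z)$-factors (as in the $\Phi$-case) but from how $z^{2\a+1}$ is split between the two H\"older factors — attaching $z^{\a+1/2}$ or $z^{\a+1/2+1-\b}$ to $f$ — with $\b\ge 1$ needed to keep the leftover power of $t-x$ non-negative; this is exactly the bookkeeping you flagged, and it works out as in the paper.
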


\begin{proof}
We will bound suitably the restricted maximal operators $\Psi^{\a,\b}_{*,F_1}$,
$\Psi^{\a,\b}_{*,F'_2}$ and $\Psi^{\a,\b}_{*,F''_2}$.

\noindent{\textbf{Region $F_1$.}}
Here $t \simeq x$. Then, by \eqref{relF1}, we have
\begin{equation*}
\Psi^{\a,\b}_{*,F_1}f(x)  \lesssim \sup_{x < t < 3x} \frac{(t-x)^{-\a-1/2}}{x^{2\a+\b+1}}\int_0^{t-x}
	(t-x-z)^{\a+\b-1/2} f(z) z^{2\a+1}\, dz.
\end{equation*}

By H\"older's inequality, the last integral is not bigger than
$$
\bigg(\int_0^{t-x} \big[ z^{\a+1/2}f(z)\big]^{1+\theta}\, dz\bigg)^{\frac{1}{1+\theta}}
\bigg( \int_0^{t-x} \big[ z^{\a+1/2} (t-x-z)^{\a+\b-1/2}\big]^{\frac{1+\theta}{\theta}}\, dz\bigg)^{\frac{\theta}{1+\theta}}.
$$
Assuming that $\b < 1$,
the second integral here is finite if and only if $\frac{1}{1+\theta} < \a+\b+1/2$.
If this is the case, it is comparable with $(t-x)^{(2\a+\b)\frac{1+\theta}{\theta}+1}$, as verified by changing the variable
$z \mapsto (t-x)z$. This gives
\begin{align} \nonumber
\Psi^{\a,\b}_{*,F_1}f(x) & \lesssim \sup_{x < t < 3x} 
	\frac{(t-x)^{\a+\b-1/2+\frac{\theta}{1+\theta}}}{x^{2\a+\b+1}}\bigg(\int_0^{t-x}\big[z^{\a+1/2}f(z)\big]^{1+\theta}\, dz
	\bigg)^{\frac{1}{1+\theta}} \\ \label{rel_b}
	& \lesssim \frac{1}{x^{\a+1/2+\frac{1}{1+\theta}}}\bigg(\int_0^{2x}\big[z^{\a+1/2}f(z)\big]^{1+\theta}\,
	dz\bigg)^{\frac{1}{1+\theta}} \\
	& = \big[H_{(\a+1/2)(1+\theta)+1}\big(f^{1+\theta}\big)(2x)\big]^{\frac{1}{1+\theta}}, \nonumber
\end{align}
where in the second relation we used the assumption $\a+\b+1/2 > \frac{1}{1+\theta}$.

When $\b \ge 1$ (then automatically $\a \le -1/2$) we use H\"older's inequality with the splitting of $z^{2\a+1}$ so that
$z^{\a+1/2+1-\b}$ is attached to $f$. This leads to the bound
$$
\Psi^{\a,\b}_{*,F_1}f(x) \lesssim \big[H_{(\a+1/2+1-\b)(1+\theta)+1}\big(f^{1+\theta}\big)(2x)\big]^{\frac{1}{1+\theta}}
$$
under the condition $\frac{1}{1+\theta} < \a+\b+1/2$.
The assumption $\beta \ge 1$ is used in the analogue of estimate \eqref{rel_b}, to assure that a power of $t-x$ occurring
there is non-negative.

\noindent{\textbf{Region $F'_2$.}}
In view of \eqref{relF2prim},
$$
\Psi^{\a,\b}_{*,F'_2}f(x) \lesssim \sup_{t \ge 3x} \frac{1}{t^{2\a+2}} \int_0^{(t-x)/2}f(z) z^{2\a+1}\, dz
	\le N_{2\a+2}f(x).
$$

\noindent{\textbf{Region $F''_2$.}}
Now we use \eqref{relF2} and get
$$
\Psi^{\a,\b}_{*,F''_2}f(x) \lesssim \sup_{t \ge 3x} \frac{1}{t^{2\a+\b+1}} \int_{(t-x)/2}^{t-x} (t+x-z)^{-\a-1/2}
	(t-x-z)^{\a+\b-1/2} f(z) z^{2\a+1}\, dz.
$$
By means of H\"older's inequality we can control the above integral by
$$
\bigg( \int_{(t-x)/2}^{t-x} \bigg[ \bigg( \frac{z}{t+x-z}\bigg)^{\a+1/2}f(z)\bigg]^{1+\theta} dz\bigg)^{\frac{1}{1+\theta}}
\bigg( \int_{(t-x)/2}^{t-x} \big[(t-x-z)^{\a+\b-1/2} z^{\a+1/2}\big]^{\frac{1+\theta}{\theta}}
	dz\bigg)^{\frac{\theta}{1+\theta}}.
$$
The second integral here is finite if and only if $\frac{1}{1+\theta} < \a+\b+1/2$. In this case
it is comparable with $(t-x)^{(2\a+\b)\frac{1+\theta}{\theta}+1}$. This implies, see \eqref{relF2bis},
\begin{align*}
\Psi^{\a,\b}_{*,F''_2}f(x) & \lesssim \sup_{t \ge 3x} \frac{(t-x)^{2\a+\b+\frac{\theta}{1+\theta}}}{t^{2\a+\b+1}}
	\bigg( \int_{(t-x)/2}^{t-x} \bigg[ \bigg( \frac{z}{t+x-z}\bigg)^{\a+1/2}f(z)\bigg]^{1+\theta}\, dz
	\bigg)^{\frac{1}{1+\theta}} \\
	& \lesssim \sup_{u \ge 2x} \bigg( \int_{u/2}^u \frac{z^{(\a+1/2)(1+\theta)-1}}{(u+x-z)^{(\a+1/2)(1+\theta)}}
	f^{1+\theta}(z)\, dz\bigg)^{\frac{1}{1+\theta}} \\
	& = \big[ T_{(\a+1/2)(1+\theta)}\big( f^{1+\theta} \big)(x)\big]^{\frac{1}{1+\theta}}.
\end{align*}

Now Lemma \ref{lem:ctrlPsiF} follows.
\end{proof}

\subsection{Proof of Theorem \ref{thm:main}} \label{ssec:34}
We distinguish several cases described by conditions on $\a$ and $\b$.
Altogether, they imply Theorem \ref{thm:main}.
In what follows we always assume that $\a > -1$ and $\a + \b > -1/2$, $1 < p < \infty$, and $x \in \mathbb{R}_+$.

\noindent \textbf{Case 1.} $\a + \b > 1/2$ and $-\b \notin \mathbb{N}$.
By Theorem \ref{thm:kermaj}, $M^{\a,\b}_*f(x) \lesssim \Phi^{\a,\b}_* f(x)$. Thus from Lemma \ref{lem:ctrlPhi}
we have the control
\begin{align*}
M_*^{\a,\b}f(x) & \lesssim Lf(x)+ H_{2\a+\b+1}f(4x)+ H_{2\a+2}f(2x) + N_{2\a+2}f(x)\\
					& \qquad + x^{\b}R\big(z^{-\b}f\big)(x) + T_{1-\b}f(x).
\end{align*}

The operator $L$ is bounded on $L^p(x^{\delta}dx)$ for any $\delta \in \mathbb{R}$.
The conditions for $L^p(x^{\delta}dx)$ boundedness of
the remaining operators controlling $M^{\a,\b}_{*}$ are as follows, see Section \ref{ssec:23}:
\begin{align*}
H_{2\a+\b+1}: & \qquad \delta < (2\a+\b+1)p-1, \\
H_{2\a+2}: & \qquad \delta < (2\a+2)p-1, \\
N_{2\a+2}: & \qquad -1 < \delta < (2\a+2)p-1, \\
f\mapsto x^{\b}R(z^{-\b}f)(x): & \qquad -\b p \le \delta,\\
T_{1-\b}: & \qquad \begin{cases}
								-1 < \delta, & \textrm{if}\; \b \ge 1, \\
								-\b \le \delta, & \textrm{if}\; \b \in (0,1), \\
								-\b p \le \delta, & \textrm{if}\; \b < 0. 
						\end{cases}
\end{align*}

We conclude that $M^{\a,\b}_*$ is bounded on $L^p(x^{\delta}dx)$ if $\delta$ satisfies
$$
- \min(1,\b,\b p) < \delta < \min\Big( (2\a+\b+1)p-1, (2\a+2)p-1 \Big),
$$
with the first inequality weakened when $\b < 1$.
Here
$$
\min(1,\b,\b p) = \begin{cases}
											1, & \textrm{if}\; \b \ge 1,  \\
											\b, & \textrm{if}\; \b \in (0,1), \\
											\b p, & \textrm{if}\; \b < 0,
									\end{cases}
$$
while
$$
\min\Big( (2\a+\b+1)p-1, (2\a+2)p-1 \Big) =
	\begin{cases}
		(2\a+\b+1)p-1, & \textrm{if}\; \b < 1, \\
		(2\a+2)p-1, & \textrm{if}\; \b \ge 1.
	\end{cases}
$$

\noindent \textbf{Case 2.} $\a+\b < 1/2$ and $-\b \notin \mathbb{N}$.
In view of Theorem \ref{thm:kermaj}, $M^{\a,\b}_*f(x) \lesssim \Psi^{\a,\b}_*f(x)$.
Consequently, from Lemmas \ref{lem:ctrlPsiE} and \ref{lem:ctrlPsiF} we get the control
\begin{align*}
M_*^{\a,\b}f(x) & \lesssim x^{\b}\big[ D(z^{-\beta}f)^{1+\theta}(x)\big]^{\frac{1}{1+\theta}} 
	+ x^{\b}\big[ R(z^{-\beta}f)^{1+\theta}(x)\big]^{\frac{1}{1+\theta}}\\
&\quad+ \chi_{\{\b < 1\}}\big[H_{(\a+1/2)(1+\theta)+1}\big(f^{1+\theta}\big)(2x)\big]^{\frac{1}{1+\theta}} \\
& \quad	+ \chi_{\{\b \ge 1\}}\big[H_{(\a+1/2+1-\b)(1+\theta)+1}\big(f^{1+\theta}\big)(2x)\big]^{\frac{1}{1+\theta}} \\
& \quad +N_{2\a+2}f(x)
 +\big[ T_{(\a+1/2)(1+\theta)}\big( f^{1+\theta} \big)(x)\big]^{\frac{1}{1+\theta}},
\end{align*}
with any fixed $\theta > 0$ satisfying $\frac{1}{1+\theta} < \a +\b + 1/2$.

Assuming that $1/p < \a+\b+1/2$, conditions for $L^p(x^{\delta}dx)$ boundedness of the operators controlling
$M^{\a,\b}_*$ are as follows, see Section \ref{ssec:23}:
\begin{align*}
f \mapsto x^{\b}\big[ D(z^{-\b}f)^{1+\theta}(x)\big]^{\frac{1}{1+\theta}}: & \qquad
		-\b p - 1 < \delta < (2\a+\b+1)p-1, \\
f \mapsto x^{\b}\big[ R(z^{-\b}f)^{1+\theta}(x)\big]^{\frac{1}{1+\theta}}: & \qquad
		-\b p  \le \delta, \\		
f \mapsto \chi_{\{\b < 1\}}\big[ H_{(\a+1/2)(1+\theta)+1}(f^{1+\theta})(x)\big]^{\frac{1}{1+\theta}}: & \qquad
						\delta < (2\a+\b+1)p-1,\\
f \mapsto \chi_{\{\b \ge 1\}}\big[ H_{(\a+3/2-\b)(1+\theta)+1}(f^{1+\theta})(x)\big]^{\frac{1}{1+\theta}}: & \qquad
	\delta < (2\a+2)p-1, \\
N_{2\a+2}: & \qquad -1 < \delta < (2\a+2)p-1, \\
f \mapsto \big[T_{(\a+1/2)(1+\theta)}(f^{1+\theta})(x)\big]^{\frac{1}{1+\theta}}: & \qquad \begin{cases}
								-\b p < \delta, & \textrm{if}\; \b < 0, \\
								-1 < \delta, & \textrm{if}\; \b > 0 \; \textrm{and} \; \a \le -1/2, \\
								\frac{-\b}{\a+\b+1/2} < \delta, & \textrm{if}\; \b > 0 \; \textrm{and} \; \a > -1/2.
						\end{cases}
\end{align*}
Treatment of the last operator will be explained in a moment; the cases of the other $\theta$-operators are simpler
and left to the reader.
In all these conditions we understand that given $\delta$ from the indicated range, there exists $\theta >0$ satisfying
$\frac{1}{1+\theta} < \a + \b +1/2$ such that the $L^p(x^{\delta}dx)$ boundedness holds.

Intersecting the above conditions we see that $M^{\a,\b}_*$ is bounded on $L^p(x^{\delta}dx)$ if
$1/p < \a+\b+1/2$ and any of the following three sets of conditions holds:
\begin{itemize}
\item[(a)]
$
\b \ge 1$ and $-1 < \delta < (2\a+2)p-1;
$
\item[(b)]
$
0 < \b < 1$ and  $\frac{-\b}{\max(0,\a+1/2)+\b} < \delta < (2\a+\b+1)p-1;
$
\item[(c)]
$
\b < 0$ and  $-\b p < \delta < (2\a+\b+1)p-1.
$
\end{itemize}

We now explain the analysis related to $T$ in greater detail.
Observe that, for $p> 1+\theta$, the operator
$f \mapsto \big[T_{(\a+1/2)(1+\theta)}(f^{1+\theta})(x)\big]^{\frac{1}{1+\theta}}$ is bounded on $L^p(x^{\delta}dx)$ if and
only if $T_{(\a+1/2)(1+\theta)}$ is bounded on $L^{\frac{p}{1+\theta}}(x^{\delta}dx)$.
If $\b < 0$ (then automatically $\a > -1/2$), consider $\theta > 0$ such that $1/p < \frac{1}{1+\theta} < \a+\b+1/2$
(recall that we assume $1/p < \a+\b+1/2$). Then also $\frac{1}{1+\theta} < \a+1/2$ and $T_{(\a+1/2)(1+\theta)}$
falls under Lemma \ref{lem:T}(a): it is bounded on $L^{\frac{p}{1+\theta}}(x^{\delta}dx)$ when
$$
\delta \ge \big[ (\a+1/2)(1+\theta)-1 \big]\frac{p}{1+\theta} = (\a+1/2)p - \frac{p}{1+\theta}.
$$
Choosing $\theta$ so that $\frac{1}{1+\theta}$ is sufficiently close to $\a+\b+1/2$ we can approach with the lower
bound for $\delta$ arbitrarily close to $(\a+1/2)p-(\a+\b+1/2)p = -\b p$.

Let now $\b >0$. If $\a \le -1/2$, then we choose any $\theta > 0$ such that $1/p < \frac{1}{1+\theta}<\a+\b+1/2$
and apply Lemma \ref{lem:T}(d). On the other hand, when $\a > -1/2$ (in fact $\a \in (-1/2,1/2)$), we consider $\theta >0$
such that $0 < \a+1/2 < \frac{1}{1+\theta} < \a+\b+1/2$. By Lemma \ref{lem:T}(c) the condition for the boundedness is
$\delta \ge (\a+1/2)(1+\theta)-1$. One can choose $\theta$ so that $\frac{1}{1+\theta}$ is arbitrarily close to
$\a+\b+1/2$, which covers all $\delta > \frac{\a+1/2}{\a+\b+1/2}-1 = \frac{-\b}{\a+\b+1/2}$.

\noindent \textbf{Case 3.} $\a + \b =1/2$ and $-\b \notin \mathbb{N}$.
By Theorem \ref{thm:kermaj}, $M^{\a,\b}_*f(x) \lesssim \Psi^{\a,\b-\varepsilon}_* f(x)$ with any fixed
$\varepsilon > 0$. From the analysis in Case 2, with $\b$ replaced by $\b-\varepsilon$, we infer that
$M^{\a,\b}_*$ is bounded on $L^p(x^{\delta}dx)$ if $1/p < \a+\b-\varepsilon+1/2$ and any of the following sets of conditions
holds:
\begin{itemize}
\item[(a)]
$
\b-\varepsilon \ge 1$ and $-1 < \delta < (2\a+2)p-1;
$
\item[(b)]
$
0 < \b-\varepsilon < 1$ and  $\frac{-(\b-\varepsilon)}{\max(0,\a+1/2)+\b-\varepsilon} < \delta < (2\a+\b-\varepsilon+1)p-1;
$
\item[(c)]
$
\b-\varepsilon < 0$ and  $-(\b-\varepsilon) p < \delta < (2\a+\b-\varepsilon+1)p-1.
$
\end{itemize}
Since $\varepsilon$ can be chosen arbitrarily small, the above conditions with $\varepsilon=0$
also imply $L^p(x^{\delta}dx)$ boundedness of $M^{\a,\b}_*$, as can be easily verified.

\noindent \textbf{Case 4.} $-\b \in \mathbb{N}$.
In view of Theorem \ref{thm:kermaj}, we have the bound $M^{\a,\b}_*f(x) \lesssim \Phi^{\a,\b}_{*,E}f(x)$ in case
$\a+\b \ge 1/2$, and $M^{\a,\b}_*f(x) \lesssim \Psi^{\a,\b}_{*,E}f(x)$ when $\a+\b < 1/2$.
Therefore, Lemmas \ref{lem:ctrlPhi} and \ref{lem:ctrlPsiE} provide the control
$$
M^{\a,\b}_*f(x) \lesssim
	\begin{cases}
		Lf(x) + H_{2\a+\b+1}f(4x) + x^{\b} R(z^{-\b}f)(x), & \textrm{if} \;\; \a+\b \ge 1/2, \\
		x^{\b}\big[ D(z^{-\b}f)^{1+\theta}(x)\big]^{\frac{1}{1+\theta}}
			+ x^{\b}\big[ R(z^{-\b}f)^{1+\theta}(x)\big]^{\frac{1}{1+\theta}}, & \textrm{if} \;\; \a+\b < 1/2,
	\end{cases}
$$
with any fixed $\theta > 0$ such that $\frac{1}{1+\theta} < \a+\b+1/2$.

Taking now into account conditions for $L^p(x^{\delta}dx)$ boundedness of the component operators involved, see
Cases 1 and 2 above, we conclude that $M^{\a,\b}_*$ is bounded on $L^p(x^{\delta}dx)$ if $1/p < \a +\b +1/2$
(this condition is automatically satisfied when $\a+\b \ge 1/2$) and
$$
-\b p \le \delta < (2\a+\b+1)p -1.
$$

The proof of Theorem \ref{thm:main} is complete. \qed

\subsection{Proof of Proposition \ref{prop:Mloc}} \label{ssec:35}
To begin with, observe that $M^{\a,\b}_{*,\textrm{tru}}$ arises from $M^{\a,\b}_*$ by restricting the kernel to the
region $E_1$.

When $\a + \b \ge 1/2$, we have $\Phi^{\a,\b}_{*,E_1}f(x) \lesssim Lf(x)$, see the proof of Lemma \ref{lem:ctrlPhi}.
Further, in case $\a+\b < 1/2$, $\Psi^{\a,\b}_{*,E_1}f(x) \lesssim x^{\b}[L(z^{-\b}f)^{1+\theta}(x)]^{1/(1+\theta)}$
with any fixed $\theta > 0$ such that $\frac{1}{1+\theta} < \a + \b +1/2$. This is implicitly contained in the proof
of Lemma \ref{lem:ctrlPsiE}. The more precise bounds of Theorem \ref{thm:kerest}(4),(5) lead easily to the
control $M^{\a,\b}_{*,\textrm{tru}}f(x) \lesssim Lf(x)$ in case $\a+\b=1/2$ and $-\b \notin \mathbb{N}$
(when $t < x/2$ the logarithm occurring in Theorem \ref{thm:kerest}(4) has no effect).
All this together with Theorem \ref{thm:kermaj} justifies \eqref{Mlocctrl}.

The remaining part of Proposition \ref{prop:Mloc} follows from the well-known fact that for any $\delta \in \mathbb{R}$,
$L$ is bounded on $L^p(x^{\delta}dx)$, $1<p<\infty$, and from $L^1(x^{\delta}dx)$ to weak $L^1(x^{\delta}dx)$.
\qed

\section{Counterexamples} \label{sec:counter}

Our aim in this section is to prove Proposition \ref{prop:sharp}.
This will be done by constructing suitable counterexamples, in the first step for certain auxiliary maximal operators.

\subsection{Counterexamples for auxiliary operators} \label{ssec:41}
For $\a > -1$, $\a+\b > -1/2$ and $t > 0$ consider the following integral operators, with positive kernels,
acting on functions on $\mathbb{R}_+$:
\begin{align*}
U_{t,1}^{\a,\b}f(x) & = \frac{x^{-2\a-\b}}{t^{2\a+2\b}} \int_{|t-x|}^{t+x} \Big( \big[(x+t)^2-z^2\big]
	\big[ z^2 - (x-t)^2\big] \Big)^{\a+\b-1/2} z^{1-\b} f(z)\, dz,\\
U_{t,2}^{\a,\b}f(x) & = \frac{x^{-\a-1/2}}{t^{2\a+2\b}} \int_{|t-x|}^{t+x} \big[ t^2-(x-z)^2 \big]^{\a+\b-1/2}
	z^{\a+1/2} f(z)\, dz, \\
\widetilde{U}_{t,2}^{\a,\b}f(x) & = \frac{x^{-\a-1/2}}{t^{2\a+2\b}} \int_{|t-x|}^{t+x} \big[ t^2-(x-z)^2 \big]^{\a+\b-1/2}
	\log\bigg( \frac{8xz}{(x+z)^2-t^2} \bigg) z^{\a+1/2} f(z)\, dz, \\
V_{t,1}^{\a,\b}f(x) & = \frac{\chi_{\{x<t\}}}{t^{2\a+2\b}} \int_0^{t-x} \big[ t^2-(x-z)^2\big]^{-\a-1/2}
	\big[t^2-(x+z)^2\big]^{\a+\b-1/2} z^{2\a+1} f(z)\, dz, \\
V_{t,2}^{\a,\b}f(x) & = \frac{\chi_{\{x<t\}}}{t^{2\a+2\b}} \int_0^{t-x} \big[t^2-(x-z)^2\big]^{\b-1} z^{2\a+1}f(z)\, dz, \\
\widetilde{V}_{t,2}^{\a,\b}f(x) & = \frac{\chi_{\{x<t\}}}{t^{2\a+2\b}} \int_0^{t-x} \big[t^2-(x-z)^2\big]^{\b-1}
	\log\bigg(2 \frac{t^2-(x-z)^2}{t^2-(x+z)^2} \bigg)z^{2\a+1}f(z)\, dz.
\end{align*}
These operators correspond to right-hand sides of the bounds (1)--(8) in Theorem \ref{thm:kerest}.
More precisely, in view of \eqref{id_tr}, $U_{t,1}^{\a,\b}$ matches the $E$ part in (1) and (3), (5), (6), (7), $U_{t,2}^{\a,\b}$
matches the $E$ part in (2), (5), (8), while $\widetilde{U}_{t,2}^{\a,\b}$ matches the $E$ part in (4).
Similarly, $V_{t,1}^{\a,\b}$ matches the $F$ part in (3), (5), (6), (8), $V_{t,2}^{\a,\b}$ matches the $F$ part in
(2), (5), (7), while $\widetilde{V}_{t,2}^{\a,\b}$ matches the $F$ part in (4).
Observe that
$$
U_{t,2}^{\a,\b}f(x) \lesssim \widetilde{U}_{t,2}^{\a,\b}f(x), \qquad
V_{t,2}^{\a,\b}f(x) \lesssim \widetilde{V}_{t,2}^{\a,\b}f(x), \qquad f \ge 0, \quad x,t > 0.
$$

Denote $U_{*,1}^{\a,\b}f = \sup_{t>0}|U_{t,1}^{\a,\b}f|$ and similarly for the remaining operators in question
replacing $t$ by $*$ in the subscript indicates the corresponding maximal operator.
\begin{lema} \label{lem:counter}
Let $\a > -1$, $\a+\b> -1/2$, $\delta \in \mathbb{R}$ and $1<p<\infty$.
\begin{itemize}
\item[(a)] The following are necessary conditions for each of $U_{*,1}^{\a,\b}$, $U_{*,2}^{\a,\b}$, $\widetilde{U}_{*,2}^{\a,\b}$
	to be well defined and bounded on $L^p(\mathbb{R}_+,x^{\delta}dx)$:
		\begin{itemize}
			\item[(a1)] $\delta < (2\a+\b+1)p-1$,
			\item[(a2)] $\frac{1}p < \a+\b+1/2$,
			\item[(a3)] $-\b p \le \delta$.
		\end{itemize}
\item[(b)] The following are necessary conditions for each of $V_{*,1}^{\a,\b}$, $V_{*,2}^{\a,\b}$, $\widetilde{V}_{*,2}^{\a,\b}$
	to be well defined and bounded on $L^p(\mathbb{R}_+,x^{\delta}dx)$:
		\begin{itemize}
			\item[(b1)] $\delta < (2\a+2)p-1$,
			\item[(b2)] $-1 < \delta$.
		\end{itemize}
\end{itemize}
\end{lema}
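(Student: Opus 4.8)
The plan is to prove each inequality by exhibiting a nonnegative test function $f\in L^p(x^\delta dx)$ for which the relevant maximal operator either takes the value $+\infty$ at a point (so fails to be well defined) or fails to lie in $L^p(x^\delta dx)$, whenever the stated condition is violated. Two reductions streamline the work. First, since $U_{t,2}^{\a,\b}f\lesssim \widetilde U_{t,2}^{\a,\b}f$ and $V_{t,2}^{\a,\b}f\lesssim \widetilde V_{t,2}^{\a,\b}f$ for $f\ge 0$, taking the supremum in $t$ gives $U_{*,2}^{\a,\b}f\le \widetilde U_{*,2}^{\a,\b}f$ and $V_{*,2}^{\a,\b}f\le \widetilde V_{*,2}^{\a,\b}f$; hence any condition necessary for the untilded operator is necessary for its tilde companion, and it suffices to produce counterexamples for $U_{*,1}^{\a,\b},U_{*,2}^{\a,\b}$ in part (a) and for $V_{*,1}^{\a,\b},V_{*,2}^{\a,\b}$ in part (b). Second, in each estimate I only need a \emph{lower} bound for the operator, obtained by freezing $t$ at one convenient value and bounding the resulting integral from below; all integrals that arise are of Beta type and are controlled two-sidedly by Lemma \ref{lem:iest}.

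I would establish (a2) first, since it is a well-definedness statement and is the most delicate point. Fix $x_0,t_0>0$ and set $z_0=x_0+t_0$, the endpoint at which the factor $[t_0^2-(x_0-z)^2]^{\a+\b-1/2}$ in $U_{t_0,2}^{\a,\b}$ becomes singular when $\a+\b<1/2$; for $U_{t_0,1}^{\a,\b}$ the same role is played by $[(x_0+t_0)^2-z^2]^{\a+\b-1/2}$ (and by the factor vanishing at $|x_0-t_0|$), cf.\ \eqref{id_tr}. Suppose $\tfrac1p\ge \a+\b+\tfrac12$, i.e.\ $(\a+\b-\tfrac12)p'\le -1$. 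When the inequality is strict I would take $f(z)=(z_0-z)^{-a}\chi_{(z_0-\e,z_0)}(z)$ with $\a+\b+\tfrac12\le a<\tfrac1p$; then $f\in L^p(x^\delta dx)$ because the weight is comparable to $z_0^\delta$ near $z_0$, while $U_{t_0,2}^{\a,\b}f(x_0)\gtrsim \int_{z_0-\e}^{z_0}(z_0-z)^{\a+\b-1/2-a}\,dz=+\infty$ since $\a+\b-\tfrac12-a\le -1$. In the borderline case $\tfrac1p=\a+\b+\tfrac12$ the exponents coincide, and I would use the logarithmically corrected $f(z)=(z_0-z)^{-1/p}\big(\log\frac{1}{z_0-z}\big)^{-b}\chi_{(z_0-\e,z_0)}(z)$ with $\tfrac1p<b\le 1$, which still lies in $L^p(x^\delta dx)$ but makes the defining integral diverge. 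Either way $U_{*,2}^{\a,\b}f(x_0)=\infty$, and the construction centred at $z_0$ handles $U_{*,1}^{\a,\b}$ as well.

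For the two norm conditions in part (a) I would use characteristic functions of intervals together with the region decomposition of Section \ref{ssec:22}. For (a1), take $f=\chi_{(0,1)}$ and, for large $x$, freeze $t=x$ (the diagonal region $E_2$, where $t^2-(x-z)^2\simeq xz$ for $z\in(0,1)$, cf.\ \eqref{relE}); the integral is a constant multiple of $x^{\a+\b-1/2}$, giving $U_{*,2}^{\a,\b}f(x)\gtrsim x^{-(2\a+\b+1)}$, so $\int_1^\infty x^{-(2\a+\b+1)p+\delta}\,dx$ diverges exactly when $\delta\ge(2\a+\b+1)p-1$. For (a3), take $f=\chi_{(N,N+1)}$ with $N$ large and, for $1<x<N/3$, freeze $t=N+\tfrac12$ (the remote region $E_3$, where $z\simeq t$, cf.\ \eqref{relE4}); a direct estimate gives $U_{*,2}^{\a,\b}f(x)\gtrsim x^{\b-1}N^{-\b}$, the lower counterpart of the bound $x^\b R(z^{-\b}f)$ from Lemma \ref{lem:ctrlPhi}. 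Comparing $\|U_{*,2}^{\a,\b}f\|_{L^p(x^\delta)}^p\gtrsim N^{-\b p}\int_1^{N/3}x^{(\b-1)p+\delta}\,dx$ with $\|f\|_{L^p(x^\delta)}^p\simeq N^\delta$, and noting that $\delta<-\b p$ forces $(\b-1)p+\delta<-1$ (because $-\b p<(1-\b)p-1$ for $p>1$), the inner integral is bounded below by a constant and the ratio is $\gtrsim N^{-\b p-\delta}\to\infty$; hence boundedness forces $\delta\ge-\b p$. In both cases $U_{*,1}^{\a,\b}$ is treated identically after applying \eqref{id_tr}.

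Part (b) follows the same scheme applied to the $F$-operators. For (b1) I take $f=\chi_{(0,1)}$ and, for large $x$, freeze $t=2x$ (region $F_1$, where $t^2-(x-z)^2\simeq x^2$ for $z\in(0,1)$, cf.\ \eqref{relF1}); this yields $V_{*,2}^{\a,\b}f(x)\gtrsim x^{-(2\a+2)}$, so $\int_1^\infty x^{-(2\a+2)p+\delta}\,dx=\infty$ precisely when $\delta\ge(2\a+2)p-1$. For (b2) I take $f=\chi_{(1,2)}$ and, for small $x$, freeze $t=3$, so that $(1,2)\subset(0,t-x)$ and $t^2-(x\pm z)^2$ stays comparable to a constant on the support; then $V_{*,2}^{\a,\b}f(x)\gtrsim 1$ on $(0,1)$ and $\int_0^1 x^\delta\,dx=\infty$ exactly when $\delta\le -1$. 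Again $V_{*,1}^{\a,\b}$ is handled identically. The main obstacle is the well-definedness argument for (a2): one must isolate the endpoint singularity cleanly, verify membership of the (parameter-dependent) test functions in $L^p(x^\delta dx)$, and treat the borderline exponent via the logarithmic refinement. The remaining steps reduce, through Lemma \ref{lem:iest}, to elementary one-variable integral asymptotics and to the sharpness of the model operators $H_\eta$, $R$ and $N_{2\a+2}$ recorded in Section \ref{ssec:23}; degenerate ranges of $(\a,\b)$ where these model integrals fail to converge are exactly those for which no boundedness is asserted, and require only minor modifications.
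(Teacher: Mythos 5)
Your overall strategy coincides with the paper's: for each condition exhibit an explicit test function, freeze $t$ at one value, and extract a pointwise lower bound, after reducing the tilded operators to the untilded ones via $U_{*,2}^{\a,\b}f\le\widetilde{U}_{*,2}^{\a,\b}f$ and $V_{*,2}^{\a,\b}f\le\widetilde{V}_{*,2}^{\a,\b}f$. Several of your concrete choices differ from the paper's but are equally valid. For (a2) you place the singularity at the fixed endpoint $z_0=x_0+t_0$ and split into a strict and a borderline case, whereas the paper uses a single log-corrected bump $\chi_{(1,2)}(z)(z-1)^{-\a-\b-1/2}/\log\frac{2}{z-1}$ tested against $t=x-1$, $x$ large, which handles both cases at once. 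For (a3) your moving bump $\chi_{(N,N+1)}$ tested over $1<x<N/3$ replaces the paper's $\chi_{(N-1,N+1)}(z)z^{\b}$, whose $L^p(x^{\delta}dx)$ norm tends to zero, tested at $x\in(1,2)$; your arithmetic ($\delta<-\b p$ implies $(\b-1)p+\delta<-p<-1$) is correct. For (b1) you detect the failure through the decay $x^{-2\a-2}$ at $x\to\infty$ in region $F_1$, while the paper detects it through divergence of $\int_0^{x/2}z^{2\a+1}f(z)\,dz$ at $z=0^+$ for a suitable $f\in L^p(x^{\delta}dx)$; both yield the same threshold $\delta<(2\a+2)p-1$, and since $(2\a+2)p-1>-1$ your test function $\chi_{(0,1)}$ does lie in the space in the relevant range. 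Your (b2) with $\chi_{(1,2)}$ and $t=3$ (restricting to $x$ small so that $t^2-(x+z)^2$ stays bounded below) is, if anything, simpler than the paper's $\chi_{(0,1)}(z)z^{-\delta}$.

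The one genuine defect is in (a1). The test function $\chi_{(0,1)}$ belongs to $L^p(x^{\delta}dx)$ only for $\delta>-1$, whereas the negation of (a1), namely $\delta\ge(2\a+\b+1)p-1$, includes values $\delta\le-1$ whenever $2\a+\b+1\le 0$ — a nonempty parameter range (the small triangle with vertices $(-1/2,0)$, $(-1,1/2)$, $(-1,1)$ mentioned in Section \ref{sec:prel}, e.g.\ $(\a,\b)=(-0.9,0.5)$), and the lemma asserts necessity for \emph{all} $\delta\in\RR$. Moreover $\int_0^1 z^{2\a+\b}\,dz$ diverges when $2\a+\b\le-1$, so the claimed "constant multiple of $x^{\a+\b-1/2}$" is not always finite; your closing remark about degenerate ranges does not cover the membership issue. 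Both problems disappear if you support the bump away from the origin: with $f=\chi_{(1,2)}$ and $t=x-1$ one has $t^2-(x-z)^2=(z-1)(2x-1-z)\simeq(z-1)x$ and hence $U_{x-1,2}^{\a,\b}f(x)\gtrsim x^{-2\a-\b-1}$ for large $x$, valid for all admissible $\a,\b$ and all $\delta$ — which is exactly the paper's choice. With that substitution your argument is complete.
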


In the proof of Lemma \ref{lem:counter} we will need a simple technical result.
\begin{lema} \label{lem:iest}
Let $\gamma > -1$ and $0 \le A < B$. Then
$$
\int_A^B \Big( \big[ B^2-z^2\big] \big[z^2-A^2\big]\Big)^{\gamma} z \, dz
	= \frac{\Gamma(\gamma+1)^2}{2\Gamma(2\gamma+2)} \big(B^2-A^2\big)^{2\gamma+1}.
$$
\end{lema}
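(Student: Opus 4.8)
The plan is to reduce the integral to a Beta function by two elementary substitutions. First I would substitute $u = z^2$, so that $z\,dz = \frac12\,du$ and the interval $[A,B]$ is mapped onto $[A^2,B^2]$. This gives
$$
\int_A^B \Big(\big[B^2-z^2\big]\big[z^2-A^2\big]\Big)^{\gamma} z\, dz = \frac12 \int_{A^2}^{B^2} (B^2-u)^{\gamma} (u-A^2)^{\gamma}\, du.
$$
The point of this first step is that it already exhibits the symmetric structure $(b-u)^\gamma (u-a)^\gamma$ on an interval $[a,b]$, with $a=A^2$ and $b=B^2$, which is exactly the integrand of a Beta-type integral.

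Next I would rescale $[a,b]$ to the unit interval via the affine change $u = a + (b-a)s$, $s \in [0,1]$. Under this substitution $b-u = (b-a)(1-s)$, $u-a = (b-a)s$ and $du = (b-a)\,ds$, so the three factors of $(b-a)$ combine into the prefactor $(b-a)^{2\gamma+1} = (B^2-A^2)^{2\gamma+1}$, leaving
$$
\frac12\, (B^2-A^2)^{2\gamma+1} \int_0^1 s^{\gamma}(1-s)^{\gamma}\, ds.
$$
The remaining integral is the Beta function $\mathrm{B}(\gamma+1,\gamma+1) = \Gamma(\gamma+1)^2/\Gamma(2\gamma+2)$, and substituting this evaluation yields the asserted identity.

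There is no genuine obstacle here; the computation is routine once the two substitutions are in place. The only point deserving a moment's care is convergence of the Beta integral at the endpoints $s=0$ and $s=1$, which holds precisely when $\gamma > -1$. This is the single place where the hypothesis is used, and it simultaneously guarantees that the original integral is finite, so the statement is sharp in that respect.
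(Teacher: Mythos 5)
Your proof is correct and follows essentially the same route as the paper: the paper performs your two substitutions in one step, setting $s=(z^2-A^2)/(B^2-A^2)$ directly, and likewise concludes via the Euler beta integral $\mathrm{B}(\gamma+1,\gamma+1)=\Gamma(\gamma+1)^2/\Gamma(2\gamma+2)$. Your additional remark on where $\gamma>-1$ is used is accurate but not needed beyond what the paper records.
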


\begin{proof}
Changing the variable $(z^2-A^2)\slash (B^2-A^2)= s$ we immediately see that the integral in question is equal to
$$
\frac{1}2 \big(B^2-A^2\big)^{2\gamma+1} \int_0^1 s^{\gamma}(1-s)^{\gamma}\, ds.
$$
The last integral is the well known Euler beta integral.
The conclusion follows.
\end{proof}

\begin{proof}[Proof of Lemma \ref{lem:counter}]
We will construct suitable counterexamples.\\
\noindent \textbf{Part (a).}
Take $f(z)= \chi_{(1,2)}(z)$.
Then for large $x$ we have the bounds
\begin{align*}
U_{x-1,1}^{\a,\b} f(x) &
	\gtrsim \frac{x^{-2\a-\b}}{x^{2\a+2\b}}\int_1^{2} x^{2\a+2\b-1} (z-1)^{\a+\b-1/2}\, dz
		\simeq \frac{1}{x^{2\a+\b+1}}, \\
U_{x-1,2}^{\a,\b} f(x) &
	\gtrsim \frac{x^{-\a-1/2}}{x^{2\a+2\b}} \int_{1}^{2} x^{\a+\b-1/2} (z-1)^{\a+\b-1/2}\, dz
		\simeq \frac{1}{x^{2\a+\b+1}}.
\end{align*}
Thus a necessary condition for each of
$U_{*,1}^{\a,\b}$, $U_{*,2}^{\a,\b}$ and $\widetilde{U}_{*,2}^{\a,\b}$ to map $L^p(x^{\delta}dx)$ into itself is that the
function $x \mapsto x^{-2\a-\b-1}$ is in $L^p(x^{\delta}dx)$ for large $x$, which is equivalent to (a1).

Next, choose $f(z) = \chi_{(1,2)}(z) (z-1)^{-\a-\b-1/2}\slash \log\frac{2}{z-1}$. This function belongs to $L^p(x^{\delta}dx)$
when $\frac{1}p \ge \a+\b+1/2$. However, estimating similarly as above, for large $x$ we get
$$
U_{x-1,1}^{\a,\b} f(x)
	\gtrsim \frac{1}{x^{2\a+\b+1}} \int_1^{2} \frac{1}{(z-1)\log(\frac{2}{z-1})}\, dz = \infty,
$$
and in the same way $U_{x-1,2}^{\a,\b} f(x) = \infty$. This shows that (a2) is necessary for each of
$U_{*,1}^{\a,\b}$, $U_{*,2}^{\a,\b}$ and $\widetilde{U}_{*,2}^{\a,\b}$ to be well defined on $L^p(x^{\delta}dx)$.

Now let $f_N(z) = \chi_{(N-1,N+1)}(z) z^{\b}$ with $N$ large.
Then for $x \in (1,2)$ we have the estimates
$$
U_{N,1}^{\a,\b}f_N(x) \simeq \frac{1}{N^{2\a+2\b}} \int_{N-1}^{N+1}
		\Big( \big[(x+N)^2-z^2\big] \big[z^2-(x-N)^2\big]\Big)^{\a+\b-1/2} z \, dz \simeq 1,
$$
where the last relation follows from Lemma \ref{lem:iest}. Furthermore, still for $x\in (1,2)$,
\begin{align*}
U_{N,2}^{\a,\b}f_N(x) & \simeq \frac{1}{N^{2\a+2\b}} \int_{N-1}^{N+1} \big[ (N-x+z)(N+x-z)\big]^{\a+\b-1/2}
	N^{\a+1/2} N^{\b}\, dz \\
& \simeq \int_{N-1}^{N+1} (N+x-z)^{\a+\b-1/2}\, dz \simeq 1.
\end{align*}
All these bounds hold uniformly in $x$ and $N$.
On the other hand, when $\delta < -\b p$ it is straightforward to check that the norms of $f_N$ in $L^p(x^\delta dx)$
tend to zero as $N \to \infty$. This gives the necessity of (a3).

\noindent \textbf{Part (b).}
Observe that for $x> 0$ and $f \ge 0$ we have the bounds
\begin{align*}
V_{2x,1}^{\a,\b}f(x) & \simeq \frac{1}{x^{2\a+2\b}} \int_0^{x} \big[ 4x^2-(x-z)^2\big]^{-\a-1/2}
	\big[ 4x^2- (x+z)^2\big]^{\a+\b-1/2} z^{2\a+1} f(z)\, dz \\
  & \gtrsim \frac{1}{x^{2\a+2\b}} x^{-2\a-1}\int_0^{x/2} \big[ 4x^2-(x+z)^2\big]^{\a+\b-1/2} z^{2\a+1} f(z)\, dz \\
	& \simeq \frac{1}{x^{2\a+2}} \int_0^{x/2} z^{2\a+1} f(z)\, dz,\\
V_{2x,2}^{\a,\b}f(x) & \gtrsim \frac{1}{x^{2\a+2\b}} \int_0^{x/2} \big[4x^2-(x-z)^2\big]^{\b-1} z^{2\a+1} f(z)\, dz 
	 \simeq \frac{1}{x^{2\a+2}} \int_0^{x/2} z^{2\a+1} f(z)\, dz.
\end{align*}
Take now $f(z) = \chi_{(0,1)}(z)z^{-(\delta+1)\slash p}\slash \log\frac{2}{z}$. As easily verified, this function belongs to
$L^p(x^{\delta}dx)$. Hence, in view of the above estimates, a necessary condition for each of
$V_{*,1}^{\a,\b}$, $V_{*,2}^{\a,\b}$ and $\widetilde{V}_{*,2}^{\a,\b}$ to be well defined on $L^p(x^{\delta}dx)$ is that
the function $z \mapsto z^{2\a+1}z^{-(\delta+1)\slash p}\slash \log\frac{2}z$ is integrable at $0^+$, which is equivalent
to (b1).

Finally, consider $f(z)=\chi_{(0,1)}(z) z^{-\delta}$. Assume that $\delta \le -1$. Then $f \in L^p(x^{\delta}dx)$.
Further, choosing $t=2$, for $x \in (0,1)$ we have
\begin{align*}
V_{2,1}^{\a,\b}f(x) & \simeq \int_0^{2-x} \big[4-(x-z)^2\big]^{-\a-1/2} \big[ 4 - (x+z)^2\big]^{\a+\b-1/2} z^{2\a+1-\delta}
	\chi_{(0,1)}(z)\, dz \\
	& \gtrsim \int_0^{1/2} \big[4-(x-z)^2\big]^{-\a-1/2} \big[ 4 - (x+z)^2\big]^{\a+\b-1/2} z^{2\a+1-\delta}\, dz \\
	& \simeq \int_0^{1/2} z^{2\a+1-\delta}\, dz \simeq 1, \\
V_{2,2}^{\a,\b}f(x) & \simeq \int_0^{2-x} \big[ 4-(x-z)^2\big]^{\b-1} z^{2\a+1-\delta} \chi_{(0,1)}(z)\, dz 
	\simeq \int_0^{1} z^{2\a+1-\delta}\, dz \simeq 1.
\end{align*}
This implies that none of $V_{*,1}^{\a,\b}$, $V_{*,2}^{\a,\b}$ and $\widetilde{V}_{*,2}^{\a,\b}$ maps $L^p(x^{\delta}dx)$
into itself. The necessity of (b2) follows.

The proof of Lemma \ref{lem:counter} is complete.
\end{proof}

\subsection{Proof of Proposition \ref{prop:sharp}} \label{ssec:42}
Let $\a > -1$ and $\a+\b > -1/2$.
For $\varepsilon > 0$ denote
\begin{align*}
E_{\varepsilon} & = \big\{ (t,x,z) \in E : t - |x-z| < \varepsilon \sqrt{xz}\;\;
	\textrm{or} \;\; x+z -t < \varepsilon \sqrt{xz} \big\}, \\
F_{\varepsilon} & = \big\{ (t,x,z) \in F : t - (x+z) < \varepsilon \sqrt{xz}\;\;
	\textrm{or} \;\; t >  \varepsilon^{-1} \sqrt{xz} \big\}.
\end{align*}
From the results of \cite{CNR}, see \cite[Theorem 3.3]{CNR} and the accompanying comments,
it follows that for each pair $\a,\b$ there exists $\varepsilon = \varepsilon(\a,\b) > 0$ such that the kernel
$K_t^{\a,\b}(x,z)$ does not change sign in $E_{\varepsilon}$ and in $F_{\varepsilon}$ and, moreover,
$|K_t^{\a,\b}(x,z)|$ is comparable in $E_{\varepsilon}$ and in $F_{\varepsilon}$ with the corresponding expressions
on the right-hand side of the relevant bound from (1)--(8) in Theorem \ref{thm:kerest}.

Therefore the unboundedness results obtained for $U_{*,1}^{\a,\b}$, $U_{*,2}^{\a,\b}$, $\widetilde{U}_{*,2}^{\a,\b}$,
$V_{*,1}^{\a,\b}$, $V_{*,2}^{\a,\b}$, $\widetilde{V}_{*,2}^{\a,\b}$ in Lemma \ref{lem:counter} will also be valid for
$M_*^{\a,\b}$ (with $\a,\b$ suitably restricted according to the splitting in (1)--(8) in Theorem \ref{thm:kerest}) provided
that we assure that $(t,x,z)$ involved in the counterexamples are located either in $E_{\varepsilon}$ or in $F_{\varepsilon}$.

In the counterexamples from part (a) of the proof of Lemma \ref{lem:counter} one can consider either $x$ large enough (in the first
and the second counterexamples) or $N$ large enough (in the third counterexample) so that all $(t,x,z)$ involved are
contained in $E_{\varepsilon}$. In part (b), in the first counterexample one considers $x$ sufficiently large, while in
the second one $x$ in a sufficiently small neighborhood of $0$, so that all $(t,x,z)$ involved belong to $F_{\varepsilon}$.

We conclude that necessary conditions for $M_{*}^{\a,\b}$ to be well defined and bounded on $L^p(x^{\delta}dx)$ are
(a1), (a2), (a3) of Lemma \ref{lem:counter}, and in case $-\b \notin \mathbb{N}$ also (b1) and (b2).
However, since when $-\b \in \mathbb{N}$ conditions (b1) and (b2) are less restrictive than (a1) and (a3), respectively,
actually all (a1), (a2), (a3), (b1), (b2) are always the necessary conditions.
\qed

\begin{remark} \label{rem:sharp}
The discrepancy between the sufficient conditions in Theorem \ref{thm:main} and the necessary conditions
from Proposition \ref{prop:sharp} is related to region $F''_2$ and the special maximal operator $T_{\eta}$.
We do not know if the restriction of $M^{\a,\b}_*$ to $F''_2$ is always controlled by means of $T_{\eta}$ in an
optimal way. Neither we know whether the description of mapping properties of $T_{\eta}$ from Lemma \ref{lem:T} is sharp.
These issues remain to be investigated.
\end{remark}



\end{document}